\documentclass[11pt,twoside]{amsart}

\usepackage[utf8]  {inputenc}%
\usepackage[T1]      {fontenc }%
\usepackage          {amsmath }%
\usepackage          {amsfonts}%
\usepackage          {amssymb }%
\usepackage          {amsthm  }%
\usepackage          {a4wide  }%
\usepackage          {url     }%
\usepackage          {tikz    }%
\usepackage[bookmarks=false,pdfborder={0 0 0.05}]{hyperref}
\usepackage[all]{xy}
\usepackage{lmodern}
\usepackage{tikz-cd}

\usepackage{enumitem}

\usepackage{booktabs}

\usepackage{amsmath, amsfonts, amssymb, amsthm,  wasysym, graphics, graphicx, xcolor, frcursive,comment,bbm}

\usepackage{etex}

\definecolor{darkblue}{rgb}{0.0,0,0.7} 
\definecolor{darkred}{rgb}{0.7,0,0} 

\usepackage{hyperref}
\usepackage[all]{xy}
\usepackage[T1]{fontenc}

\usepackage{MnSymbol}

\newcommand{\RR}{\mathbb R}
\newcommand{\ZZ}{\mathbb Z}

\DeclareMathOperator{\Red}{Red}
\DeclareMathOperator{\lin}{lin}

\newcommand{\Sa}{\widetilde{S}}

\theoremstyle{plain}
\newtheorem{theorem}{Theorem}[section]

\newtheorem{lemma}[theorem]{Lemma}
\newtheorem{corollary}[theorem]{Corollary}
\theoremstyle{definition}
\newtheorem{definition}[theorem]{Definition}
\newtheorem{example}[theorem]{Example}
\newtheorem{remark}[theorem]{Remark}

\newtheorem*{theoremA}{Theorem A}
\newtheorem*{theoremB}{Theorem B}
\newtheorem*{theoremC}{Theorem C}
\newtheorem*{theoremD}{Theorem D}

\author[P.~Wegener]{Patrick Wegener}
\address{Patrick Wegener, Philipps-Universit\"at Marburg, Germany}
\email{wegenerp@staff.uni-marburg.de}

\title[The Hurwitz Action in the Affine Symmetric Group]{The Hurwitz Action in the Affine Symmetric Group}

\date{\today}

\begin{document}

\begin{abstract}
Let $W$ be an affine Coxeter group of type $\widetilde A_n$, that is, the affine
symmetric group $\Sa_N$ with $N=n+1$, let $T$ be its set of reflections, and let $\Red_T(w)$ be the set of reduced reflection factorizations of an element $w\in W$. The braid group acts on
$\Red_T(w)$ by the Hurwitz action. For finite Coxeter groups it is known exactly when this
action is transitive, namely precisely for the parabolic quasi-Coxeter elements. We address this problem for the affine type $\widetilde A_n$ by determining all orbits of $\Red_T(w)$.
\end{abstract}

\maketitle

\section{Introduction}

A \emph{Coxeter group} $W$ comes with a distinguished generating set $S$ of involutions, the set of \emph{simple refelctions}, and with the larger set
\[
T=\{wsw^{-1}\ :\ w\in W,\ s\in S\}
\]
of all \emph{reflections}. Since $T$ generates $W$, every element $w\in W$ can be written as a product of reflections, and one may ask for the shortest such expression. Its length is the \emph{reflection length} $\ell_T(w)$, and an expression
\[
w=t_1t_2\cdots t_m\quad (t_i\in T,~ m=\ell_T(w))
\]
is called a \emph{reduced reflection factorization} of $w$. We collect all of them in the set $\Red_T(w)$, whose elements we regard as tuples $(t_1,\dots,t_m)$.

There is a natural way of passing from one reduced factorization to another. If $w=t_1\cdots t_m$, then also
\[
w=t_1 \cdots t_{i-1} \, (t_it_{i+1}t_i^{-1})\,t_i\,t_{i+2}\cdots t_m,
\]
So this operation produces a new tuple in $\Red_T(w)$ and more generally defines an action of the braid group $\mathcal{B}_m$ on $\Red_T(w)$, called the \emph{Hurwitz action}. The precise definition is given in Section~\ref{sec:hurwitz}.

\medskip
\noindent \textbf{Question:} For which $w$ does the Hurwitz action have only one orbit? If there is more than one orbit, how many are there?

\medskip
It has been first shown by Bessis \cite{Bes03} for finite Coxeter groups and by Igusa and Schiffler \cite{IS10} for not-necessarily-finite Coxeter groups, that the Hurwitz action for $w$ has precisely one orbit if $w$ is a Coxeter element. Later, the first question has been completely answered for finite Coxeter groups. Call $w$ a \emph{quasi-Coxeter element} if some reduced reflection factorization of $w$ generates the whole group $W$, and call it a \emph{parabolic quasi-Coxeter element} if some reduced reflection factorization generates a parabolic subgroup. By a theorem of Baumeister, Gobet, Roberts and Wegener \cite[Theorem~1.1]{BGRW17}, the Hurwitz action on $\Red_T(w)$ is transitive if and only if $w$ is a parabolic quasi-Coxeter element.

For affine Coxeter groups both questions remain open in the general. Transitivity is
known to hold for parabolic quasi-Coxeter elements \cite{Weg20,WY23}. But in general there are elements for which the Hurwitz action is transitive although the element is not a parabolic quasi-Coxeter element \cite[Ex.~5.7]{HK16}. We address both questions for the affine Coxeter group of type $\widetilde A_n$. We can realize this group as the \emph{affine symmetric group}, that is
\[
\Sa_N=S_N\ltimes Q,
\]
where $N = n+1$ und $Q$ is the \emph{root lattice}
\[
Q=\Big\{\lambda\in\ZZ^N:\ \sum_{i=1}^N\lambda_i=0\Big\}
\]
We regard the elements of $\Sa_N$ as maps $x\mapsto u(x)+\mu$ of $\RR^N$ with $u \in S_N$ a permutation of the coordinates and $\mu =(\mu_1,\dots,\mu_N) \in Q$. Thus we write an element $w\in\Sa_N$ as $w=t_\mu u$, meaning $w(x)=u(x)+\mu$. The reflections of $\Sa_N$ are the
reflections in the affine hyperplanes $x_i-x_j=k$. We denote the reflection in this hyperplane by $s_{\gamma, k}$.

Before stating the results we must introduce some notation. Let $w=t_\mu u \in \Sa_N$, and let $C_1, C_2, \dots, C_c$ be the cycles of the permutation $u$, viewed as subsets of the index set $\{1,\dots,N\}$. Fixed points of $u$ are regarded as cycles of length one, so the $C_t$ partition $\{1,\dots,N\}$ and $c$ is the total number of cycles.

\begin{definition} \label{def:winding}
The \emph{winding number} of $w$ along the cycle $C_t$ is
\[
\omega_t=\omega_t(w)=\sum_{i\in C_t}\mu_i\in\ZZ,
\]
that is, the sum of those coordinates of the translation vector $\mu$ whose indices lie in the
cycle $C_t$. Since the $C_t$ partition $\{1,\dots,N\}$ and $\mu$ has total coordinate sum $0$.
The winding numbers satisfy
\begin{equation} \label{eq:sumzero}
\omega_1+\dots+\omega_c=0.
\end{equation}
\end{definition}

\begin{definition} \label{def:balanced}
A partition $\pi$ of the set of cycles $\{C_1,\dots,C_c\}$ is called \emph{balanced} if for
every part $B\in\pi$ we have $\sum_{C_t\in B}\omega_t=0$. We write $q=q(w)$ for the largest possible number of parts of a balanced partition, and $\mathcal P(w)$ for the set of balanced partitions having exactly $q$ parts. The partitions in $\mathcal P(w)$ are called \emph{maximal balanced partitions}. For a part $B$ we let $\#B$ denote the number of cycles in $B$ and put
\[
g_B=\gcd\{\,\omega_t\ :\ C_t\in B\,\}.
\]
\end{definition}

Note that the partition into a single part is always balanced, so $q\ge1$ and $\mathcal P(w)$ is never empty. The reason balanced partitions appear is as follows: a reduced reflection factorization splits into groups of reflections that do not interact, and each such group must multiply to an element of a smaller affine symmetric group. This forces the corresponding
winding numbers to cancel.

We show the following results about reflection factorizations and in particular about the Hurwitz action in affine symmetric groups. 

\begin{theoremA}[Theorem~\ref{thm:length}]
For every $w = t_{\mu} u \in\Sa_N$ we have
\[
\ell_T(w)=N+c-2q,
\]
where $c$ is the number of cycles of $u$ and $q=q(w)$.
\end{theoremA}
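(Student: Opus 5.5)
We prove $\ell_T(w)=N+c-2q$ by showing both inequalities. Throughout we use the basic invariant of how a single reflection changes the cycle data. A reflection $s_{\gamma,k}$ with $\gamma=e_i-e_j$ acts as $t_{k\gamma}(i\,j)$. Multiplying $w=t_\mu u$ on the right by it gives $w s_{\gamma,k}=t_{\mu+k u(\gamma)}\,u(i\,j)$. There are two cases.

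\emph{Case 1: $i,j$ lie in the same cycle $C_t$.} That cycle splits into two cycles, the other winding numbers are unchanged, and the two new winding numbers sum to $\omega_t$. Their individual values can be prescribed arbitrarily by the choice of $k$, since $k$ shifts one of them by $k$ and the other by $-k$.

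\emph{Case 2: $i,j$ lie in different cycles $C_t,C_s$.} These two cycles merge into one with winding number $\omega_t+\omega_s$.

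We encode the data of $w$ as the multiset of cycles, each with its size and winding number. The identity is $N$ fixed points with winding number $0$, so $c=N$ and $q=N$, and the formula gives $0$. It is a routine check, for both cases and arbitrary $k$, that the quantity $\Phi(w)=N+c-2q$ changes by at most $1$ under one reflection, and by exactly $\pm1$ in the appropriate sense.

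For the lower bound, $\ell_T(w)\ge \Phi(w)$, it suffices that $\Phi(ws)\ge\Phi(w)-1$ for every reflection $s$. In a merge, $c$ drops by one. A balanced partition of the new cycles lifts to one of the old cycles with the same number of parts, by putting $C_t,C_s$ into the part containing their merge. Hence $q$ does not drop, and $\Phi$ decreases by at most $1$. In a split, $c$ rises by one. A balanced partition of the new cycles merges back to one of the old cycles with at least one fewer part, obtained by uniting the parts containing the two pieces, or with the same number of parts if both pieces lie in one part. So $q(ws)\le q(w)+1$, and $\Phi$ changes by $+1-2(q(ws)-q(w))\ge -1$. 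Induction on $\ell_T$ starting at the identity then gives the bound.

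For the upper bound we construct explicit factorizations. Take a maximal balanced partition $\pi\in\mathcal P(w)$. For each part $B$ with $m_B=\sum_{C_t\in B}|C_t|$ points and $\#B$ cycles, we reach the identity on those coordinates in $m_B+\#B-2$ steps:
\begin{enumerate}
\item Merge the $\#B$ cycles into one cycle using $\#B-1$ reflections. Its winding number is $0$ because $\pi$ is balanced.
\item A single cycle of length $m$ with winding number $0$ is an element of reflection length $m-1$: split it repeatedly with $m-1$ reflections, choosing $k$ so that every piece keeps winding number $0$. This is possible by Case~1, ending at fixed points with winding number $0$, that is, at the identity.
\end{enumerate}
Summing over the $q$ parts gives $\sum_B(m_B+\#B-2)=N+c-2q$.

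The main obstacle is the lower-bound step, specifically the split case. There one must verify carefully that $q$ increases by at most one, and that when it does increase the two pieces must land in different parts of the new maximal balanced partition. I would check this by the explicit merging-back argument above.
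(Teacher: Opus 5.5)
Your proof is correct, and it differs from the paper's mainly in the lower bound.

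\textbf{Lower bound.} The paper takes a reduced factorization and splits it along the connected components of its direction graph (Lemma~\ref{lem:blocks}). It notes that the induced partition of the cycles is balanced, and then applies Hurwitz's bound $k\ge M+c(u)-2$ for transitive transposition factorizations (Theorem~\ref{thm:genus}) to the linear parts, block by block. You instead track the potential $\Phi=N+c-2q$. You show directly from the split/merge description of $w\mapsto ws$ that $\Phi$ changes by $\pm1$, starting from $\Phi(1)=0$.

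Your route is self-contained and elementary: it needs neither Hurwitz's theorem nor the lemma that a connected set of transpositions generates the symmetric group. The paper's route reuses block machinery it needs anyway. As a by-product it shows that the block partition of every reduced factorization lies in $\mathcal P(w)$, which is used again in Theorem~\ref{thm:count}.

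\textbf{Upper bound.} The paper builds an explicit star--connector factorization: tree factorizations of the cycles with levels solved via Lemma~\ref{lem:levels}, followed by connector pairs $(s_{\gamma_t,d_t},s_{\gamma_t,0})$. It does this because that factorization becomes the normal form $F(y)$ of Section~\ref{ch:main}. Your greedy merge-then-split reduction to the identity is simpler and avoids the lattice-basis lemmas, but it does not produce that normal form.

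\textbf{Two wording slips to fix.}
\begin{itemize}
\item In the merge case, your lifting argument proves $q(ws)\le q(w)$, i.e.\ $q$ does not \emph{increase}. The phrase ``$q$ does not drop'' is false (take $w=1$), and it would give the wrong sign for the change of $\Phi$.
\item In the split case, the merged-back partition has exactly one fewer part, not ``at least one fewer''.
\end{itemize}
With these corrections, $\Phi(ws)\ge\Phi(w)-1$ holds for every reflection $s$, and the induction on $\ell_T$ (using $w=(ws)s$) goes through.
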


Note that this result has already been obtained by Lewis, McCammond, Petersen and Schwer \cite[Thm.~4.25]{LMPS19}. We nevertheless provide an independent proof here. In particular, the proof of the upper bound constructs an explicit reduced factorization which is essential for the rest of this paper.

\begin{theoremB}[Theorem~\ref{thm:main}]
Let $w\in\Sa_N$ and let $F,F'\in\Red_T(w)$ be two reduced reflection factorizations. Denote by
$\langle F\rangle$ the subgroup of $\Sa_N$ generated by the entries of $F$. Then
\[
F\ \text{and}\ F'\ \text{lie in the same Hurwitz orbit}
\qquad\Longleftrightarrow\qquad
\langle F\rangle=\langle F'\rangle .
\]
\end{theoremB}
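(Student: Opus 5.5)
The direction ``$\Rightarrow$'' is immediate: a Hurwitz move replaces $(t_i,t_{i+1})$ by $(t_it_{i+1}t_i,\,t_i)$, which generates the same subgroup. Hence $\langle F\rangle$ is an invariant of the orbit. All the work lies in ``$\Leftarrow$''. I would show that every $F\in\Red_T(w)$ can be moved by Hurwitz moves to a normal form that depends only on $\langle F\rangle$. The natural candidate is the explicit factorization built in the proof of the upper bound of Theorem~A.

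\textbf{Step 1: decomposition by a maximal balanced partition.} For $F=(t_1,\dots,t_m)$, consider the graph on $\{1,\dots,N\}$ with an edge $\{i,j\}$ for each $t_k=s_{\gamma,k'}$ with $\gamma=e_i-e_j$. I would show, as in the lower-bound argument of Theorem~A, that reducedness ($m=N+c-2q$) forces the following. The connected components give a partition of the cycles of $u$ that is balanced, and in fact lies in $\mathcal P(w)$ with exactly $q$ parts. The support of each part $B$ is a union of cycles, the reflections supported on $B$ multiply to the restriction $w_B$, and there are exactly $\sum_{C\in B}|C|+\#B-2$ of them. Reflections with disjoint supports commute, so Hurwitz moves (which realize such commutations, up to conjugation that is trivial here) sort $F$ into consecutive blocks $F_B\in\Red_T(w_B)$. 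Moreover $\langle F\rangle=\prod_B\langle F_B\rangle$, and this group determines the partition $\pi$ as its orbit-support decomposition on coordinates. So equal subgroups give the same $\pi$, and it suffices to treat one block. In other words, we may assume $q=1$: $w$ has no proper balanced partition and the reflections connect everything.

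\textbf{Step 2: the case $q=1$.} Now $\ell_T(w)=N+c-2$. I would identify $\langle F\rangle$ explicitly: its linear part is $S_N$, and its translation part is a sublattice of $Q$. Since the winding numbers are the obstruction, I expect this sublattice to be $g\,Q$ or a lattice determined by $g=g_B$ and the choice of ``lift'' data. The goal is then to show that $F$ can be Hurwitz-moved to a canonical shape. First, use Hurwitz moves to bring the factorization into the form (factorization merging the $c$ cycles into one) followed by (a reduced factorization of the resulting single-cycle element). Concretely, repeatedly pull a reflection joining two different cycles to the front. Second, for a single $N$-cycle with translation, $w$ is a quasi-Coxeter-like element of the parabolic or reflection subgroup $\langle F\rangle$. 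Here I would invoke the known transitivity results for (parabolic) quasi-Coxeter elements in affine groups \cite{Weg20,WY23}, applied inside the reflection subgroup $\langle F\rangle$. This subgroup is itself a Coxeter group of type $\widetilde A$, or a product of such, up to isomorphism, so the result applies after identifying its reflections with $T\cap\langle F\rangle$. One has to check that $\ell_T$ and $\ell_{T\cap\langle F\rangle}$ agree on $w$, which follows from Theorem~A applied in both groups.

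\textbf{Main obstacle.} The hardest step is the structural claim in Step 2: that each block subgroup $\langle F_B\rangle$ is a reflection subgroup of type $\widetilde A$ in which $w_B$ is (parabolic) quasi-Coxeter, and that the Hurwitz orbits of $\Red_T(w_B)$ correspond bijectively to the possible such subgroups. First I would classify the reflection subgroups of $\Sa_N$ containing an $N$-cycle element with translation. I expect these to be conjugates of $\{t_\lambda v : \lambda\in gQ+\text{shift}\}$, parametrized by $g\mid g_B$ and a residue class. Then I would show by induction on $N$ that Hurwitz moves can normalize the translation parts $k$ of the reflections $s_{\gamma,k}$ modulo this lattice. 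Concretely, sliding a reflection past another changes $k$ by multiples of the other's shift, which is a Euclidean-algorithm style reduction.
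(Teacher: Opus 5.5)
Your Step~2 does contain a correct proof of Theorem~B, by a route genuinely different from the paper's. However, it is surrounded by steps that are either superfluous or, read literally, incomplete.

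\textbf{The argument that works.} Apply transitivity \emph{inside} $W'=\langle F\rangle=\langle F'\rangle$, and put $T'=T\cap W'$.
\begin{itemize}
\item Both $F$ and $F'$ have entries in $T'$ and length $\ell_T(w)\le\ell_{T'}(w)$, so both lie in $\Red_{T'}(w)$. This is immediate and does not need Theorem~A.
\item Since $F$ generates $W'$, $w$ is quasi-Coxeter in $W'$.
\item After your block reduction (which agrees with Lemmas~\ref{lem:blocks} and~\ref{lem:blocksgroup}) the direction graph is connected. Hence $\lin(W')=S_N$, and Theorem~\ref{thm:structure} gives $W'=t_\nu\Gamma_e t_\nu^{-1}$.
\item If $c\ge2$, the winding numbers are nonzero, so $w$ has infinite order and $e\ge1$. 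Then $t_\mu u\mapsto t_\nu t_{e\mu}u\,t_\nu^{-1}$ is an isomorphism $\Sa_N\to W'$ with $s_{\gamma,k}\mapsto s_{\gamma,ek+(\nu|\gamma)}$. It therefore carries $T$ onto $T'$.
\item If $c=1$, the $N-1$ mirrors meet, so $e=0$ and $W'\cong S_N$ with $w$ a Coxeter element.
\end{itemize}
Transport the transitivity theorem for quasi-Coxeter elements along this isomorphism: the affine results cited from \cite{Weg20,WY23} when $c\ge2$, the classical result for $S_N$ when $c=1$. This gives $F\sim F'$.

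So the ``main obstacle'' you name is exactly Theorem~\ref{thm:structure}. Note that it yields type $\widetilde A_{N-1}$ or $A_{N-1}$, not ``$\widetilde A$ or a product of such''. The explicit isomorphism also identifies $T'$ with the reflections directly, so no general theory of reflection subgroups is needed.

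\textbf{What should be dropped.} Pulling $c-1$ cycle-joining reflections to the front is possible, but it does not help. The remaining $N-1$ entries then factor a finite-order element whose linear part is an $N$-cycle. That element is a Coxeter element of a point stabiliser $\cong S_N$, so those entries form one orbit for trivial reasons. All orbit-distinguishing information (the residues $y\bmod g$ in the paper's normal form) sits in the prefix, which your plan never controls.
\begin{itemize}
\item If you meant to apply transitivity only to this remainder, that is a genuine gap.
\item If you apply it to $w$ in $W'$ as above, the merging step is superfluous.
\end{itemize}
Likewise, your last paragraph proposes to classify subgroups containing an $N$-cycle element and to normalise levels ``Euclidean style'' by induction on $N$. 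This is unnecessary for Theorem~B, and as a stand-alone argument it has no concrete mechanism. The paper's mechanism has three specific parts: the star--connector form obtained from Clebsch--Hurwitz, the connector moves generating the shift lattice $g\ZZ^{\{2,\dots,c\}}$, and the invariant $e=g$ that recovers $y\bmod g$ from $\langle F(y)\rangle$.

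\textbf{Comparison.} Your route proves Theorem~B in a few lines. It also shows that the ``if'' direction is a formal consequence of quasi-Coxeter transitivity in reflection subgroups. But it rests entirely on that external affine theorem.

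The paper's proof is self-contained, importing only Clebsch--Hurwitz for $S_N$. It produces explicit orbit representatives $F(y)$ with $y\in(\ZZ/g\ZZ)^{\{2,\dots,c\}}$, and this is what yields the count $g^{c-1}$ and hence Theorems~C and~D. Your route gives no orbit count unless you separately determine which subgroups $\langle F\rangle$ occur. Doing so amounts to redoing the paper's normal form and its Step~2.
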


The ``\emph{only if}'' implication holds in any Coxeter group. The corresponding result (plus some extra condition which is not relevant in type $A$) for finite Coxeter groups has been conjectured by Lewis and proven by Douvropoulos and Lewis for parabolic quasi-Coxeter elements \cite{DL24}. 

\begin{theoremC}[Theorem~\ref{thm:count}]
For every $w\in\Sa_N$ the number of Hurwitz orbits on $\Red_T(w)$ is given by
\[
\sum_{\pi\in\mathcal P(w)}\ \prod_{B\in\pi}g_B^{\,\#B-1},
\]
where a part $B$ with $\#B=1$ contributes the factor $1$.
\end{theoremC}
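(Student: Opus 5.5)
By Theorem~B, counting Hurwitz orbits on $\Red_T(w)$ is the same as counting the distinct subgroups $\langle F\rangle$ with $F\in\Red_T(w)$. The plan is to classify these subgroups in two stages. First, for each $F$ I attach a maximal balanced partition $\pi(F)\in\mathcal P(w)$. Second, for fixed $\pi$ I show that the subgroups with $\pi(F)=\pi$ split as a product over the parts $B\in\pi$, and that there are exactly $g_B^{\#B-1}$ possibilities for the factor attached to $B$.

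\textbf{Step 1: the partition $\pi(F)$.} Given $F=(t_1,\dots,t_m)$, consider the graph on $\{1,\dots,N\}$ with an edge $\{i,j\}$ for each $t_k=s_{\gamma,k'}$ with $\gamma=e_i-e_j$. Its connected components are unions of cycles of $u$, so they define a partition of $\{C_1,\dots,C_c\}$. On each component $X$ the reflections generate a subgroup of $\Sa_X\times\cdots$, so the product restricted to $X$ lies in a smaller affine symmetric group. This forces the winding numbers of the cycles in $X$ to sum to $0$, i.e.\ the partition is balanced. Next apply Theorem~A to each component separately. A component with $\#X$ indices, $c_X$ cycles and $q_X$ balanced parts needs at least $\#X+c_X-2q_X$ reflections, with $q_X\ge 1$. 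Summing these bounds and comparing with $\ell_T(w)=N+c-2q$ shows two things: the number of components is $q$, and each component has $q_X=1$, i.e.\ no proper balanced subpartition. Hence $\pi(F)\in\mathcal P(w)$. Since $\langle F\rangle$ determines the components (they are the orbits of its linear part on $\{1,\dots,N\}$), $\pi(F)$ is an invariant of $\langle F\rangle$. Moreover,
\[
\langle F\rangle=\prod_{B\in\pi(F)}\langle F_B\rangle ,
\]
where $F_B$ consists of the reflections in component $B$. Using the Hurwitz action, $F$ can be sorted into these blocks, and each $F_B$ is a reduced factorization of the restriction $w_B$. This reduces the problem to the case $q=1$, $\pi=\{B\}$, where we must show that exactly $g_B^{\#B-1}$ subgroups occur. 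Surjectivity onto $\mathcal P(w)$ comes from the explicit factorization constructed in the proof of Theorem~\ref{thm:length}, applied part by part.

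\textbf{Step 2: a single part.} Assume $q=1$. The linear parts of the $t_k$ generate the full $S_N$, since the graph is connected. A reflection subgroup of $\Sa_N$ with transitive linear part is, after conjugating by a translation, of the form
\[
\{\,t_\lambda v:\ v\in S_N,\ \lambda\in L\,\},
\]
with $L=dQ$ shifted by a coset datum. Concretely, it is determined by an integer $d\ge 1$ and a vector $a\in(\ZZ/d)^N$ modulo the diagonal: $s_{e_i-e_j,k}$ lies in the subgroup iff $k\equiv a_i-a_j \pmod d$.

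Constraints on $(d,a)$. For $w$ to lie in the subgroup, two conditions are needed. First, $d$ must divide each $\omega_t$. Second, $a$ must be constant on each cycle up to shifts determined by $\mu$: along a cycle, $a$ is forced once one value is chosen, and the cycle closes up consistently exactly because $d\mid\omega_t$. So the free data is one value of $a$ per cycle, modulo the diagonal, giving $d^{c-1}$ choices. The reduced length $N+c-2$ then forces $d=g_B$ exactly. Indeed, if $d$ were a proper divisor, the subgroup generated by the factorization would be strictly larger, and a length/rank count would fail; here I would compare with the quasi-Coxeter situation inside the reflection subgroup $\cong\Sa_N$. Conversely, for each of the $g^{c-1}$ choices of $a$ I would adapt the explicit construction of Theorem~\ref{thm:length}, twisting the connecting reflections between cycles by the chosen offsets, to exhibit a reduced factorization generating exactly that subgroup. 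Multiplying over parts and summing over $\pi$ gives the formula.

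\textbf{Main obstacle.} The key difficulty is showing that a reduced factorization always generates a subgroup with modulus exactly $d=g_B$, not a proper divisor of $g_B$. I would attack this by induction on $c$, peeling off one reflection that joins two cycles. Such a reflection merges two cycles with winding numbers $\omega,\omega'$ into one with winding number $\omega+\omega'$. The induction should track the gcd through this merge, arguing that the offset between the two merged cycles is recorded modulo the gcd of the remaining winding numbers.
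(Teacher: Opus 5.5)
Your framework is correct as far as it goes. By Theorem~B the orbits correspond to the subgroups $\langle F\rangle$. Your Step~1 is essentially the paper's own assembly via Lemma~\ref{lem:blocks}, with one slip: Theorem~A applied to each component only gives $\sum_X q_X=q$. To conclude $r=q$ and $q_X=1$ you need the bound $|F_X|\ge \#X+c_X-2$ for a connected direction graph from Theorem~\ref{thm:genus}, as in the paper. Your count of conjugates is also right: for fixed $d$, a conjugate $t_\nu\Gamma_d t_\nu^{-1}$ ($\nu\in\ZZ^N$) containing $w$ exists only when $d\mid g$, and then there are exactly $d^{c-1}$ of them.

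The genuine gap is the step you flag yourself: that every $F\in\Red_T(w)$ generates one with $d=g$. The divisibility $d\mid g$ is automatic from $w\in\langle F\rangle$; the real content is $g\mid d$. Your converse needs it too, since if $\langle F(y)\rangle$ had modulus a proper divisor $d$ of $g$, it would only remember $y \bmod d$.

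The justification you sketch cannot work. Conjugating by $t_{-\nu}$ and rescaling $t_\mu u\mapsto t_{\mu/d}u$ identifies $t_\nu\Gamma_d t_\nu^{-1}$ with $\Sa_N$, carries its reflections onto $T$, and divides every winding number by $d$. This changes no balanced partition, so $w$ has reflection length $N+c-2$ inside $\langle F\rangle$ as well, and no length or rank comparison can distinguish $d$ from $g$. What must be excluded is that a reduced factorization of an irreducible element whose winding numbers have gcd $g/d>1$ generates the whole group, and that is not a counting statement.

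The paper obtains $g\mid d$ by first moving $F$, without changing $\langle F\rangle$, into star--connector form $F(y)$ via the Clebsch--Hurwitz theorem applied to $\lin(F)$. There the $N-1$ distinct directions form a $\ZZ$-basis of $Q$, and the only repeated directions are connector pairs whose levels differ by $d_t\equiv 0\pmod g$. Hence all entries lie in $T_\psi$, and $\langle T_\psi\rangle$ is a conjugate of $\Gamma_g$.

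Your induction on $c$ is the right idea and can replace this, but its content is missing. You would need the following steps.
\begin{enumerate}
\item By connectedness some entry $t=s_{\gamma,k}$ with $\gamma=e_i-e_j$ joins two cycles, $i\in C_a$ and $j\in C_b$. A block move makes it the last entry without changing $\langle F\rangle$.
\item The remaining entries form a reduced factorization of $wt=t_{\mu'}(u\tau)$, $\tau=(i\,j)$. This element has $c-1$ cycles, is irreducible by Theorem~A, and has winding numbers $\omega_a+\omega_b$ and $\omega_s$ ($s\ne a,b$), with gcd $g'$.
\item By induction these entries generate $t_{\nu'}\Gamma_{g'}t_{\nu'}^{-1}$. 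The base case $c=1$ holds because the $N-1$ reflecting hyperplanes have independent normals, so $\langle F\rangle$ fixes a point and $d=0=g$.
\item Adjoining $t$ gives exactly $t_{\nu'}\Gamma_h t_{\nu'}^{-1}$ with $h=\gcd(g',\delta)$, where $\delta=k-(\nu'\mid\gamma)$.
\item Summing the congruence $\mu'\equiv(1-u\tau)\nu'\pmod{g'}$, which expresses $wt\in t_{\nu'}\Gamma_{g'}t_{\nu'}^{-1}$, over $C_a$ gives $\delta\equiv-\omega_a\pmod{g'}$. Hence $h=\gcd(g',\omega_a)=g$.
\end{enumerate}
With this established, $s_{\gamma_t,y_t}\in\langle F(y)\rangle$ shows that the $g^{c-1}$ residues of $y$ give distinct subgroups, which settles your converse. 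Without it, the formula is not proved.
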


\begin{theoremD}[Corollary~\ref{cor:criterion}]
The Hurwitz action on $\Red_T(w)$ is transitive if and only if the following two conditions hold:
\begin{enumerate}[label=\textup{(\alph*)}]
\item there is exactly one maximal balanced partition, i.e.\ $\#\mathcal P(w)=1$; and
\item $g_B=1$ for every part $B$ of that partition with $\#B\ge2$.
\end{enumerate}
\end{theoremD}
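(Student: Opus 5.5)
Theorem~D follows from Theorem~C by counting. By Theorem~C the number of Hurwitz orbits on $\Red_T(w)$ is
\[
\sum_{\pi\in\mathcal P(w)}\ \prod_{B\in\pi}g_B^{\,\#B-1}.
\]
Transitivity means exactly one orbit, since $\Red_T(w)\neq\emptyset$ (every $w$ has a reduced factorization). So we must decide when this sum equals $1$.

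First we check that every summand is a positive integer. Each $g_B$ is a gcd of integers, and we adopt the convention $\gcd$ of all zeros $=0$. If $\#B=1$, the factor is $g_B^0=1$ by the convention stated in Theorem~C. If $\#B\ge 2$, the factor $g_B^{\#B-1}$ could a priori vanish, namely when all $\omega_t$ with $C_t\in B$ are $0$. This cannot happen in a \emph{maximal} balanced partition. If all winding numbers in $B$ were $0$, we could split $B$ into singletons, each of which is balanced. That would give a balanced partition with $q+\#B-1>q$ parts, contradicting the definition of $q$. Hence in every $\pi\in\mathcal P(w)$, each part $B$ with $\#B\ge 2$ contains some $\omega_t\neq 0$. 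Therefore $g_B\ge 1$, and every product $\prod_{B\in\pi}g_B^{\#B-1}$ is a positive integer.

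Now the count is argued as follows. The set $\mathcal P(w)$ is nonempty, as noted after Definition~\ref{def:balanced}, so the sum is a nonempty sum of positive integers. It equals $1$ exactly when there is a single summand and that summand equals $1$. That is, $\#\mathcal P(w)=1$, and for its unique partition $\pi$ we have $\prod_{B\in\pi}g_B^{\#B-1}=1$. A product of positive integers is $1$ iff each factor is $1$. Factors with $\#B=1$ are always $1$. For $\#B\ge2$, we have $g_B^{\#B-1}=1$ with exponent $\ge1$ iff $g_B=1$. This gives conditions (a) and (b).

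The only delicate point is the positivity of the summands, i.e.\ excluding $g_B=0$. This is handled by the maximality argument above: a part of size at least two with all winding numbers zero could be refined into more balanced parts. Everything else is immediate from Theorem~C.
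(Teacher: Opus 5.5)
Your proposal is correct and follows the paper's proof. You read off transitivity from the orbit count of Theorem~C, noting that every factor $g_B^{\#B-1}$ is a positive integer. Your refinement argument for $g_B\ge1$ proves a slightly weaker form of Lemma~\ref{lem:nonzero}, which the paper cites at that step, and this weaker form suffices.
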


\medskip
We want to emphasise that Lewis and Wang \cite{LW22} have obtained very similar results for complex reflection groups and the approach in this paper is strongly influenced by the approach in \cite{LW22}.

\medskip
\textbf{Outline of the paper.} Section~\ref{ch:prelim} collects the necessary background on Coxeter groups, reflection length, the Hurwitz action, and a detailed description of the affine symmetric group and its reflections. Section~\ref{ch:winding} develops winding numbers and balanced partitions and proves the block decomposition of a reduced factorization. In Section~\ref{ch:sub} we investigate those reflection subgroups of $\Sa_N$ which are important for us. In Section~\ref{ch:length} we prove Theorem~A. Section~\ref{ch:main} contains the normal form and the proofs of Theorems~B,~C and~D. 

\medskip
\textbf{Disclosure of AI tools.} Generative AI tools, namely Claude Opus 5, were used during the exploratory and preparatory stages of this work. The author directed the mathematical strategy and proofs. Opus 5 assisted in several aspects like the formulation of auxiliary results and the elaboration of technical details, as well as consistency checks, and \LaTeX{} preparation. All AI-generated suggestions were independently reviewed and verified by the author, who assume full responsibility for the mathematical arguments, results, and final content of the paper.

\section{Preliminaries}\label{ch:prelim}

This section fixes notation and recalls the notions we need.

\subsection{Coxeter systems and reflection length}

\begin{definition}
A \emph{Coxeter system} is a pair $(W,S)$ consisting of a group $W$ and a finite generating set
$S\subseteq W$ such that $W$ has the presentation
\[
W=\big\langle\, S \ \big|\ (st)^{m(s,t)}=1 \ \text{for all}\ s,t\in S\ \text{with}\ m(s,t)<\infty \,\big\rangle,
\]
where $m(s,s)=1$ and $m(s,t)=m(t,s)\in\{2,3,\dots,\infty\}$ for $s\ne t$. The group $W$ is a
\emph{Coxeter group}, and $|S|$ is its \emph{rank}. The set of \emph{reflections} of $(W,S)$ is
$T=\{wsw^{-1}: w\in W,\ s\in S\}$.
\end{definition}

Standard references are Bourbaki \cite{Bou68} and Humphreys \cite{Hum90}. Note that $T$ is by
construction closed under conjugation, and that $S\subseteq T$, so $T$ generates $W$. Recall that for $w \in W$ we set
\[
\Red_T(w)=\{(t_1,\dots,t_m)\in T^m\ :\ t_1\cdots t_m=w,\ m=\ell_T(w)\}.
\]

\begin{example}\label{ex:S3}
Let $W=S_3$ with $S=\{(1\,2),(2\,3)\}$. Then $T=\{(1\,2),(1\,3),(2\,3)\}$ consists of all three
transpositions. The element $w=(1\,3\,2)$ is not a transposition, but is a product of two,
so $\ell_T(w)=2$, and one checks directly that
\[
\Red_T(w)=\big\{\,\big((1\,2),(1\,3)\big),\ \big((1\,3),(2\,3)\big),\ \big((2\,3),(1\,2)\big)\,\big\}.
\]
\end{example}

\subsection{The Hurwitz action}\label{sec:hurwitz}

\begin{definition}
Let $\mathcal{B}_m$ be the braid group on $m$ strands, with standard generators
$\sigma_1,\dots,\sigma_{m-1}$. The \emph{Hurwitz action} of $\mathcal{B}_m$ on $\Red_T(w)$, where
$m=\ell_T(w)$, is defined on generators by
\[
\sigma_i\cdot(t_1,\dots,t_m)=(t_1,\dots,t_{i-1},\ t_it_{i+1}t_i^{-1},\ t_i,\ t_{i+2},\dots,t_m),
\]
\[
\sigma_i^{-1}\cdot(t_1,\dots,t_m)=(t_1,\dots,t_{i-1},\ t_{i+1},\ t_{i+1}^{-1}t_it_{i+1},\ t_{i+2},\dots,t_m).
\]
For $F , F' \in \Red_T(w)$ we write $F\sim F'$ if $F$ and $F'$ lie in the same orbit, and call them \emph{Hurwitz equivalent}. When applying a braid $\sigma$ to $F \in \Red_T(w)$, we will often simply refer to this as a \emph{Hurwitz move}. Note that the action of $\mathcal{B}_m$ on $\Red_T(w)$ extends to an action on $T^m$. For $F=(t_1,\dots,t_m)$ we write $\langle F\rangle=\langle t_1,\dots,t_m\rangle\le W$.
\end{definition}

That the braid relations are satisfied is a straightforward computation (see \cite[\S1]{BGRW17}). 

\begin{lemma}\label{lem:subgroupinv}
If $F\sim F'$ then $\langle F\rangle=\langle F'\rangle$.
\end{lemma}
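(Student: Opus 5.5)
Since the braid group $\mathcal{B}_m$ is generated by $\sigma_1,\dots,\sigma_{m-1}$, and every element of $\mathcal{B}_m$ is a finite word in these generators and their inverses, it suffices to show that a single application of $\sigma_i$ or of $\sigma_i^{-1}$ does not change the generated subgroup. An induction on the length of the braid word then gives $\langle F\rangle=\langle \sigma\cdot F\rangle$ for every braid $\sigma$, and hence $\langle F\rangle=\langle F'\rangle$ whenever $F\sim F'$.

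So fix $F=(t_1,\dots,t_m)$ and set $F'=\sigma_i\cdot F$. The entries of $F'$ agree with those of $F$ except in positions $i$ and $i+1$, where they are $t_it_{i+1}t_i^{-1}$ and $t_i$. We show the two inclusions separately.
\begin{itemize}
\item The element $t_it_{i+1}t_i^{-1}$ is a product of elements of $F$, so every entry of $F'$ lies in $\langle F\rangle$. Hence $\langle F'\rangle\le\langle F\rangle$.
\item Conversely, $t_i\in\langle F'\rangle$, and
\[
t_{i+1}=t_i^{-1}\,(t_it_{i+1}t_i^{-1})\,t_i\in\langle F'\rangle .
\]
Hence $\langle F\rangle\le\langle F'\rangle$.
\end{itemize}

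The case $\sigma_i^{-1}$ is symmetric: the new entries are $t_{i+1}$ and $t_{i+1}^{-1}t_it_{i+1}$, and we recover $t_i=t_{i+1}(t_{i+1}^{-1}t_it_{i+1})t_{i+1}^{-1}$. Alternatively, $\sigma_i^{-1}$ is the inverse of the action of $\sigma_i$, so equality for $\sigma_i$ already gives equality for $\sigma_i^{-1}$.

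There is no real obstacle here. The only point to note is that the argument never uses reducedness or the fact that the $t_j$ are reflections. It works for the Hurwitz action on arbitrary tuples in $W^m$ (in particular on $T^m$), which is why the statement holds in any Coxeter group, as remarked after Theorem~B.
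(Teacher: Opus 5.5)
Your proof is correct. The paper states Lemma~\ref{lem:subgroupinv} without proof, as an immediate observation, and your reduction to the generators $\sigma_i^{\pm1}$ with both inclusions checked is exactly the standard argument it relies on. You are also right that reducedness and the reflection property play no role, which matches the paper's remark that the ``only if'' direction of Theorem~B holds in any Coxeter group.
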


So the subgroup $\langle F\rangle$ is an invariant of the Hurwitz orbit. two factorizations generating different subgroups cannot be Hurwitz equivalent. Theorem~B will say that in $\Sa_N$ this single invariant already distinguishes all orbits.

\begin{example}
Continuing Example~\ref{ex:S3}, apply $\sigma_1$ to $F=\big((1\,2),(1\,3)\big)$:
\[
\sigma_1\cdot F=\big((1\,2)(1\,3)(1\,2),\ (1\,2)\big)=\big((2\,3),(1\,2)\big),
\]
and applying $\sigma_1$ once more gives $\big((1\,3),(2\,3)\big)$. So all three elements of
$\Red_T\big((1\,3\,2)\big)$ lie in one orbit.
\end{example}

\begin{remark} \label{rem:HuwritzBlocks}
A useful reformulation of the moves, which we shall use in Section~\ref{ch:main}, is that whole consecutive blocks may be moved past each other. If $F=(A,B)$ is the concatenation of two consecutive subtuples $A$ and $B$ with products $a$ and $b$, then a suitable product of the
$\sigma_i$ transforms $F$ into
\begin{equation}\label{eq:blockmove}
\big(\,aBa^{-1},\ A\,\big),
\end{equation}
where $aBa^{-1}$ means the tuple $B$ with every entry conjugated by $a$. Indeed, moving the first entry of $A$ through all of $B$ conjugates each entry of $B$ by it and places it behind $B$; repeating for the remaining entries of $A$, in order, yields \eqref{eq:blockmove}.
\end{remark}

\subsection{The affine symmetric group}\label{sec:affsym}
Let $n\ge1$ and put $N=n+1$. Let $e_1,\dots,e_N$ be the
standard basis of $\RR^N$ with the standard scalar product $(\cdot \mid \cdot)$, so
$(e_i \mid e_j)=\delta_{ij}$.

\begin{definition}
The \emph{root system of type $A_{N-1}$} is
\[
\Phi=\{e_i-e_j\ :\ 1\le i,j\le N,\ i\ne j\},
\]
and its \emph{root lattice} is
\[
Q=\ZZ\Phi=\Big\{\lambda\in\ZZ^N\ :\ \sum_{i=1}^N\lambda_i=0\Big\}.
\]
\end{definition}

\begin{definition}
The \emph{affine symmetric group} is the group of affine transformations of $\RR^N$
\[
\Sa_N=\{\,x\mapsto u(x)+\mu\ :\ u\in S_N,\ \mu\in Q\,\}=S_N\ltimes Q,
\]
where $S_N$ acts on $\RR^N$ by permuting coordinates, $u(e_i)=e_{u(i)}$. We write $t_\mu$ for
the translation $x\mapsto x+\mu$ and $w=t_\mu u$ for the element $x\mapsto u(x)+\mu$; thus
$\mu\in Q$ is the \emph{translation part} and $u$ the \emph{linear part} of $w$. The map
\[
\lin:\Sa_N\longrightarrow S_N,\qquad t_\mu u\mapsto u,
\]
is a surjective group homomorphism whose kernel is the \emph{translation subgroup}
\[
\mathcal T:=\{t_\mu\ :\ \mu\in Q\}\ \le\ \Sa_N .
\]
\end{definition}

We have an isomorphism of abelian groups
\[
\iota:Q\ \xrightarrow{\ \sim\ }\ \mathcal T,\qquad \iota(\mu)=t_\mu ,
\]
which satisfies $\iota(\mu+\nu)=t_{\mu+\nu}=t_\mu t_\nu=\iota(\mu)\iota(\nu)$.
Being the kernel of $\lin$, the subgroup $\mathcal T$ is normal in $\Sa_N$; and it is abelian, so
translations commute with one another.

The decomposition $w=t_\mu u$ is unique: $u$ is the linear part of the affine map $w$ and
$\mu=w(0)$. We write $\mu=(\mu_1,\dots,\mu_N)$ for the coordinates of $\mu$ in the standard
basis. By definition of $Q$ they satisfy $\sum_i\mu_i=0$. Multiplication is given by
\[
(t_\mu u)(t_\nu v)=t_{\mu+u(\nu)}\,(uv).
\]

\begin{remark}
The group $\Sa_N$ is the affine Coxeter group of type $\widetilde A_{N-1}$: it is a Coxeter group of rank $N$ with Coxeter generators the reflections in the walls of a fundamental alcove, see \cite[Chapter~4]{Hum90}. We shall not need the explicit generating set $S$, only the reflection set $T$, which we determine next.
\end{remark}

\subsection{Reflections, directions and levels}

\begin{definition}
For a root $\alpha\in\Phi$ and $k\in\ZZ$ let $H_{\alpha,k}=\{v\in\RR^N\ :\ (v \mid \alpha)=k\}$ be an affine hyperplane, and let $s_{\alpha,k}$ be the orthogonal reflection in $H_{\alpha,k}$, that is
\[
s_{\alpha,k}(v)=v-\big((v\mid\alpha)-k\big)\,\alpha .
\]
We call $\alpha$ the \emph{direction} and $k$ the \emph{level} of $s_{\alpha,k}$. In particular, we put
\[
s_{\alpha}(v) :=  s_{\alpha,0}(v) = v -(v \mid \alpha) \alpha.
\]
\end{definition}

The reflection $s_{\alpha,k}$ fixes $H_{\alpha,k}$ pointwise and is an involution. Since $s_{\alpha,k}=s_{-\alpha,-k}$, whenever we speak of ``the direction'' we mean it up to sign, or we fix one of the two roots.

\begin{lemma}[{\cite[Chapter~4.1]{Hum90}}] \label{lem:reflset}
The set of reflections of the Coxeter system $\Sa_N$ is
\[
T=\{s_{\alpha,k}\ :\ \alpha\in\Phi,\ k\in\ZZ\},
\]
and $s_{\alpha,k}=t_{k\alpha}s_\alpha$ where $s_\alpha=s_{\alpha,0}$. For $\alpha=e_i-e_j$ the
linear part is the transposition $\lin(s_{\alpha,k})=(i\,j)$.
\end{lemma}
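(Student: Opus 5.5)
The plan is to prove the two inclusions in the statement of Lemma~\ref{lem:reflset} separately and then check the formula $s_{\alpha,k}=t_{k\alpha}s_\alpha$ and its linear part directly. I would begin with the formula itself, since both inclusions use it. For $v\in\RR^N$ we have
\[
s_{\alpha,k}(v)=v-(v\mid\alpha)\alpha+k\alpha=s_\alpha(v)+k\alpha .
\]
Hence $s_{\alpha,k}=t_{k\alpha}s_\alpha$. For $\alpha=e_i-e_j$, the map $s_\alpha$ sends $v$ to $v-(v_i-v_j)(e_i-e_j)$, which swaps the $i$-th and $j$-th coordinates. So $s_\alpha$ is the permutation matrix of $(i\,j)$, and since $k\alpha\in Q$ this shows $s_{\alpha,k}\in\Sa_N$ with $\lin(s_{\alpha,k})=(i\,j)$.

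Next, fix the standard Coxeter generators as in \cite[Chapter~4]{Hum90}. These are $S=\{s_{\alpha_1},\dots,s_{\alpha_n},s_{\theta,1}\}$ with $\alpha_i=e_i-e_{i+1}$ and $\theta=e_1-e_N$, the reflections in the walls of the fundamental alcove. Every element of $S$ lies in $T':=\{s_{\alpha,k}:\alpha\in\Phi,\ k\in\ZZ\}$. I would then show that $T'$ is closed under conjugation by $\Sa_N$, which gives $T\subseteq T'$. For this, take $w=t_\mu u$. The conjugate $ws_{\alpha,k}w^{-1}$ is an affine involution whose fixed set is $w(H_{\alpha,k})$. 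Since $u$ is orthogonal, this fixed set equals $\{v:(v\mid u\alpha)=k+(\mu\mid u\alpha)\}=H_{u\alpha,\,k+(\mu\mid u\alpha)}$. A conjugate of an orthogonal reflection is again the orthogonal reflection in the image hyperplane. Moreover $u\alpha\in\Phi$, and $(\mu\mid u\alpha)\in\ZZ$ because $\mu\in\ZZ^N$. So the conjugate lies in $T'$.

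For the reverse inclusion $T'\subseteq T$, I need to show that every $s_{\alpha,k}$ is conjugate to an element of $S$. First, all $s_\alpha$ with $\alpha\in\Phi$ are conjugate in $S_N=\langle s_{\alpha_1},\dots,s_{\alpha_n}\rangle$ to $s_{\alpha_1}$, because all transpositions are conjugate. Second, the level can be shifted by translations: by the computation above, $t_\mu s_{\alpha,k}t_{-\mu}=s_{\alpha,k+(\mu\mid\alpha)}$. For $\alpha=e_i-e_j$ with $N\ge2$, choosing $\mu=m(e_i-e_j)\in Q$ gives $(\mu\mid\alpha)=2m$, and choosing $\mu=e_i-e_l$ with $l\ne i,j$ (possible when $N\ge3$) gives shift $1$. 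The translation group $\mathcal T$ lies in $\Sa_N=\langle S\rangle$, so all levels are reached when $N\ge3$. When $N=2$ only even shifts are available, and I would handle the odd levels using $s_{\theta,1}\in S$ itself. Indeed, $s_{\theta,k}$ for odd $k$ is conjugate by an even shift to $s_{\theta,1}$, and even levels are conjugate to $s_{\theta,0}=s_{\alpha_1}$.

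The main obstacle is not a computation but the identification of $\Sa_N$ as a Coxeter group with generators $S$. This is exactly the content cited from \cite[Chapter~4]{Hum90}, and I would take it as given, along with the fact that $\langle S\rangle$ contains $\mathcal T$. The only other point needing care is the small case $N=2$ in the level-shifting step.
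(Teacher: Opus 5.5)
Your argument is correct. It differs from the paper mainly in that the paper gives no proof at all; it simply cites Humphreys.

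In Humphreys' treatment, the identification of the reflections with the $s_{\alpha,k}$ comes from the general alcove theory. Every hyperplane $H_{\alpha,k}$ is a wall of some alcove, and $W_a=\langle S\rangle$ acts transitively on alcoves. Hence each $s_{\alpha,k}$ is conjugate to a reflection in a wall of the fundamental alcove.

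You instead verify both inclusions by hand:
\begin{itemize}
\item $T\subseteq T'$ follows from the conjugation formula $x\,s_{\beta,l}\,x^{-1}=s_{\bar x\beta,\,l+(\nu\mid\bar x\beta)}$. This is exactly the paper's Lemma~\ref{lem:conj}, which is a direct calculation from the definitions, so there is no circularity.
\item $T'\subseteq T$ follows by explicitly conjugating with translations in $\mathcal T$ and permutations in $S_N$.
\end{itemize}

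Your route is elementary but uses the concrete shape of $\Phi$ and $Q$ in type $A$ (e.g.\ that $(\mu\mid\alpha)$ can be made $1$ or $2m$). So it does not transfer verbatim to other affine types, whereas the alcove argument works uniformly for every affine Weyl group. Both routes rest on the same external input: that $(\Sa_N,S)$ is a Coxeter system with $\langle S\rangle=\Sa_N\supseteq\mathcal T$. You correctly flag this as the part taken from Humphreys.

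A small simplification is available. The parity argument you reserve for $N=2$ works for every $N\ge2$:
\begin{itemize}
\item Even levels are conjugate to $s_{\alpha,0}$ via $t_{m\alpha}$.
\item Odd levels are conjugate to $s_{\alpha,1}$ in the same way.
\item $s_{\alpha,1}$ is conjugate to $s_{\theta,1}$ by any $u\in S_N$ with $u\alpha=\theta$, since $u\,s_{\alpha,1}\,u^{-1}=s_{u\alpha,1}$.
\end{itemize}
So the separate shift-by-one step for $N\ge3$ is not needed.
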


The following computation of conjugates is used frequently throughout the rest of this paper and is verified by direct calculations.

\begin{lemma}\label{lem:conj}
Let $x\in\Sa_N$ have linear part $\bar x$ and translation part $\nu$, so $x(v)=\bar x(v)+\nu$.
Then for all $\beta\in\Phi$ and $l\in\ZZ$,
\[
x\,s_{\beta,l}\,x^{-1}=s_{\bar x\beta,\ l+(\nu|\bar x\beta)} .
\]
In particular:
\begin{enumerate}[label=\textup{(\roman*)}]
\item 
$\displaystyle t_\nu s_{\beta,l}t_\nu^{-1}=s_{\beta,\ l+(\nu|\beta)}$;
\item 
$\displaystyle s_{\alpha,k}s_{\beta,l}s_{\alpha,k}=s_{\gamma,m}$ with $\gamma=s_{\alpha}(\beta) = \beta-(\beta|\alpha)\alpha$ and $m=l-(\beta|\alpha)k$.
\end{enumerate}
\end{lemma}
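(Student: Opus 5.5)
The plan is a direct computation of the conjugate, after which (i) and (ii) follow by specialization. I will use the formula $s_{\beta,l}(v)=v-((v\mid\beta)-l)\beta$ and the decomposition $x=t_\nu\bar x$, so that $x(v)=\bar x(v)+\nu$ and $x^{-1}(v)=\bar x^{-1}(v-\nu)$.

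Evaluate $x s_{\beta,l}x^{-1}$ at an arbitrary $v\in\RR^N$. Put $y=x^{-1}(v)=\bar x^{-1}(v-\nu)$. Then
\[
s_{\beta,l}(y)=y-\big((y\mid\beta)-l\big)\beta .
\]
Applying $x$, and using that $\bar x$ is linear, gives
\[
x s_{\beta,l}x^{-1}(v)=v-\big((y\mid\beta)-l\big)\,\bar x\beta .
\]
Since $\bar x$ is a permutation matrix, it is orthogonal. Hence
\[
(y\mid\beta)=(\bar x^{-1}(v-\nu)\mid\beta)=(v-\nu\mid\bar x\beta)=(v\mid\bar x\beta)-(\nu\mid\bar x\beta).
\]
Substituting this, the map becomes
\[
v\mapsto v-\big((v\mid\bar x\beta)-(l+(\nu\mid\bar x\beta))\big)\bar x\beta ,
\]
which is exactly $s_{\bar x\beta,\,l+(\nu\mid\bar x\beta)}$. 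Two checks are needed for this to be a reflection in $T$: $\bar x\beta\in\Phi$ because $S_N$ permutes $\Phi$, and $(\nu\mid\bar x\beta)\in\ZZ$ because $\nu\in Q\subseteq\ZZ^N$. So by Lemma~\ref{lem:reflset} the conjugate lies in $T$.

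For (i), take $x=t_\nu$, so $\bar x=\mathrm{id}$; the formula reads $s_{\beta,l+(\nu\mid\beta)}$ directly.

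For (ii), take $x=s_{\alpha,k}=t_{k\alpha}s_\alpha$ (Lemma~\ref{lem:reflset}). Here $s_{\alpha,k}^{-1}=s_{\alpha,k}$, $\bar x=s_\alpha$ and $\nu=k\alpha$. This gives $\gamma=s_\alpha\beta=\beta-(\beta\mid\alpha)\alpha$ and level
\[
l+k(\alpha\mid\gamma)=l+k\big((\alpha\mid\beta)-2(\beta\mid\alpha)\big)=l-k(\beta\mid\alpha),
\]
where the first equality uses $(\alpha\mid\alpha)=2$. This is the stated $m$.

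There is no real obstacle. The only points needing care are the sign and orthogonality bookkeeping in the inner-product step, and the norm $(\alpha\mid\alpha)=2$ used in (ii).
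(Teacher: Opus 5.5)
Your computation is correct and is exactly the ``direct calculation'' the paper invokes without writing out. You evaluate $x\,s_{\beta,l}\,x^{-1}$ at a point, use that the permutation $\bar x$ is orthogonal, and then specialize to $x=t_\nu$ and to $x=s_{\alpha,k}=t_{k\alpha}s_\alpha$ (with $(\alpha\mid\alpha)=2$); note also that the argument uses $\nu\in Q$ only for integrality, so (i) holds verbatim for $\nu\in\ZZ^N$, which is how the paper later applies it.
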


\medskip
\begin{lemma}\label{lem:pairs}
Let $\alpha\in\Phi$ and $a,b\in\ZZ$. Then $\displaystyle s_{\alpha,a}\,s_{\alpha,b}=t_{(a-b)\alpha}$.
\end{lemma}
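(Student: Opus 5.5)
The identity follows by composing the two reflections as affine maps, using the explicit formula for $s_{\alpha,k}$ given in the definition. First I record the linear behaviour of a single reflection. Since $\alpha\in\Phi$ has the form $e_i-e_j$, we have $(\alpha\mid\alpha)=2$. For every $v\in\RR^N$ this gives
\[
(s_{\alpha,b}(v)\mid\alpha)=(v\mid\alpha)-\big((v\mid\alpha)-b\big)\cdot 2=2b-(v\mid\alpha).
\]

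Next I apply $s_{\alpha,a}$ to the vector $s_{\alpha,b}(v)$. By the definition,
\[
s_{\alpha,a}\big(s_{\alpha,b}(v)\big)=s_{\alpha,b}(v)-\big(2b-(v\mid\alpha)-a\big)\alpha .
\]
Substituting $s_{\alpha,b}(v)=v-\big((v\mid\alpha)-b\big)\alpha$, the coefficient of $\alpha$ becomes
\[
-(v\mid\alpha)+b-2b+(v\mid\alpha)+a=a-b.
\]
All terms involving $(v\mid\alpha)$ cancel, so $s_{\alpha,a}s_{\alpha,b}(v)=v+(a-b)\alpha$ for every $v$. This is exactly the translation $t_{(a-b)\alpha}$. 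It lies in $\mathcal T$ because $(a-b)\alpha\in Q$.

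An alternative route uses Lemma~\ref{lem:reflset}, which writes $s_{\alpha,k}=t_{k\alpha}s_\alpha$. Applying the multiplication rule $(t_\mu u)(t_\nu v)=t_{\mu+u(\nu)}uv$ gives
\[
s_{\alpha,a}s_{\alpha,b}=t_{a\alpha+s_\alpha(b\alpha)}\,s_\alpha^2=t_{a\alpha-b\alpha}.
\]
Here we use $s_\alpha(\alpha)=-\alpha$ and $s_\alpha^2=1$. No step here is a real obstacle; the only point requiring care is the normalization $(\alpha\mid\alpha)=2$, which makes the reflection formula correct without a factor $2/(\alpha\mid\alpha)$.
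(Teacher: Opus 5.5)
Your computation is correct: both the direct composition of the affine maps (using $(\alpha\mid\alpha)=2$) and the route via $s_{\alpha,k}=t_{k\alpha}s_\alpha$ with the rule $(t_\mu u)(t_\nu v)=t_{\mu+u(\nu)}uv$ give $v\mapsto v+(a-b)\alpha$. The paper states this lemma without a written proof and treats it as a direct calculation of exactly this kind, so your argument matches the intended approach.
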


\section{Winding numbers and the block decomposition}\label{ch:winding}

\subsection{Winding numbers}

Throughout, $w=t_\mu u\in\Sa_N$ with $u\in S_N$ and $\mu\in Q$, and $C_1,\dots,C_c$ denote the
cycles of $u$, regarded as subsets of $\{1,\dots,N\}$ with fixed points counting as cycles of
length one. Thus $\{1,\dots,N\}=C_1\sqcup\dots\sqcup C_c$.

Recall the defintion of a winding number from Definition \ref{def:winding}.

\begin{example}\label{ex:winding}
Let $N=4$.
\begin{enumerate}[label=(\arabic*)]
\item For a translation $w=t_\lambda$ the linear part is the identity, whose cycles are the four singletons $\{1\},\{2\},\{3\},\{4\}$. So $c=4$ and $\omega_i=\lambda_i$, thus for translations the winding numbers are simply the coordinates of the translation vector.
\item Let $u=(3\,4)$ and $\mu=(1,-1,0,0)$, so $w=t_\mu u$. The cycles are
$\{1\},\{2\},\{3,4\}$, hence $c=3$ and $(\omega_1,\omega_2,\omega_3)=(1,-1,0)$.
\item Let $u=(1\,2\,3\,4)$ and $\mu=(1,0,0,-1)$. There is one cycle, $c=1$, and
$\omega_1=0$ in accordance to \eqref{eq:sumzero}.
\end{enumerate}
\end{example}

Winding numbers are an important invariant under conjugation. While the translation part $\mu$ is not invariant under conjugation, the winding numbers are.

\begin{lemma}\label{lem:winding}
Let $w=t_\mu u\in\Sa_N$ with cycles $C_1,\dots,C_c$ of $u$.
\begin{enumerate}[label=\textup{(\roman*)}]
\item For $\nu\in\ZZ^N$ we have $t_\nu\,w\,t_\nu^{-1}=t_{\mu+\nu-u(\nu)}\,u$, and
\[
\operatorname{Im}(1-u)=\Big\{x\in\RR^N\ :\ \sum_{i\in C_t}x_i=0\ \text{ for all }t\Big\}.
\]
Consequently the winding numbers $\omega_t$ along $C_t$ of $w$ do not change when $w$ is conjugated by a translation.
\item $w$ has finite order if and only if $\omega_t=0$ for all $t$.
\end{enumerate}
\end{lemma}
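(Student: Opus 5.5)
The plan is to prove the three claims of Lemma~\ref{lem:winding} in order: the conjugation formula by direct composition, then the description of $\operatorname{Im}(1-u)$ by linear algebra cycle by cycle, then the finite-order criterion by computing powers of $w$.

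\textbf{Conjugation formula.} Using the multiplication rule $(t_\mu u)(t_\nu v)=t_{\mu+u(\nu)}uv$ and $t_\nu^{-1}=t_{-\nu}$, we get
\[
t_\nu(t_\mu u)t_{-\nu}=t_{\nu+\mu}\,u\,t_{-\nu}=t_{\nu+\mu-u(\nu)}\,u .
\]
The same formula holds for $\nu\in\ZZ^N$, since only the affine maps are involved.

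\textbf{The image of $1-u$.} Decompose $\RR^N=\bigoplus_t \RR^{C_t}$. Each summand is $u$-stable, so $\operatorname{Im}(1-u)=\bigoplus_t \operatorname{Im}\big((1-u)|_{\RR^{C_t}}\big)$.

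On the summand for one cycle $C$ of length $r$, $u$ acts as a cyclic permutation of coordinates. The coordinate sum is $u$-invariant, so $\operatorname{Im}(1-u)$ lies in the sum-zero hyperplane of $\RR^{C}$. For the dimensions, $\ker(1-u)$ consists of the vectors constant on $C$, which is one-dimensional. Hence the image has dimension $r-1$ and equals the sum-zero hyperplane.

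Taking the direct sum over all cycles gives the stated description. In particular, for $\nu\in\ZZ^N$ the vector $\nu-u(\nu)$ has zero sum over each $C_t$. Therefore $\sum_{i\in C_t}(\mu+\nu-u\nu)_i=\omega_t$, and the winding numbers are unchanged under conjugation by translations. If the winding numbers should be well-defined for conjugation by arbitrary elements, one also needs to conjugate by $v\in S_N$. That conjugation permutes the cycles and the coordinates of $\mu$ compatibly, but this is not asked here.

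\textbf{Finite order.} Let $m$ be the order of $u$. Then
\[
w^m=t_{\mu+u\mu+\dots+u^{m-1}\mu}.
\]
The $i$-th coordinate of $\sum_{k<m}u^k\mu$ for $i\in C_t$ equals $(m/|C_t|)\,\omega_t$, because as $k$ runs over $m$ consecutive values, $u^{-k}(i)$ runs over $C_t$ exactly $m/|C_t|$ times. Note $(u^k\mu)_i=\mu_{u^{-k}(i)}$.

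So $w^m$ is the translation by the vector with entry $(m/|C_t|)\omega_t$ on $C_t$. Thus $w$ has finite order iff $w^m$ does iff this translation is zero iff all $\omega_t=0$, since nonzero translations have infinite order.

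\textbf{Main obstacle.} The only delicate step is the bookkeeping of how $u$ acts on coordinates, namely $u(e_i)=e_{u(i)}$, hence $(u\mu)_{u(i)}=\mu_i$. It matters only for the averaging count in the finite-order claim, and the count is insensitive to the direction of the permutation.
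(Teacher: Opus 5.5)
Your proof is correct. Part (i) follows the paper's argument: the same cycle-by-cycle decomposition, the observation that the image lies in the sum-zero hyperplane, and the dimension count via the kernel of $1-u$ on $\RR^{C_t}$ being the line of constant vectors.

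Part (ii) takes a different route. The paper argues that $w$ has finite order if and only if it fixes a point. That is equivalent to $(1-u)x=\mu$ being solvable over $\RR$, and the paper then invokes the description of $\operatorname{Im}(1-u)$ from (i). You instead compute $w^m=t_{\sum_{k<m}u^k\mu}$ for $m$ the order of $u$. You then observe that this is the translation whose entries on each $C_t$ all equal $(m/|C_t|)\,\omega_t$.

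The paper's argument is shorter once (i) is available. However, it silently uses two facts: an affine map of finite order has a fixed point (average an orbit), and conversely a map with a fixed point $x_0$ is conjugate by the real translation $t_{x_0}$ to its linear part $u$. Your argument needs neither of these facts nor part (i), and it is purely algebraic. It also gives the extra explicit information that a fixed power of $w$ is a translation whose coordinates are positive integer multiples of the winding numbers.
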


Before proving this statement, we consider the group $\Sa_N^{+}:=S_N\ltimes\ZZ^N$. Let $\nu \in \ZZ^N$ and $t_{\mu}u \in \Sa_N$. Then $\nu-u(\nu) = (1-u)(\nu) \in Q$ because $\sum_i \nu_i = \sum_i u(\nu)_i$. Hence $\mu + \nu -u(\nu) \in Q$ and $t_{\mu + \nu -u(\nu)} \in \Sa_N$. Therefore
\[ 
t_{\nu} (t_{\mu} u) t_{\nu}^{-1} = t_{\mu + \nu -u(\nu)} u \in \Sa_n
\]
which shows that $\Sa_N$ is normal in $\Sa_N^{+}$ and conjugation by $t_{\nu}$ with $\nu \in \ZZ^N$ is an automorphismus of $\Sa_N$ mapping reflections to reflections by Lemma~\ref{lem:conj}(i).

Furthermore, for a subset $C\subseteq\{1,\dots,N\}$ we put
\[
\RR^{C}=\operatorname{span}_\RR\{e_i:i\in C\}=\{x\in\RR^N: x_i=0\ \text{for all }i\notin C\}.
\]
Analogously, we define $\ZZ^C$.

\begin{proof}[Proof of Lemma \ref{lem:winding}]
(i) For the description of $\operatorname{Im}(1-u)$ we decompose $\RR^N$ along the cycles. Since $\{1,\dots,N\}=C_1\sqcup\dots\sqcup C_c$ we get $\RR^N=\bigoplus_{t=1}^{c}\RR^{C_t}$, and $u$ preserves each summand, because $u$ maps $C_t$ to itself and hence permutes the basis vectors $e_i$, $i\in C_t$, among themselves.

We claim that for $\sigma=u|_{C_t}$ of $\RR^{C_t}$ one has $\operatorname{Im}(1-\sigma)=\{x\in\RR^{C_t}:\sum_{i\in C_t}x_i=0\}$. The inclusion ``$\subseteq$'' holds because $\sum_{i\in C_t}((1-\sigma)(x))_i = \sum_{i\in C_t}(x-\sigma(x))_i=0$. For equality it suffices to compare dimensions. The right-hand side is a hyperplane and has dimension
$|C_t|-1$. For the dimension of the left-hand side observe that
\[ 
\ker(1-\sigma) = \{ (x_1, \ldots, x_N) \in \RR^N : x_i = 0 ~\text{for } i \notin C_t, ~x_i = x_j ~\text{for all }i,j \in C_t \}.
\]
Hence $\ker(1-\sigma)$ is a line and by the rank-nullity theorem we conclude $\dim\operatorname{Im}(1-\sigma)=|C_t|-1$. As $C_1, \ldots, C_c$ are the cycles of $u$, we obtain the claimed description of $\operatorname{Im}(1-u)$.

For $x\in\operatorname{Im}(1-u)$ the sums $\sum_{i\in C_t}x_i$ vanish, so replacing
$\mu$ by $\mu+x$ does not change any $\omega_t$, which is the second assertion.

(ii) The element $w$ is of finite order if and only if a point $x$ is fixed by $w$. The latter being equivalent to $u(x)+\mu=x$, i.e.\ $(1-u)x=\mu$. By (i) this
equation is solvable over $\RR$ precisely when all cycle sums of $\mu$ vanish, i.e.\ when all
$\omega_t=0$.
\end{proof}

\begin{remark}\label{rem:converse}
Two elements with the same linear part and the same winding numbers are in general not conjugate by a translation of $\Sa_N$. For instance let $N=3$ and $u=(1\,2\,3)$. There is a single cycle, so $\omega_1=0$ for every choice of $\mu$. But $u$ and $t_{e_1-e_2}u$ are not conjugate by any
translation of $\Sa_3$.
\end{remark}

\medskip
Recall Definition \ref{def:balanced} of a balanced partition.

\begin{lemma}\label{lem:nonzero}
Let $\pi\in\mathcal P(w)$ and let $B\in\pi$ with $\#B\ge2$. Then $\omega_t\ne0$ for every
$C_t\in B$. In particular $g_B\ge1$.
\end{lemma}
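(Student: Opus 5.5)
We argue by contradiction using the maximality of $\pi$. Suppose $B\in\pi$ has $\#B\ge2$ and contains a cycle $C_s$ with $\omega_s=0$. We will split $B$ into $\{C_s\}$ and $B\setminus\{C_s\}$ and obtain a balanced partition with more than $q$ parts.

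First, both new parts are nonempty because $\#B\ge2$. Second, both are balanced. The part $\{C_s\}$ has sum $\omega_s=0$. The part $B\setminus\{C_s\}$ has sum $\sum_{C_t\in B}\omega_t-\omega_s=0-0=0$, since $B$ is balanced.

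The resulting partition $\pi'=(\pi\setminus\{B\})\cup\{\{C_s\},B\setminus\{C_s\}\}$ is therefore a balanced partition of $\{C_1,\dots,C_c\}$. It has $q+1$ parts, which contradicts the definition of $q$ as the largest possible number of parts of a balanced partition. Hence $\omega_t\neq0$ for every $C_t\in B$.

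For the final claim, $B$ is nonempty and all of its winding numbers are nonzero integers. So $g_B=\gcd\{\omega_t : C_t\in B\}$ is the gcd of nonzero integers, which is a positive integer; thus $g_B\ge1$. The only point needing care is that splitting off $\{C_s\}$ leaves a nonempty remainder, and this is exactly where the hypothesis $\#B\ge2$ is used.
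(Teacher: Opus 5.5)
Your proof is correct and follows essentially the same argument as the paper: you split $B$ into $\{C_s\}$ and $B\setminus\{C_s\}$, note that both parts are nonempty and balanced, and obtain a balanced partition with $q+1$ parts, contradicting the maximality of $q$. Your added remark that $g_B$ is then a gcd of nonzero integers, and hence positive, correctly fills in the ``in particular'' that the paper leaves implicit.
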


\begin{proof}
Suppose $\omega_t=0$ for some $C_t\in B$ with $\#B\ge2$. Replace the part $B$ by the two parts
$\{C_t\}$ and $B\setminus\{C_t\}$. The first has winding sum $\omega_t=0$, and the second has
winding sum $\big(\sum_{C_s\in B}\omega_s\big)-\omega_t=0-0=0$. Both parts are nonempty, since
$\#B\ge2$. So we obtain a balanced partition with $q+1$ parts, contradicting the maximality
of $q$.
\end{proof}

\begin{example}\label{ex:balanced}
We compute $\mathcal P$ in three cases with $N=4$.
\begin{enumerate}[label=(\arabic*)]
\item $\lambda=(2,-1,-1,0)$, $w=t_\lambda$: the cycles are the singletons with winding numbers $2,-1,-1,0$. A part containing $\{1\}$ must contain both $\{2\}$ and $\{3\}$; and $\{4\}$ has winding number $0$, so it forms a part by itself. Hence the unique maximal balanced partition is
$\big\{\{1\},\{2\},\{3\}\big\}\cup\big\{\{4\}\big\}$ with $q=2$, and the $\gcd$ of the first
part is $\gcd(2,-1,-1)=1$.
\item $\lambda=(1,1,-1,-1)$: here we have $q=2$, and there are two maximal balanced partitions $\big\{\{1\},\{3\}\big\},\big\{\{2\},\{4\}\big\}$ and $\big\{\{1\},\{4\}\big\},\big\{\{2\},\{3\}\big\}$.
\item $\lambda=(2,1,-1,-2)$: the unique maximal balanced partition is $\big\{\{1\},\{4\}\big\}$, $\big\{\{2\},\{3\}\big\}$, so $q=2$ and the $\gcd$ of the first part is $\gcd(2,-2)=2$.
\end{enumerate}
\end{example}

\subsection{The block decomposition of a factorization}

We aim to show that a reduced factorization decomposes into independent pieces, indexed by a balanced partition, and that the Hurwitz action respects this decomposition.

\begin{definition}\label{def:DirectionGraph}
Let $F=(t_1,\dots,t_m)$ be a tuple of reflections of $\Sa_N$. The \emph{direction graph} $G(F)$ is the graph with vertex set $\{1,\dots,N\}$ and for each reflection $t_r$ with direction $e_i-e_j$, an edge between $i$ and $j$. The \emph{blocks} of $F$ are the vertex sets of the connected components of $G(F)$ (isolated vertices form blocks of size one).
\end{definition}

Observe that two reflections whose directions involve disjoint index pairs commute as they act on complementary sets of coordinates. Hence entries of $F$ belonging to different blocks commute, and we may rearrange the entries of $F$ by blocks without changing the product. Let $F=(t_1,\dots,t_m)$ be a tuple of reflections of $\Sa_N$ with blocks $B_1, \dots, B_r$. Note that we can rearrange the entries of $F$ by using the Hurwitz action such that the entries are grouped by their blocks. More precisely, there is $(t_{i_1}, \dots, t_{i_m}) \sim (t_1,\dots,t_m)$ and $k_1 < k_2 <\dots < k_r$ such that the directions of $t_{i_1}, \dots, t_{i_{k_1}}$ correspond to edges between vertices in $B_1$ and $i_1 < \dots < i_{k_1}$, the directions of $t_{i_{k_1 +1}}, \dots, t_{i_{k_2}}$ correspond to edges between vertices in $B_2$ and $i_{k_1 +1} < \dots < i_{k_2}$, and so on.

For a subset $B\subseteq\{1,\dots,N\}$ let $S_B$ be the group of permutations of $B$,
$Q(B)=\{\lambda\in\ZZ^B:\sum_{i\in B}\lambda_i=0\}$, and $\Sa_B=S_B\ltimes Q(B)$ the
corresponding affine symmetric group, viewed inside $\Sa_N$ as acting trivially on the
coordinates outside $B$.

\begin{lemma}\label{lem:blocks}
Let $w\in\Sa_N$, let $F\in\Red_T(w)$ and let $B_1,\dots,B_r$ be the blocks of $F$. Then:
\begin{enumerate}[label=\textup{(\roman*)}]
\item each block $B_s$ is a union of cycles of $u=\lin(w)$;
\item grouping the entries of $F$ by blocks gives a factorization $w=w_1w_2\cdots w_r$ with
$w_s\in\Sa_{B_s}$, the corresponding subtuples $F_s$ are reduced reflection factorizations of $w_s$ and $\ell_T(w)=\sum_s\ell_T(w_s)$;
\item the partition of the cycles induced by $B_1,\dots,B_r$ is balanced and consequently $r\le q(w)$;
\item two reduced factorizations of $w$ with the same blocks are Hurwitz equivalent if and only
if their subtuples are Hurwitz equivalent blockwise, and the number of Hurwitz orbits with a
fixed block partition is the product over the blocks of the numbers of Hurwitz orbits on
$\Red_{T\cap\Sa_{B_s}}(w_s)$.
\end{enumerate}
\end{lemma}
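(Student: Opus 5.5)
I will begin by grouping $F$ by blocks, as explained just before the statement. Since entries from different blocks commute, the regrouped tuple lies in the Hurwitz orbit of $F$ and is still in $\Red_T(w)$. Let $w_s$ be the product of the entries whose direction lies in $B_s$. Each such reflection $s_{e_i-e_j,k}$ with $i,j\in B_s$ lies in $\Sa_{B_s}$, so $w_s\in\Sa_{B_s}$. The $w_s$ act on disjoint coordinate sets and commute, so $w=w_1\cdots w_r$.

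\textbf{Part (i).} Taking linear parts gives $u=\lin(w_1)\cdots\lin(w_r)$, with $\lin(w_s)\in S_{B_s}$. Hence $u$ preserves each $B_s$, so each $B_s$ is a union of cycles of $u$.

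\textbf{Part (ii).} Let $F_s$ be the subtuple for block $s$, of length $m_s$. Then $\ell_T(w)=\sum_s m_s\ge\sum_s\ell_T(w_s)$, since $\ell_T(w_s)$ is at most the length of any factorization. Conversely, concatenating reduced factorizations of the $w_s$ gives a factorization of $w$. Hence $\ell_T(w)\le\sum_s\ell_T(w_s)$, so equality holds and each $m_s=\ell_T(w_s)$. One subtlety: $\ell_T(w_s)$ should mean length in $\Sa_N$. It is also natural to compare it with the length using only reflections of $\Sa_{B_s}$. The same squeeze settles this, because $m_s$ is attained by reflections in $\Sa_{B_s}$. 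So $F_s$ is reduced with respect to $T\cap\Sa_{B_s}$ as well.

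\textbf{Part (iii).} Fix a block $B_s$. The translation part of $w_s$ lies in $Q(B_s)$, so its coordinates over $B_s$ sum to zero. The translation part of $w$ restricted to $B_s$ equals that of $w_s$, because the other factors act trivially there. The cycles of $u$ inside $B_s$ are exactly the cycles of $\lin(w_s)$. Therefore the sum of the $\omega_t$ over $C_t\subseteq B_s$ is the coordinate sum of the translation part of $w_s$, which is $0$. So the partition of the cycles induced by the blocks is balanced, and hence $r\le q(w)$.

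\textbf{Part (iv).} This is the main obstacle. The key fact I would establish first is that a Hurwitz move between two reflections with directions in different blocks just swaps them. A move between reflections in the same block conjugates one by the other, and by Lemma~\ref{lem:conj}(ii) the resulting direction stays within that block.

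Consequently the Hurwitz action never changes the block partition: it preserves the multiset of edges between blocks and keeps each component's vertex set inside its block. It might appear to split a block, but this is impossible. The block structure is determined by $w$ through (i) and by the reducedness count in (ii), and the subgroup generated within a block is invariant by Lemma~\ref{lem:subgroupinv}.

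Now take $F\sim F'$ with the same blocks and regroup both by blocks. A braid acting on the regrouped tuple can be followed block by block: moves within a block act as Hurwitz moves on that block's subtuple, and moves across blocks are mere swaps. The products $w_s$ are therefore determined by $F$, and the braid induces Hurwitz equivalences of the subtuples $F_s\sim F'_s$. To make this precise, I would project the braid action onto $\prod_s\mathcal B_{m_s}$ via the induced "colored" strands. The converse direction is immediate: concatenate the blockwise braids.

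A subtle point is that $w_s$ might a priori depend on the factorization rather than only on the blocks. It does not, because $w_s$ equals $w$ restricted to the coordinates $B_s$, which is determined by $w$. The count of orbits then follows as the product over the blocks of the numbers of orbits on $\Red_{T\cap\Sa_{B_s}}(w_s)$, using the remark in (ii) that blockwise reduced factorizations use only reflections of $\Sa_{B_s}$.
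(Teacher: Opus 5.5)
Parts (i)--(iii) match the paper; your squeeze in (ii) is equivalent to its ``replace a non-reduced $F_s$ by a shorter factorization''. Your coloured-strand argument in (iv) is also sound, and it is more explicit than the paper: you use Lemma~\ref{lem:conj}(ii) to see that a move inside a block keeps the new direction in that block.

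The gap is your claim that the Hurwitz action cannot split a block. The ``if and only if'' part of (iv) does not need this, because equal blocks are a hypothesis there. The orbit count does need it, since each orbit must have a well-defined block partition. Your justification fails on three counts:
\begin{enumerate}[label=\textup{(\alph*)}]
\item Part (i) only says that blocks are unions of cycles, and the block partition is \emph{not} determined by $w$. For $w=t_\lambda$ with $\lambda=(1,1,-1,-1)$, both $(s_{e_1-e_3,1},s_{e_1-e_3,0},s_{e_2-e_4,1},s_{e_2-e_4,0})$ and $(s_{e_1-e_4,1},s_{e_1-e_4,0},s_{e_2-e_3,1},s_{e_2-e_3,0})$ lie in $\Red_T(w)$, with different blocks.
\item The ``reducedness count'' would rule out a refinement only if you already knew $r=q(w)$ for every reduced factorization. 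That is Theorem~A, which is proved afterwards using this lemma.
\item Lemma~\ref{lem:subgroupinv} alone says nothing about blocks unless you show that the subgroup determines them.
\end{enumerate}

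That missing step is exactly the paper's Lemma~\ref{lem:blocksgroup}. Since $\langle F\rangle\le\Sa_{B_1}\cdots\Sa_{B_r}$, any reflection $s_{e_i-e_j,k}\in\langle F\rangle$ has linear part $(i\,j)\in S_{B_1}\times\cdots\times S_{B_r}$. This transposition preserves every $B_s$, which forces $i,j$ into the same block. Hence the components of $G(F)$ coincide with those of $G(\langle F\rangle)$, and these are constant on orbits.

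You could also close the gap with your own observation, without any subgroup theory. For any braid $\beta$, every direction of $\beta\cdot F$ lies inside a block of $F$, so the block partition of $\beta\cdot F$ refines that of $F$. Applying the same observation to $\beta^{-1}$ acting on $\beta\cdot F$ gives the reverse refinement, hence equality.

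The paper's subgroup version is the stronger statement. It is needed later in the general case of Theorem~\ref{thm:main}, where $F$ and $F'$ are only assumed to generate the same subgroup and are not known to be Hurwitz equivalent.
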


\begin{proof}
Write $F=(t_1,\dots,t_m)$ and assume that the entries are already grouped by blocks. Let $F_s$ be the subtuple of entries whose direction is an edge inside $B_s$, and let $w_s$ be its product. Then $w=w_1\cdots w_r$, and $w_s\in\Sa_{B_s}$ because all its factors are.

(i) Since $\lin(w)=\lin(w_1)\cdots\lin(w_r)$ and $\lin(w_s)\in S_{B_s}$ with the $B_s$ disjoint,
$u$ preserves each $B_s$. Hence $B_s$ is a union of cycles.

(ii) If some $F_s$ were not reduced, replacing it by a shorter factorization of $w_s$ would give
a shorter factorization of $w$, contradicting $F\in\Red_T(w)$. Hence each $F_s$ is reduced and
the lengths add up.

(iii) By (i) each $B_s$ is a union of cycles, so the $B_s$ induce a partition of
$\{C_1,\dots,C_c\}$. Fix $s$ and let $\mu^{(s)}$ be the translation part of $w_s$. Since
$w_s\in\Sa_{B_s}$ we have $\mu^{(s)}\in Q(B_s)$, i.e.\ $\sum_{i\in B_s}\mu^{(s)}_i=0$.
Now the translation part of $w=w_1\cdots w_r$ is $\mu=\sum_s\mu^{(s)}$ (the linear parts act
trivially outside their own blocks), and $\mu^{(s)}$ is supported on
$B_s$, that is $(\mu^{(s)})_i = 0$ for all $i \notin B_s$. Hence
\[
\sum_{C_t\subseteq B_s}\omega_t=\sum_{i\in B_s}\mu_i=\sum_{i\in B_s}\mu^{(s)}_i=0,
\]
which shows that the induced partition is balanced. Since a balanced
partition has at most $q$ parts, $r\le q$.

(iv) A Hurwitz move applied to two neighbouring entries lying in different blocks simply interchanges them, because they commute. Hence the orbit of $F$ consists exactly of all interleavings of tuples obtained from the $F_s$ by moves inside the blocks. Note also that the blocks cannot change within an orbit. This is Lemma~\ref{lem:blocksgroup} below, which shows that the blocks are determined by the subgroup $\langle F\rangle$, and that subgroup is constant on the orbit by Lemma~\ref{lem:subgroupinv}.
\end{proof}

\begin{lemma}\label{lem:blocksgroup}
Let $F$ be any tuple of reflections of $\Sa_N$ and let $G(\langle F\rangle)$ be the graph on
$\{1,\dots,N\}$ whose edges are the directions of \emph{all} reflections contained in
$\langle F\rangle$. Then $G(F)$ and $G(\langle F\rangle)$ have the same connected components. In
particular the blocks of $F$ can be read off from the subgroup $\langle F\rangle$ alone, and are
therefore constant on Hurwitz orbits.
\end{lemma}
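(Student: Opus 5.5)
Since $G(F)$ is a subgraph of $G(\langle F\rangle)$ (every entry of $F$ is a reflection in $\langle F\rangle$), every connected component of $G(F)$ lies inside a component of $G(\langle F\rangle)$. It remains to show the converse: if $s_{\gamma,m}\in\langle F\rangle$ with $\gamma=e_i-e_j$, then $i$ and $j$ lie in the same block of $F$.

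Let $B_1,\dots,B_r$ be the blocks of $F$. Every entry of $F$ has direction $e_a-e_b$ with $a,b$ in a common block $B_s$. Hence by Lemma~\ref{lem:reflset} its linear part $(a\,b)$ lies in $S_{B_1}\times\dots\times S_{B_r}$. Since $\lin$ is a homomorphism, $\lin(\langle F\rangle)\le S_{B_1}\times\dots\times S_{B_r}$. Now suppose $s_{\gamma,m}\in\langle F\rangle$ with $\gamma=e_i-e_j$. Then $\lin(s_{\gamma,m})=(i\,j)$ preserves every $B_s$. This forces $i$ and $j$ to lie in the same block, since a transposition swapping an element of $B_s$ with an element outside $B_s$ does not preserve $B_s$. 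Therefore every edge of $G(\langle F\rangle)$ joins two vertices of the same block of $F$. So the components of $G(\langle F\rangle)$ are contained in blocks of $F$, and the two graphs have the same connected components.

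For the last assertion, the graph $G(\langle F\rangle)$ depends only on the subgroup $\langle F\rangle$, hence so do the blocks of $F$. By Lemma~\ref{lem:subgroupinv}, $\langle F\rangle$ is constant on Hurwitz orbits, so the blocks are too.

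No step is a real obstacle. The only point needing care is recording that the linear part of $s_{e_i-e_j,k}$ is $(i\,j)$, independently of the level $k$, which is exactly Lemma~\ref{lem:reflset}. This lets us pass from the affine group to the finite symmetric group, where the block-preservation argument is immediate.
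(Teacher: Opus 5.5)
Your proposal is correct and follows the paper's argument. You show that every edge of $G(\langle F\rangle)$ lies inside a block, because its linear part $(i\,j)$ lies in $S_{B_1}\times\dots\times S_{B_r}$ and so preserves each block, and then you use Lemma~\ref{lem:subgroupinv} for Hurwitz invariance; the only cosmetic difference is that you pass directly to $\lin(\langle F\rangle)$ rather than first noting $\langle F\rangle\le\Sa_{B_1}\cdots\Sa_{B_r}$.
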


\begin{proof}
Since the entries of $F$ lie in $\langle F\rangle$, every edge of $G(F)$ is an edge of $G(\langle F\rangle)$, so each component of $G(F)$ is contained in a component of $G(\langle F\rangle)$. It remains to show that the reflections in $\langle F\rangle \setminus F$ do not join two distinct blocks of $F$.

Let $B_1,\dots,B_r$ be the blocks of $F$. Every entry of $F$ has its direction inside a single $B_s$ and hence lies in the subgroup $\Sa_{B_s}$, so
\[
\langle F\rangle\ \le\ \Sa_{B_1}\Sa_{B_2}\cdots\Sa_{B_r}\ \le\ \Sa_N.
\]
Now let $s_{e_i-e_j,k}\in\langle F\rangle$ be arbitrary. Then $\lin(s_{e_i-e_j,k}) = (i\ j) \in S_{B_1}\times\dots\times S_{B_r}$ and hence maps every $B_s$ to itself. But a transposition $(i\ j)$ preserves a set $B$ setwise if and only if either $i,j\in B$ or $i,j\notin B$. Since the $B_s$ partition $\{1,\dots,N\}$, the indices $i$ and $j$ must lie in the same block. So every edge of $G(\langle F\rangle)$ lies inside a block of $F$.

Consequently $G(\langle F\rangle)$ has no edge joining two distinct blocks, while each block is already connected in the subgraph $G(F)$, which shows that $G(F)$ and $G(\langle F\rangle)$ have the same connected components.
\end{proof}

By Lemma~\ref{lem:blocks} we can restrict ourselves to the case of a single block.

\begin{definition}\label{conv:irred}
We say that $w\in\Sa_N$ is \emph{irreducible} if $q(w)=1$, that is, if the only balanced partition is the one with a single part. 
\end{definition}

By Lemma~\ref{lem:blocks} every reduced reflection factorization of an irreducible element has a connected direction graph. In Sections~\ref{ch:length} and \ref{ch:main} we first treat irreducible elements and treat the general case afterwards.

Note that by Lemma~\ref{lem:nonzero}, if $w$ is irreducible and $c\ge2$ then all winding numbers
are nonzero. If $c=1$ then $\omega_1=0$ by \eqref{eq:sumzero} and $w$ is of finite order by
Lemma~\ref{lem:winding}(ii).

\begin{remark}\label{rem:graphs}
We defined the direction graph in terms of the directions of reflections. Nevertheless, since the linear part of a reflection with direction $e_i-e_j$ is given by the transposition $(i\ j)$, we can associate a graph in the same way to a set of transpositions, that is, for each transposition $(i\ j)$ there is an edge between $i$ and $j$. We will do so in Sections \ref{ch:length} and \ref{ch:main}.
\end{remark}

\section{Reflection subgroups of the affine symmetric group}\label{ch:sub}

Theorem~B asserts that the subgroup generated by a reduced reflection factorization determines its Hurwitz orbit. We therefore investigate reflection subgroups in type $\widetilde A_n$.

\begin{definition}
A subgroup $W'\le\Sa_N$ is a \emph{reflection subgroup} if it is generated by the reflections it contains, i.e.\ $W'=\langle T\cap W'\rangle$. For $\gamma\in\Phi$ we call
\[
K_\gamma(W')=\{k\in\ZZ\ :\ s_{\gamma,k}\in W'\}
\]
the \emph{level set} of $W'$ in direction $\gamma$.
\end{definition}

A general theory of reflection subgroups is due to Dyer \cite{Dyer90}. For our purposes the explicit description obtained below is more convenient.

\subsection{Subgroups with full linear part}\label{sec:structure}

\begin{lemma}\label{lem:levelcoset}
Let $W'\le\Sa_N$ be a subgroup and $\gamma\in\Phi$ with $K_\gamma(W')\ne\emptyset$. Then
$K_\gamma(W')$ is a coset $c_\gamma+e_\gamma\ZZ$ for some $c_\gamma\in\ZZ$ and $e_\gamma\ge0$.
\end{lemma}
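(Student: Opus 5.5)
The plan is to show that $K_\gamma(W')$ is closed under the affine operation $(a,b,c)\mapsto a-b+c$. A nonempty subset of $\ZZ$ with this property is a translate of a subgroup of $\ZZ$. Every subgroup of $\ZZ$ has the form $e\ZZ$ with $e\ge 0$, and this gives the claim.

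First I use Lemma~\ref{lem:pairs}. For $a,b\in K_\gamma(W')$ the element $s_{\gamma,a}s_{\gamma,b}=t_{(a-b)\gamma}$ lies in $W'$. Next I conjugate $s_{\gamma,c}$ by this translation, for any $c\in K_\gamma(W')$. Lemma~\ref{lem:conj}(i) gives
\[
t_{(a-b)\gamma}\,s_{\gamma,c}\,t_{(a-b)\gamma}^{-1}=s_{\gamma,\,c+(a-b)(\gamma\mid\gamma)}=s_{\gamma,\,c+2(a-b)}\in W',
\]
since $(\gamma\mid\gamma)=2$. This only yields closure under $c\mapsto c+2(a-b)$, so I need a sharper tool.

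The better tool is Lemma~\ref{lem:conj}(ii) with $\alpha=\beta=\gamma$. It gives $s_{\gamma,a}s_{\gamma,c}s_{\gamma,a}=s_{\gamma',m}$ with $\gamma'=\gamma-2\gamma=-\gamma$ and $m=c-2a$. Using $s_{-\gamma,m}=s_{\gamma,-m}$, we get $s_{\gamma,2a-c}\in W'$. Thus $K_\gamma(W')$ is closed under the reflection $c\mapsto 2a-c$ about each of its own points.

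Composing two such reflections, about $a$ and then about $b$, sends $c\mapsto 2b-(2a-c)=c+2(b-a)$. Combined with the reflections this gives the following: $K_\gamma(W')$ contains $c+2(b-a)$ and $2a-c$. Taking $c=b$ in the latter gives $2a-b$, which is only a single step. To get $a-b+c$ in general I use the group structure instead.

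Fix $c_0\in K:=K_\gamma(W')$ and put $D=\{k-c_0: k\in K\}$. The key fact to establish is that $D$ is a subgroup of $\ZZ$.

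For $x,y\in D$, reflecting $c_0+y$ about $c_0$ shows $-y\in D$. I still need $x+y\in D$. Reflecting about $c_0+x$ gives $c_0+2x-y$, and reflecting about $c_0$ gives $c_0-y$. Hence $D$ is closed under $y\mapsto -y$ and $(x,y)\mapsto 2x-y$, and therefore under $y\mapsto 2x+y$. So $D$ contains $2D$-sums but not obviously $x+y$.

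This is the main obstacle. Reflections alone generate the infinite dihedral group acting on the integers, whose orbits can be unions of two cosets. For example, $K=\{0,1\}$ is closed under reflection about its own points only if $2-0=2$ and $-1$ also belong to $K$. The closure of $\{0,1\}$ under these reflections is all of $\ZZ$, so this case is fine. In general, though, reflection closure gives $D\supseteq 2\langle D\rangle$ together with symmetry.

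To finish, I use the explicit closure. The set generated by reflections about points of $K$ is $K$ itself. Let $e=\gcd(D)$ and write $e=\sum n_ix_i$ with $x_i\in D$. I then argue by induction that $D$ is closed under $y\mapsto y\pm 2x$. Take the minimal positive element $d$ of $D$ and suppose some element is not a multiple of $d$. Reducing it modulo $2d$ and reflecting about $0$ and $d$ yields a positive element smaller than $d$, a contradiction. Hence $D\subseteq d\ZZ$. Moreover $0,d\in D$, and iterated reflections about $0$ and $d$ give all of $d\ZZ$. So $D=d\ZZ$, and $K=c_0+d\ZZ$ with $e_\gamma=d$. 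If $D=\{0\}$ then $e_\gamma=0$.
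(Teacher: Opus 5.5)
Your argument is correct and is essentially the paper's proof. You get closure of $K_\gamma(W')$ under $k\mapsto 2a-k$ from Lemma~\ref{lem:conj}(ii) with $\alpha=\beta=\gamma$, pass to the translate $D\ni 0$, and compose two point reflections to get invariance under translation by $2d$. The only difference is the last step: you take $d$ to be the least positive element of $D$ and reduce modulo $2d$, plus one reflection about $d$, to get $D\subseteq d\ZZ$, while the paper takes $d$ as the generator of $\langle A\rangle$ and concludes from $2d\ZZ\subseteq A\subseteq d\ZZ$ by an index-two argument; your detours through the translations $t_{(a-b)\gamma}$ and the gcd combination are unnecessary and can be deleted.
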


\begin{proof}
Write $K=K_\gamma(W')$.

\emph{Step 1.}
Let $k,k'\in K$, so that $s_{\gamma,k},s_{\gamma,k'}\in W'$. By Lemma~\ref{lem:conj}(ii) we obtain
$$
s_{\gamma,k}\,s_{\gamma,k'}\,s_{\gamma,k} = s_{\gamma, 2k-k'} \in W' ~\Rightarrow ~ 2k-k' \in K.
$$ 

\emph{Step 2.} Choose any $c\in K \ne\emptyset$, and put $A=\{k-c : k \in K\}$, so that $0\in A$. If $a,b\in A$ then $c+a,c+b\in K$, hence by Step~1 we have 
$$
2(c+a)-(c+b)=c+(2a-b)\in K ~\Rightarrow ~ 2a-b\in A.
$$
It therefore suffices to show that $A=d\ZZ$ for some $d\ge0$ as this would show $K=c+d\ZZ$, with $c_\gamma=c$ and $e_\gamma=d$.

\emph{Step 3.} For $a\in A$ let
$r_a:\ZZ\to\ZZ$, $r_a(x)=2a-x$: by Step~2 $r_a$ is a map from $A$ to $A$. Composing two such maps gives a translation:
\[
r_a\big(r_b(x)\big)=2a-(2b-x)=x+2(a-b).
\]
Hence $A+2(a-b)\subseteq A$ for all $a,b\in A$. Interchanging $a$ and $b$ we obtain $A + 2(b-a) \subseteq A$ which is equivalent to $A \subseteq A +2(a-b)$. Hence we have $A = A +2(a-b)$, that is, $A$ is invariant under translation by every element of the subgroup $\langle 2(a-b) : a,b \in A \rangle \leq \ZZ$.

\emph{Step 4.} Let $d\ge0$ be the non-negative generator of the subgroup $\langle A\rangle\le\ZZ$. Since $0\in A$, every $a\in A$ is itself a difference $a-0$ of elements of $A$, so $\langle a-b: a,b\in A\rangle=\langle A\rangle=d\ZZ$. Hence we can write $d = a-b$ for $a,b \in A$. Since $0 \in A$, we have $0 + 2(a-b) = 2d \in A$ by Step~3. Hence we have $2d\ZZ \subseteq A$. Since $A\subseteq\langle A\rangle$ we obtain
\[
2d\ZZ\ \subseteq\ A\ \subseteq\ d\ZZ.
\]

\emph{Step 5.} Suppose first $d>0$. The quotient $d\ZZ/2d\ZZ$ has exactly two elements, so a subset of $d\ZZ$ containing $2d\ZZ$ is either $2d\ZZ$ or all of $d\ZZ$. The first case is impossible since it would give $\langle A\rangle=2d\ZZ$, whereas $\langle A\rangle=d\ZZ$ by the choice of $d$, and $2d\ZZ\ne d\ZZ$ for $d>0$. Hence $A=d\ZZ$. If $d=0$ then $\langle A\rangle=\{0\}$ forces $A=\{0\}=d\ZZ$ as well. In both cases $A=d\ZZ$, which completes the proof.
\end{proof}

\begin{remark}\label{rem:e0}
The case $e_\gamma=0$ occurs precisely when $W'$ contains exactly one reflection with
direction $\gamma$.
\end{remark}

The following theorem will be important in Section~\ref{ch:main}.

\begin{theorem}\label{thm:structure}
Let $W'\le\Sa_N$ be a reflection subgroup with $\lin(W')=S_N$. Then there are an integer $e\ge0$
and a vector $\nu\in\ZZ^N$ such that
\[
T\cap W'=\{\,s_{\gamma,k}\ :\ \gamma\in\Phi,\ k\equiv(\nu|\gamma)\ (\mathrm{mod}\ e)\,\}
\qquad\text{and}\qquad
W'=t_\nu\,\Gamma_e\,t_\nu^{-1},
\]
where
\[
\Gamma_e:=\{\,t_\mu u\ :\ \mu\in eQ,\ u\in S_N\,\}\ \le\ \Sa_N .
\]
Moreover
\[
\{\,\mu\in Q\ :\ t_\mu\in W'\,\}=eQ ,
\]
and both $e$ and the homomorphism $\gamma\mapsto(\nu|\gamma)\bmod e$ are uniquely determined
by $W'$.
\end{theorem}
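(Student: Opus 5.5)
Since $\lin(W')=S_N$, every transposition $(i\,j)$ is the linear part of some element of $W'$. Because $W'$ is generated by its reflections, each transposition in $S_N$ is a product of linear parts of reflections of $W'$. The first step is to show that $K_\gamma(W')\neq\emptyset$ for every $\gamma\in\Phi$. The graph on $\{1,\dots,N\}$ whose edges are the directions of reflections in $W'$ generates $S_N$, so it is connected. If $s_{e_i-e_j,k},s_{e_j-e_l,m}\in W'$, then Lemma~\ref{lem:conj}(ii) shows that conjugating one by the other gives a reflection of direction $e_i-e_l$ in $W'$. Hence the set of directions is closed in the sense that connected implies complete, and every $K_\gamma(W')$ is nonempty. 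By Lemma~\ref{lem:levelcoset}, $K_\gamma=c_\gamma+e_\gamma\ZZ$.

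Next I show that all $e_\gamma$ coincide. Take $\alpha,\beta$ with $(\alpha|\beta)=-1$, say $\alpha=e_i-e_j$ and $\beta=e_j-e_l$. Conjugating $s_{\beta,l}$ by $s_{\alpha,k}$ gives $s_{\alpha+\beta,l+k}$, and this map is a bijection between the level sets modulo the obvious relations. As $k$ ranges over $K_\alpha$ and $l$ over $K_\beta$, we get $K_\alpha+K_\beta\subseteq K_{\alpha+\beta}$. The symmetric statements with the other pairs of the triangle $\{i,j,l\}$ give $K_{\alpha+\beta}-K_\beta\subseteq K_\alpha$, and so on. Comparing the periods yields $e_\alpha\ZZ+e_\beta\ZZ\subseteq e_{\alpha+\beta}\ZZ$ and cyclic versions, which forces $e_\alpha=e_\beta=e_{\alpha+\beta}$. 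Since any two roots are linked by such triangles (or are orthogonal with a common neighbour), there is a single $e$. Moreover $c_{\alpha+\beta}\equiv c_\alpha+c_\beta \pmod e$, so $\gamma\mapsto c_\gamma\bmod e$ is additive on $\Phi$. It therefore extends to a homomorphism $Q\to\ZZ/e$. This homomorphism lifts to $\nu\in\ZZ^N$: choose $\nu_1=0$ and $\nu_i=c_{e_i-e_1}$. Then $(\nu|\gamma)\equiv c_\gamma$. This gives the description of $T\cap W'$.

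For the group, I conjugate by $t_{-\nu}$, which preserves $\Sa_N$ and maps reflections to reflections (Lemma~\ref{lem:conj}(i)). By Lemma~\ref{lem:conj}(i) the reflections of $t_{-\nu}W't_\nu$ are exactly the $s_{\gamma,k}$ with $e\mid k$. These are exactly the reflections of $\Gamma_e$, since $s_{\gamma,k}=t_{k\gamma}s_\gamma$ and $k\gamma\in eQ$ iff $e\mid k$. The group $\Gamma_e$ is generated by its reflections: the $s_\gamma$ generate $S_N$, and $s_{\gamma,e}s_\gamma=t_{e\gamma}$ generate $eQ$. Both groups are reflection subgroups with the same reflections, so they are equal. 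Finally, $\{\mu:t_\mu\in\Gamma_e\}=eQ$, and conjugation by $t_\nu$ fixes translations, which gives the translation statement. Uniqueness follows because $e$ is read off from $\{\mu:t_\mu\in W'\}=eQ$ (for $e=0$ this set is $0$). The residue $(\nu|\gamma)\bmod e$ equals $K_\gamma\bmod e$, which is intrinsic to $W'$.

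The main obstacle is the second step: showing rigorously that all the $e_\gamma$ coincide and that the $c_\gamma$ are additive. This is where the case $e=0$ and the orthogonal roots need care. I would handle orthogonal pairs through a third root sharing an index with each. The case $e=0$ I would treat uniformly: the level sets are then singletons, and additivity holds on the nose.
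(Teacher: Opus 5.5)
Your proof is correct and has the same structure as the paper's: every direction occurs, the level sets are cosets with a common period $e$, the offsets $c_\gamma \bmod e$ come from a homomorphism lifted to $\nu\in\ZZ^N$, conjugation by $t_\nu$ identifies $W'$ with $\Gamma_e$ (two reflection subgroups with the same reflections), and $e$ is read off from the translations. The only difference is local: you get the common period and the additivity of offsets from the triangle inclusions $K_\alpha+K_\beta\subseteq K_{\alpha+\beta}$ and their cyclic versions, then lift via the base point $\nu_i=c_{e_i-e_1}$, whereas the paper uses the exact identity $K_{s_\alpha(\gamma)}=K_\gamma-(\gamma|\alpha)k$, transitivity of $S_N$ on $\Phi$, and an induction over simple reflections to build $\psi$. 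Both work; yours is slightly more concrete and specific to type $A$.
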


Before proving the theorem, we want to add some comments.

\begin{remark} \label{rem:BeforeProof44}
$\phantom{0}$
\begin{enumerate}[label=\textup{(\roman*)}]
\item $\Gamma_e$ is indeed a subgroup of $\Sa_N$. To see this, note that $(t_\mu u)(t_{\mu'}u')=t_{\mu+u(\mu')}(uu')$. Hence it suffices that $u(eQ)=eQ$ for every $u\in S_N$,
which holds because $S_N$ permutes the coordinates and hence maps $Q$ to $Q$. For $e=1$ we get
$\Gamma_1=\Sa_N$, and for $e=0$ we get $\Gamma_0=S_N$.

\item The conjugating element $t_\nu$ need not lie in $\Sa_N$, but in the larger
group $\Sa_N^{+}=S_N\ltimes\ZZ^N$. See the discussion after Lemma~\ref{lem:winding}.

\item The last equality can be reformulated as $W'\cap\mathcal T=\iota(eQ)$.
\end{enumerate}
\end{remark}

\begin{proof}[Proof of Theorem \ref{thm:structure}]
$\phantom{0}$

\emph{Step 1.} Let
\[
D=\{\gamma\in\Phi\ :\ K_\gamma(W')\ne\emptyset\}
\]
be the set of directions of the reflections contained in $W'$. We first show $D=\Phi$. Since $\lin(W')=S_N\ne1$ the group $W'$ is nontrivial, and being a reflection subgroup it contains a reflection, so $D\ne\emptyset$. 

If $s_{\alpha,k},s_{\gamma,l}\in W'$ then also 
\begin{equation}\label{eq:levelconj2}
s_{\alpha,k}s_{\gamma,l}s_{\alpha,k} = s_{s_{\alpha}(\gamma), l-(\gamma \mid \alpha)k}\in W'
\end{equation}
by Lemma~\ref{lem:conj}(ii). Hence $s_{\alpha}(D) \subseteq D$. Applying $s_{\alpha}$ to this inclusion yields the other inclusion, thus $s_{\alpha}(D) = D$.

Finally, $W'$ is generated by the reflections it contains, so $\lin(W')$ is generated by their linear parts, that is, $S_N = \lin(W')=\langle s_\alpha:\alpha\in D\rangle$. Thus $D$ is a nonempty subset of $\Phi$ stable under all reflections $s_{\alpha}$ with $\alpha \in D$. But since these reflections generate $S_N$, $D$ is stable under $S_N$. Since $S_N$ acts transitively, we conclude $D=\Phi$.

By Lemma~\ref{lem:levelcoset} we may therefore write $K_\gamma(W')=c_\gamma+e_\gamma\ZZ$ for every $\gamma\in\Phi$. Let $s_{\alpha,k}\in W'$. Conjugating the reflections with
direction $\gamma$ by $s_{\alpha,k}$ yields by \eqref{eq:levelconj2} that $l -(\gamma \mid \alpha) k \in K_{s_{\alpha}(\gamma)}(W')$. Since $l \in K_{\gamma}(W')$ we obtain
\begin{equation}\label{eq:levelconj}
K_{s_\alpha(\gamma)}(W')\supseteq K_\gamma(W')-(\gamma|\alpha)\,k,
\end{equation}
Likewise, we have
\begin{align*}
    s_{\alpha,k}s_{s_{\alpha}(\gamma),l}s_{\alpha,k} = s_{\gamma, l+(\alpha \mid \gamma)k} ~\Rightarrow ~ l+(\alpha \mid \gamma)k \in K_{\gamma}(W').
\end{align*}
This gives the reverse inclusion of \eqref{eq:levelconj} and so \eqref{eq:levelconj} is an equality. We therefore have
\begin{equation}\label{eq:levelconj3}
c_{s_{\alpha}(\gamma)} + e_{s_{\alpha}(\gamma)}\ZZ = K_{s_{\alpha}(\gamma)}(W')= K_{\gamma}(W') - (\gamma \mid \alpha) k = c_{\gamma} + e_{\gamma} \ZZ - (\gamma \mid \alpha) k.
\end{equation}
Comparing these cosets yields $e_{s_\alpha(\gamma)}=e_\gamma$. Since all roots lie in one $S_N$-orbit, as just noted, and since $\lin(W')=S_N$, the number $e:=e_\gamma$ does not depend on $\gamma$.

Both cases $e=0$ and $e\ge1$ genuinely occur. By Lemma~\ref{lem:levelcoset} and Remark~\ref{rem:e0}, $e=0$ means that $W'$ contains exactly one
reflection per direction (which happens for instance for $W'=S_N$). Conversely,
\begin{equation}\label{eq:egeq1}
e\ge1\iff W'\ \text{contains two reflections with the same direction and different levels.}
\end{equation}

\emph{Step 2.} Since $s_{\alpha, k} \in W'$, by Lemma \ref{lem:levelcoset} we have 
\[
k \in K_{\alpha}(W') = c_{\alpha} + e_{\alpha} \ZZ = c_{\alpha} + e \ZZ. 
\]
Hence $k \equiv c_\alpha \pmod e$. Considering \eqref{eq:levelconj3} modulo $e$ therefore gives for all $\alpha$ and $\gamma$
\begin{equation}\label{eq:offsets}
c_{s_\alpha(\gamma)} \equiv c_\gamma-(\gamma \mid \alpha)\,c_\alpha \pmod e .
\end{equation}

Fix the simple roots $\alpha_i=e_i-e_{i+1}$, $i=1,\dots,N-1$. They form a $\ZZ$-basis of $Q$, so
there is a unique group homomorphism
\[
\psi:Q\longrightarrow\ZZ/e\ZZ,\qquad \psi(\alpha_i)=c_{\alpha_i}\bmod e .
\]
We claim $\psi(\gamma)\equiv c_\gamma$ for every root $\gamma$. Every root can be written as $\gamma=s_{\alpha_{i_1}}\cdots s_{\alpha_{i_k}}(\alpha_j)$, and we induct on $k$. For $k=0$ this is the definition of $\psi$. For the induction step write $\gamma=s_\alpha(\gamma')$ with $\alpha$ a simple root and $\gamma'$ satisfying the claim. Then we have
\[
c_\gamma=c_{s_\alpha(\gamma')} \stackrel{\eqref{eq:offsets}}{\equiv} c_{\gamma'}-(\gamma'|\alpha)c_\alpha \stackrel{\text{(induction)}}{\equiv} \psi(\gamma')-(\gamma'|\alpha)\psi(\alpha)=\psi\big(\gamma'-(\gamma'|\alpha)\alpha\big)
=\psi(s_{\alpha}(\gamma')) = \psi(\gamma)
\]

\emph{Step 3.} We construct $\nu\in\ZZ^N$ with $(\nu \mid  \gamma)\equiv\psi(\gamma)\pmod e$ for all $\gamma\in\Phi$. Choose integers $b_1,\dots,b_{N-1}$ representing the classes $\psi(\alpha_i)\in\ZZ/e\ZZ$ and define $\nu\in\ZZ^N$ by
\[
\nu_N=0,\qquad \nu_i=b_i+b_{i+1}+\dots+b_{N-1}\quad(1\le i\le N-1),
\]
so that $\nu_i-\nu_{i+1}=b_i$ for all $i$. Then $(\nu \mid \alpha_i)=\nu_i-\nu_{i+1}=b_i\equiv\psi(\alpha_i)$, and since both $\gamma\mapsto(\nu|\gamma)\bmod e$ and $\psi$ are homomorphisms on $Q$ agreeing on the basis $\alpha_1,\dots,\alpha_{N-1}$, they agree on all of $Q$, in particular on all roots. 

Combining with Step~1 and Step~2, we obtain
\begin{align*}
T' := T\cap W' = \{ s_{\gamma,k} : \gamma \in \Phi, ~k \in K_{\gamma}(W')\} & = \{ s_{\gamma,k} : \gamma \in \Phi, ~k \equiv c_{\gamma} ~\text{mod } e\}\\
& = \{ s_{\gamma,k} : \gamma \in \Phi, ~k \equiv \psi(\gamma) ~\text{mod } e\}\\
& = \{s_{\gamma,k}\ :\ k\equiv(\nu \mid \gamma) ~\text{mod } e\}.
\end{align*}

\emph{Step 4.} By Lemma~\ref{lem:conj}(i), conjugation
by $t_\nu^{-1}=t_{-\nu}$ preserves the direction of a reflection and lowers its level by $(\nu \mid \gamma)$. Applying this to $T'$ yields
\[
t_\nu^{-1}\,T'\,t_\nu
=\big\{\,s_{\gamma,\;k-(\nu|\gamma)}\ :\ k\equiv(\nu \mid \gamma) ~\text{mod } e\,\big\}
=\{\,s_{\gamma,l}\ :\ \gamma\in\Phi,\ e\mid l\,\}.
\]
Note that the translation vector $\nu$ lies in $\ZZ^N$ (see part (ii) of Remark \ref{rem:BeforeProof44}).

Since conjugation by $t_\nu^{-1}$ is an automorphism of $\Sa_N$, it commutes with forming
generated subgroups, i.e.\ $t_\nu^{-1}\langle T'\rangle t_\nu=\langle t_\nu^{-1}T't_\nu\rangle$
and $W'=\langle T\cap W'\rangle=\langle T'\rangle$ because $W'$ is a reflection subgroup. Hence we have 
\[
t_\nu^{-1} W' t_\nu = \langle t_\nu^{-1} T' t_\nu \rangle = \big\langle\, s_{\gamma,l}\ :\ \gamma\in\Phi,\ e\mid l \,\big\rangle
\]
It therefore suffices to prove
\[
\big\langle\, s_{\gamma,l}\ :\ \gamma\in\Phi,\ e\mid l \,\big\rangle=\Gamma_e ,
\]
for then $t_\nu^{-1}W't_\nu=\Gamma_e$, that is, $W'=t_\nu\Gamma_e t_\nu^{-1}$.

``$\subseteq$'': $s_{\gamma,ek'}=t_{ek'\gamma}s_\gamma$ with $ek'\gamma\in eQ$, so each generator lies in $\Gamma_e$. 

``$\supseteq$'': the left-hand side contains every $s_\gamma=s_{\gamma,0}$,
hence contains $S_N$, and by Lemma~\ref{lem:pairs} it contains
$s_{\gamma,e}s_{\gamma,0}=t_{e\gamma}$ for every root $\gamma$. As the roots generate $Q$, the
translations $t_{e\gamma}$ generate $\iota(eQ)$. Products of these give all of $\Gamma_e$.

\emph{Step 5.} We have
\[
W'\cap\mathcal T=t_\nu\big(\Gamma_e\cap\mathcal T\big)t_\nu^{-1}=\Gamma_e\cap\mathcal T=\iota(eQ),
\]
where the first equality is Step~4 and the second equality is due to the fact that each element in $\Gamma_e\cap\mathcal T$ is a translation and therefore commutes with $t_\nu$. By using that $\iota$ is injective, we obtain the asserted equality $\{\mu\in Q: t_\mu\in W'\}=eQ$. In particular $e$ is determined by $W'$, and so is the homomorphism $\gamma\mapsto(\nu|\gamma)\bmod e$, by the description of $T\cap W'$ in Step~3.
\end{proof}

\begin{remark}\label{rem:glide}
An element of $\Sa_N$ whose linear part is a reflection need not itself be a reflection: $t_\mu s_\gamma$ is a reflection precisely when $\mu\in\RR\gamma$, and otherwise it is a glide reflection of infinite order. For example, consider the element $t_{e_1-e_2}s_{e_3-e_4,0}$, whose linear part is the transposition $(3\,4)$ but whose reflection length is $3$.
\end{remark}

\subsection{About the root lattice in type $A$}\label{sec:tree}

\begin{lemma}\label{lem:tree}
Let $R$ be a set of roots in the root system $\Phi$ of type $A_{N-1}$ whose $\RR$-span has dimension $N-1$.
Then $\ZZ R=Q$ and $\langle s_\gamma:\gamma\in R\rangle=S_N$.
\end{lemma}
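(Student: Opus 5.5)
The plan is to pass to the direction graph on $\{1,\dots,N\}$ introduced in Remark~\ref{rem:graphs}, with an edge $\{i,j\}$ for each $\pm(e_i-e_j)\in R$, and to show that the hypothesis $\dim\operatorname{span}_\RR R=N-1$ forces this graph to be connected. Both conclusions then follow from connectivity.

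\emph{Step 1 (connectivity).} Suppose the graph has components $B_1,\dots,B_r$. Every root $e_i-e_j\in R$ has $i,j$ in a common component $B_s$. Hence it lies in the subspace
\[
V=\Big\{x\in\RR^N:\ \sum_{i\in B_s}x_i=0\ \text{for all } s\Big\}.
\]
This subspace has dimension $N-r$. The hypothesis therefore gives $N-1\le N-r$, so $r=1$ and the graph is connected.

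\emph{Step 2 ($\ZZ R=Q$).} The inclusion $\ZZ R\subseteq Q$ is clear. For the reverse inclusion, it suffices to show that every simple root $e_a-e_b$ lies in $\ZZ R$, since these roots span $Q$ over $\ZZ$. By Step 1 there is a path $a=i_0,i_1,\dots,i_k=b$ in the graph. Then
\[
e_a-e_b=\sum_{l}(e_{i_{l-1}}-e_{i_l}),
\]
and each summand is $\pm$ an element of $R$.

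\emph{Step 3 ($\langle s_\gamma\rangle=S_N$).} The $s_\gamma$ are the transpositions $(i\,j)$ corresponding to the edges of the graph. Along the same path as in Step 2, conjugating gives
\[
(a\,b)=(i_0\,i_1)(i_1\,i_2)\cdots(i_{k-1}\,i_k)\cdots(i_1\,i_2)(i_0\,i_1).
\]
So every transposition lies in the generated subgroup, and transpositions generate $S_N$.

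The only real content is Step 1, the dimension count for $V$. Here one notes that the $r$ linear forms $\sum_{i\in B_s}x_i$ have disjoint supports and are therefore linearly independent, which gives $\dim V=N-r$. This step is straightforward, so I expect no genuine obstacle.
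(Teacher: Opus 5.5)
Your proof is correct, but it takes a different route from the paper's.

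The paper argues at the level of reflection subgroups. It notes that $\langle s_\gamma:\gamma\in R\rangle$ has rank $N-1$, and that in type $A_{N-1}$ the only reflection subgroup of that rank is $S_N$. It then deduces $\ZZ R=Q$ from this group-generation statement by citing \cite[Proposition~5.10]{BGRW17}.

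You instead work directly with the combinatorics. The count $\dim V=N-r$ shows that the direction graph of $R$ is connected. Connectivity then gives both conclusions independently: telescoping sums along paths give the lattice, and conjugating transpositions along paths gives the group.

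What your version buys:
\begin{itemize}
\item It is self-contained and avoids the external proposition.
\item It in effect proves the fact about full-rank reflection subgroups that the paper quotes. Subgroups of $S_N$ generated by transpositions are the Young subgroups $S_{B_1}\times\dots\times S_{B_r}$ of the graph components, and these have rank $N-r$.
\item It matches how the lemma is used later in the paper, namely for trees and spanning trees of directions.
\end{itemize}

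What the paper's version buys: it is shorter, and it isolates the type-specific input, that full-rank reflection subgroups are the whole group. That is exactly what fails in type $D_4$, as Remark~\ref{rem:D4} illustrates.

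Two small precision points:
\begin{itemize}
\item In Step 2 you mean an arbitrary root $e_a-e_b$, not a simple root. The argument is unaffected.
\item In Step 3 the displayed conjugation formula yields $(a\,b)$ only if the path has pairwise distinct vertices. Otherwise the conjugating element may move $i_k$. Taking a shortest path ensures this.
\end{itemize}
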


\begin{proof}
The reflection subgroup $W_R := \langle s_\gamma:\gamma\in R\rangle$ is of rank $N-1$. But in type $A_{N_1}$ the only reflection subgroup of rank $N-1$ is $W_{\Phi}$ itself. Hence we have $W_R = S_N$. By \cite[Proposition~5.10]{BGRW17} this also implies $\ZZ R=Q$.
\end{proof}

\begin{lemma}[Levels and translations]\label{lem:levels}
Let $\gamma_1,\dots,\gamma_m\in\Phi$ be linearly independent roots, let $k_1,\dots,k_m\in\ZZ$, and
put
\[
v_j=\big(s_{\gamma_1}s_{\gamma_2}\cdots s_{\gamma_{j-1}}\big)(\gamma_j)\qquad(1\le j\le m),
\]
so that $v_1=\gamma_1$. Then:
\begin{enumerate}[label=\textup{(\roman*)}]
\item $\displaystyle s_{\gamma_1,k_1}s_{\gamma_2,k_2}\cdots s_{\gamma_m,k_m}
=t_{\,\sum_{j=1}^m k_jv_j}\ s_{\gamma_1}s_{\gamma_2}\cdots s_{\gamma_m}$;
\item $v_j\in\gamma_j+\ZZ\gamma_1+\dots+\ZZ\gamma_{j-1}$ for $j>1$;
\item  $v_1,\dots,v_m$ is a $\ZZ$-basis of the lattice $M:=\ZZ\gamma_1+\dots+\ZZ\gamma_m$;
\item the map $\ZZ^m\to M$, $k\mapsto\sum_j k_jv_j$, is a bijection.
\end{enumerate}
\end{lemma}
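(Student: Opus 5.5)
We prove the four parts in order, each using the previous ones.

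For (i) we use induction on $m$. The case $m=1$ is Lemma~\ref{lem:reflset}: $s_{\gamma_1,k_1}=t_{k_1\gamma_1}s_{\gamma_1}$. For the induction step, set $u_{m-1}=s_{\gamma_1}\cdots s_{\gamma_{m-1}}$. By induction the product of the first $m-1$ factors is $t_{\lambda}u_{m-1}$ with $\lambda=\sum_{j<m}k_jv_j$. The multiplication rule $(t_\mu u)(t_\nu v)=t_{\mu+u(\nu)}uv$ then gives
\[
t_\lambda u_{m-1}\,t_{k_m\gamma_m}s_{\gamma_m}=t_{\lambda+k_m u_{m-1}(\gamma_m)}\,u_{m-1}s_{\gamma_m},
\]
and $u_{m-1}(\gamma_m)=v_m$ by definition. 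This step is purely formal and needs neither linear independence nor integrality.

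For (ii) we again use induction, this time peeling reflections off the left. Let $x\in\gamma_j+\ZZ\gamma_{i+1}+\dots+\ZZ\gamma_{j-1}$. Then $s_{\gamma_i}(x)=x-(x\mid\gamma_i)\gamma_i$, and $(x\mid\gamma_i)\in\ZZ$ because all roots have integer coordinates. Applying $s_{\gamma_{j-1}},s_{\gamma_{j-2}},\dots,s_{\gamma_1}$ successively to $\gamma_j$ therefore keeps the vector in $\gamma_j+\ZZ\gamma_1+\dots+\ZZ\gamma_{j-1}$, which proves (ii).

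For (iii), part (ii) says that the matrix expressing $v_1,\dots,v_m$ in terms of $\gamma_1,\dots,\gamma_m$ is unitriangular with integer entries. Hence it is invertible over $\ZZ$, and $v_1,\dots,v_m$ span the same lattice $M$ as the $\gamma_j$. Linear independence of the $\gamma_j$ gives linear independence of the $v_j$, so they form a $\ZZ$-basis of $M$. Part (iv) is a restatement of (iii): the map $k\mapsto\sum_jk_jv_j$ is the coordinate isomorphism $\ZZ^m\to M$ for this basis, and injectivity uses linear independence.

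The only point needing care is the bookkeeping in (ii). One has to apply the reflections in the correct order, innermost $s_{\gamma_{j-1}}$ first, and check that each intermediate vector stays in the stated coset. This is the step I would write out most carefully. Everything else is routine.
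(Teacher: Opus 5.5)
Your proof is correct and follows the paper's argument: (ii) comes from $s_{\gamma_i}(x)=x-(x\mid\gamma_i)\gamma_i$ with $(x\mid\gamma_i)\in\ZZ$, (iii) from the unitriangular integer change of basis, and (iv) as the resulting coordinate isomorphism. The only difference is in (i): the paper does not prove it but cites \cite[Lemma~2.11]{Weg20}, whereas your induction via $s_{\gamma,k}=t_{k\gamma}s_\gamma$ and $(t_\mu u)(t_\nu v)=t_{\mu+u(\nu)}uv$ makes the lemma self-contained, and your handling of (ii) for an arbitrary lattice vector rather than just a root is slightly more careful than the paper's one-liner.
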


\begin{proof}
$\phantom{0}$

(i) This is \cite[Lemma~2.11]{Weg20}.

(ii) Since $(\gamma_i \mid \gamma_k) \in \ZZ$, this is a direct consequence of 
$$
s_{\gamma_k}(\gamma_i) = \gamma_i -(\gamma_i \mid \gamma_k) \gamma_k \in \gamma_i + \ZZ \gamma_k. 
$$

(iii) is a direct consequence of (ii), while (iv) is a direct consequence of (iii).
\end{proof}

\begin{remark}\label{rem:D4}
Lemma~\ref{lem:tree} is special to type $A$, and does in general not generalise to other types. The reason for this is that, in general, for a root system $\Phi$ there exists a proper subsystem $\Psi$ of $\Phi$ of the same rank as $\Phi$. Consider the root system of type $D_4$, that is,
\[
\Phi(D_4)=\{\pm e_i\pm e_j\ :\ 1\le i<j\le4\},\qquad Q(D_4)=\Big\{x\in\ZZ^4:\sum_i x_i\ \text{even}\Big\},
\]
and take the four roots $R=\{e_1-e_2,\ e_1+e_2,\ e_3-e_4,\ e_3+e_4\}$, which are pairwise orthogonal and span $\RR^4$. Then $\ZZ R$ is a proper sublattice of $Q(D_4)$ of index $2$ (for instance $e_1+e_3\in Q(D_4)$ but
$e_1+e_3\notin\ZZ R$), and the group generated by the four corresponding reflections is a proper subgroup of the Weyl group $W(D_4)$.
\end{remark}

\section{Reflection length in the affine symmetric group}\label{ch:length}

In this section we prove Theorem~A. The lower bound comes from a classical theorem about
factorizations of permutations, which we now recall.

\begin{theorem}[{Hurwitz \cite{Hur91}}]\label{thm:genus}
Let $M\ge1$, let $u\in S_M$ have $c(u)$ cycles, and let $\tau_1,\dots,\tau_k$ be transpositions
with
\[
\tau_1\tau_2\cdots\tau_k=u\qquad\text{and}\qquad\langle\tau_1,\dots,\tau_k\rangle = S_M.
\]
Then $k\ge M+c(u)-2$, and this bound is attained.
\end{theorem}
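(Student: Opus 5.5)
We prove the classical bound by induction on the number $k$ of transpositions, via the standard genus-type bookkeeping. We track two quantities along the partial products $u_j=\tau_1\cdots\tau_j$: the number of cycles $c(u_j)$, and the number $\kappa_j$ of connected components of the graph on $\{1,\dots,M\}$ with edges $\tau_1,\dots,\tau_j$ (in the sense of Remark~\ref{rem:graphs}). Initially $u_0=1$, so $c(u_0)=M$ and $\kappa_0=M$.

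\textbf{Key step.} We show that $\Delta_j:=c(u_j)-c(u_{j-1})$ and $\kappa_j-\kappa_{j-1}$ satisfy
\[
\bigl(c(u_j)-\kappa_j\bigr)-\bigl(c(u_{j-1})-\kappa_{j-1}\bigr)\ \ge\ -1\ \ \text{when the components do not merge, and}\ =0\ \text{when they merge.}
\]
This rests on the standard fact that multiplying a permutation by a transposition $(a\,b)$ changes the number of cycles by exactly $\pm1$. It is $-1$ (a join) if $a,b$ lie in different cycles of $u_{j-1}$, and $+1$ (a cut) if they lie in the same cycle.

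\textbf{Case analysis.} If $\tau_j=(a\,b)$ connects two different graph components, then $a,b$ are necessarily in different cycles of $u_{j-1}$, since cycles of $u_{j-1}$ lie inside components. So the step is a join: $c$ drops by $1$ and $\kappa$ drops by $1$. If $a,b$ are already in the same component, then $\kappa$ is unchanged and $c$ changes by $\pm1$.

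\textbf{Counting.} Since $\langle\tau_1,\dots,\tau_k\rangle=S_M$, the final graph is connected, so $\kappa_k=1$. Exactly $M-1$ steps are merging steps, each with $\Delta=-1$. The remaining $k-(M-1)$ steps each have $\Delta\ge -1$. Hence
\[
c(u)-M\ \ge\ -(M-1)-(k-M+1),
\]
that is, $c(u)\ge M-k$. This alone is too weak, so we refine it using parity: the number of $+1$ steps, say $p$, and $-1$ steps among the non-merging ones, say $r$, satisfy $p+r=k-M+1$ and $c(u)=M-(M-1)+p-r=1+p-r$. Thus $r=p+1-c(u)\ge 0$, and $k=M-1+p+r\ge M-1+r$. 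I expect the main obstacle to be exactly here, since this route alone does not give $k\ge M+c(u)-2$.

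\textbf{Resolving the obstacle.} I would therefore use the reverse reading, or equivalently Euler's formula for the associated branched cover (Riemann--Hurwitz). Consider $\tau_k\cdots\tau_1 u^{-1}$-style counting, i.e. the factorization $\tau_1\cdots\tau_k u^{-1}=1$ with transitive monodromy. Riemann--Hurwitz for a connected degree-$M$ cover of $\mathbb P^1$ with branch data $k$ transpositions and $u^{-1}$ gives
\[
2-2g=2M-k-(M-c(u)),
\]
so $k=M+c(u)-2+2g\ge M+c(u)-2$. A purely combinatorial substitute is to prove $k\ge (M-1)+(c(u)-1)$ directly. Run the count in reverse: starting from $u$, right-multiply by $\tau_k,\tau_{k-1},\dots$ to reach the identity. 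By the symmetric argument, the merging steps counted from the other end force at least $c(u)-1$ joins among the non-tree steps, i.e. $r\ge c(u)-1$. Combined with $M-1$ tree steps this yields $k\ge M+c(u)-2$; the care needed is to check that these two sets of steps can be chosen disjoint.

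\textbf{Attainment.} For attainment, choose for each cycle $(a_1\,\dots\,a_\ell)$ of $u$ the factorization $(a_1a_2)(a_2a_3)\cdots(a_{\ell-1}a_\ell)$, using $\sum(\ell_i-1)=M-c(u)$ transpositions. Then connect the $c(u)$ cycles by $c(u)-1$ pairs $\tau,\tau$ of a transposition joining consecutive cycles, placed so the product is unchanged. The total is $M-c(u)+2(c(u)-1)=M+c(u)-2$, and the transpositions form a connected graph, hence generate $S_M$.
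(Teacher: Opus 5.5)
\textbf{Your count already finishes the proof.} Your cut/join bookkeeping proves the bound; the ``obstacle'' is an algebra slip, not a limitation of the method. From $p+r=k-M+1$ and $c(u)=1+p-r$ you solved for $r$ and then dropped $p$, which discards exactly the information you need. Eliminate $p$ instead: $p=r+c(u)-1$, hence
\[
k=(M-1)+p+r=M+c(u)-2+2r\ \ge\ M+c(u)-2 ,
\]
with equality if and only if no non-merging step is a join. Your $r$ is precisely the genus $g$ in your Riemann--Hurwitz formula, and you also get $k\equiv M+c(u)\pmod 2$.

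The paper gives no proof of this statement; it only refers to Hurwitz and to textbook treatments. So this count is a self-contained elementary proof. Your Riemann--Hurwitz paragraph is also correct, but it needs Riemann's existence theorem to produce a cover with monodromy $\tau_1,\dots,\tau_k,u^{-1}$. That cover is connected because $\langle\tau_1,\dots,\tau_k\rangle$ is transitive. This route is much heavier than necessary.

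\textbf{The ``purely combinatorial substitute'' is wrong as written.} The inequality $r\ge c(u)-1$ is false. Take $M=2$, $u=1$ and $(\tau_1,\tau_2)=((1\,2),(1\,2))$. The first step is a merging join and the second is a cut, so $r=0<1=c(u)-1$, although the bound is attained.

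What the reverse reading actually gives is a statement about cuts. Start with the partition of $\{1,\dots,M\}$ into the cycles of $u$, and adjoin $\tau_k,\tau_{k-1},\dots,\tau_1$ in turn. The element $u_j=u\tau_k\cdots\tau_{j+1}$ preserves every block of the partition generated by the cycles of $u$ and $\tau_{j+1},\dots,\tau_k$. Because the $\tau$-graph is connected, there are exactly $c(u)-1$ steps at which $\tau_j$ merges two such blocks. At each of them $c(u_{j-1})=c(u_j\tau_j)=c(u_j)-1$, i.e.\ the step is a forward cut. Thus $p\ge c(u)-1$. Disjointness from the $M-1$ forward merging steps is automatic, since those steps are joins. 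This recovers the same conclusion as the display above.

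\textbf{Attainment.} Your attainment construction is fine. With right-to-left composition, $(a_1a_2)(a_2a_3)\cdots(a_{\ell-1}a_\ell)=(a_1\,a_2\,\dots\,a_\ell)$. It is essentially the paper's star--connector tuple from the proof of Theorem~\ref{thm:length}. The only difference is that your connector pairs link consecutive cycles, whereas the paper's form a star centred at $r_1$.
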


Proofs of the previous theorem can also be found in \cite[\S5.2]{LZ04} or \cite{GJ97}.

\begin{theorem}\label{thm:length}
Let $w\in\Sa_N$, let $c$ be the number of cycles of $\lin(w)$ and $q=q(w)$ the maximal number of
parts of a balanced partition. Then
\[
\ell_T(w)=N+c-2q .
\]
\end{theorem}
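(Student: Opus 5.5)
We prove the two inequalities $\ell_T(w)\ge N+c-2q$ and $\ell_T(w)\le N+c-2q$ separately. In both directions we first reduce to the irreducible case using Lemma~\ref{lem:blocks}.

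\emph{Lower bound.} Let $F\in\Red_T(w)$ have blocks $B_1,\dots,B_r$. By Lemma~\ref{lem:blocks}(ii),(iii), $w=w_1\cdots w_r$ with $w_s\in\Sa_{B_s}$, $\ell_T(w)=\sum_s|F_s|$, and $r\le q$. The direction graph of $F_s$ is connected on $B_s$. Hence the transpositions $\lin(t)$, $t\in F_s$, generate $S_{B_s}$, and their product is $\lin(w_s)$, which has $c_s$ cycles with $\sum_s c_s=c$. Theorem~\ref{thm:genus} gives $|F_s|\ge |B_s|+c_s-2$. Summing over $s$ yields $\ell_T(w)\ge N+c-2r\ge N+c-2q$. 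This direction is routine.

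\emph{Upper bound.} Choose a balanced partition with $q$ parts and let $B_1,\dots,B_q$ be the corresponding unions of cycles. It suffices to write $w=w_1\cdots w_q$ with $w_s\in\Sa_{B_s}$, $\lin(w_s)=u|_{B_s}$, and to factor each $w_s$ into $|B_s|+c_s-2$ reflections. The splitting exists: since $\sum_{i\in B_s}\mu_i=0$ for every part, the restriction $\mu|_{B_s}$ lies in $Q(B_s)$. This reduces the problem to a single part $B$ with winding sum $0$, say $B=\{1,\dots,M\}$ with $c'$ cycles, where we need a factorization of length $M+c'-2$.

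For this part, first take a minimal transitive factorization of $u|_B$ into $M+c'-2$ transpositions, as provided by Theorem~\ref{thm:genus}. A concrete choice is to factor each cycle into $|C|-1$ transpositions, which gives $M-c'$ of them, and then to insert $2(c'-1)$ transpositions that link the cycles along a tree, using pairs $(a\,b)(a\,b)$ with $a,b$ in different cycles. The direction roots $\gamma_1,\dots,\gamma_{M+c'-2}$ then span $Q(B)\otimes\RR$, but they are not linearly independent, so Lemma~\ref{lem:levels} does not apply verbatim.

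Next we choose the levels $k_j$ so that the translation part of $\prod_j s_{\gamma_j,k_j}$ equals $\mu|_B$. The product has translation part $\sum_j k_jv_j$, by the same computation as in Lemma~\ref{lem:levels}(i), which holds without the independence assumption. The image of $k\mapsto\sum_jk_jv_j$ is a sublattice of $Q(B)$, and this is the main obstacle: we must show that its image contains $\mu|_B$.

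Two ingredients handle this. First, the pair $s_{\alpha,a}s_{\alpha,b}=t_{(a-b)\alpha}$ from Lemma~\ref{lem:pairs} contributes any multiple of the linking root $\alpha=e_a-e_b$. This lets us move winding number between different cycles, so we can arrange the cycle sums to be $\omega_t$; this is possible exactly because they total $0$. Second, the reflections inside a single cycle are linearly independent, so by Lemma~\ref{lem:levels}(iii),(iv) they realize every vector in $Q(C_t)$. Combined with the conjugation invariance in Lemma~\ref{lem:winding}(i), only the cycle sums matter modulo $\operatorname{Im}(1-u)$.

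To make this rigorous, I would order the factors with the linking pairs first and then the cycle factorizations. I would then check that the resulting $v_j$ together span all of $Q(B)$. This should follow from Lemma~\ref{lem:tree}, applied to the independent subset consisting of one root from each pair together with the cycle roots, together with the extra pair contributions.
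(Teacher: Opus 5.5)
Your proposal is correct. The lower bound is the paper's argument verbatim. For the upper bound you use the same skeleton as the paper: split along a maximal balanced partition, factor each cycle along a tree, and link the cycles by doubled transpositions. The difference lies in how the levels are chosen.

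The paper puts the connector pairs \emph{last}, links along a star at $r_1$, and writes the levels down explicitly. The pairs $(s_{\gamma_t,d_t},s_{\gamma_t,0})$ with $d_t=-\omega_t$ multiply to $t_\nu$. Because they stand to the right of the cycle factors, one must correct for the twist by setting $\rho=\mu^B-u(\nu)$ and then check that its cycle sums vanish.

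You instead argue existentially that the lattice spanned by the $v_j$ is all of $Q(B)$, and putting the pairs first removes the twist. The step you leave as ``should follow'' is easier than you suggest and needs no ordering at all. The computation behind Lemma~\ref{lem:levels}(ii) uses only $(x\mid\gamma)\in\ZZ$, not linear independence. So for any sequence of roots, $v_j\in\gamma_j+\ZZ\gamma_1+\dots+\ZZ\gamma_{j-1}$, whence $\ZZ\{v_j\}=\ZZ\{\gamma_j\}$. The $\gamma_j$ include the edges of a spanning tree on $B$, so $\ZZ\{\gamma_j\}=Q(B)$ by Lemma~\ref{lem:tree}. Your detour through conjugation invariance and $\operatorname{Im}(1-u)$ is therefore unnecessary.

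Your route shows that \emph{every} transitive transposition factorization of $u|_B$, of any shape and in any order, lifts to a reflection factorization of $t_\lambda u|_B$ for each $\lambda\in Q(B)$. What the paper's explicit construction buys is the identification of the connector gaps as $-\omega_t$. That identification is exactly what is reused in Definition~\ref{def:Fy}, Theorem~\ref{thm:normalform}, and the orbit count via $g=\gcd(d_2,\dots,d_c)$.
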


\begin{proof}
We show that $N+c-2q$ is a lower and an upper bound for $\ell_T(w)$, where $w = t_{\mu}u$.

\emph{Lower bound.} Let $F\in\Red_T(w)$ with blocks $B_1,\dots,B_r$, and let $c_s$ be the number of cycles of $u$ contained in $B_s$. By Lemma~\ref{lem:blocks}(i) these numbers are well defined and $\sum_s|B_s|=N$, $\sum_sc_s=c$. Fix $s$ and apply $\lin$ to the subtuple $F_s$. We obtain a factorization of $\lin(w_s)\in S_{B_s}$ into $\ell_T(w_s)$ transpositions, all of whose directions are edges inside $B_s$. By construction $B_s$ is a connected component of the direction graph, so the corresponding transpositions generate the subgroup $S_{B_s}$ (see \cite[Lemma 3.10.1]{GR01}). Also
$\lin(w_s)$ has exactly $c_s$ cycles. Theorem~\ref{thm:genus} therefore gives
\[
\ell_T(w_s)\ \ge\ |B_s|+c_s-2 .
\]
Summing over $s$ gives
\[
\ell_T(w)\stackrel{\text{Lemma~\ref{lem:blocks}(ii)}}{=}\sum_{s=1}^r\ell_T(w_s)\ \ge\ \sum_{s=1}^r\big(|B_s|+c_s-2\big)=N+c-2r\ \stackrel{\text{Lemma~\ref{lem:blocks}(iii)}}{\ge}\ N+c-2q.
\]

\emph{Upper bound.} We construct a factorization of length $N+c-2q$. Fix a maximal balanced partition $\pi\in\mathcal P(w)$. It suffices to treat only one part $B\in\pi$ and to concatenate, since the factors belonging to different parts commute and the lengths add as $\sum_{B\in\pi}(|B|+\#B-2)=N+c-2q$.

So let $B\in\pi$ consist (after a possible renumbering) of the cycles $C_1,\dots,C_b$, and choose a representative $r_t\in C_t$ for each $t$. Let $\mu^{B}$ denote the restriction of $\mu$ to the index set $B$, i.e.\ the vector agreeing with $\mu$ on $B$ and
vanishing elsewhere. Since $\pi$ is balanced,
\[
\sum_{i\in B}\mu^{B}_i=\sum_{i\in B}\mu_i=\sum_{C_t\subseteq B}\omega_t=0 ,
\]
so $\mu^{B}\in Q(B)$ and $w_B:=t_{\mu^{B}}\,u|_{B}$ lies in $\Sa_B$. As $\mu=\sum_{B\in\pi}\mu^{B}$
and the factors act on disjoint sets of coordinates, $w=\prod_{B\in\pi}w_B$. We now factor $w_B$. Put
\[
\gamma_t=e_{r_1}-e_{r_t}\quad(2\le t\le b),\qquad d_t=-\omega_t,\qquad
\nu=\sum_{t=2}^{b}d_t\gamma_t .
\]
Set 
\begin{equation} \label{eq:DefRho}
\rho=\mu^{B}-u(\nu).
\end{equation}
For $2\le t\le b$ the vector $\nu$ has coordinate sum $-d_t$ on $C_t$ (only the term $d_t\gamma_t$ contributes, through the entry $-d_t$ at position $r_t$) and coordinate sum $\sum_{t\ge2}d_t$ on $C_1$. Since $u$ preserves each cycle, $u(\nu)$ has the same cycle sum as $\nu$, i.e. $\sum_{i \in C_t} (u(\nu))_i = \sum_{i \in C_t} \nu_i$ for every $t$. Hence for $2\le t\le b$
\[
\sum_{i\in C_t}\rho_i = \sum_{i\in C_t} (\mu^B)_i - \sum_{i\in C_t} (u(\nu))_i = \sum_{i\in C_t} \mu_i - \sum_{i\in C_t} \nu_i  = \omega_t -(-d_t)= \omega_t -\omega_t = 0 
\]
and for $t=1$, using $\sum_{t}\omega_t=0$,
\[
\sum_{i\in C_1}\rho_i=\omega_1-\sum_{t\ge2}d_t=\omega_1+\sum_{t\ge2}\omega_t=0 .
\]
So the restriction $\rho_t$ of $\rho$ to $C_t$ lies in $Q(C_t)$ for every $t$.

Now fix $t$ and consider $A_t:=t_{\rho_t}\,u|_{C_t}\in\Sa_{C_t}$. Choose $|C_t|-1$ transpositions inside $C_t$ whose direction graph is a tree and whose product is the cycle $u|_{C_t}$. Such a choice exists by Theorem~\ref{thm:genus} (with $M=|C_t|$, $c(u)=1$, so $k=|C_t|-1$). The roots corresponding to these transpositions are $|C_t|-1$ linearly independent roots spanning $Q(C_t)\otimes\RR$, hence generate $Q(C_t)$ by Lemma~\ref{lem:tree} applied inside $C_t$. We may therefore choose levels $k_1,\dots,k_{|C_t|-1}$ so that the corresponding reflections multiply to $A_t$. Indeed, by Lemma~\ref{lem:levels}(i) the product of the reflections with levels $k_j$ equals $t_{\sum_jk_jv_j}\,u|_{C_t}$, where $v_j$ is the root as defined in Lemma~\ref{lem:levels}. By part (iii) of that same Lemma the $v_j$ form a $\ZZ$-basis of the lattice generated by the roots, which is $Q(C_t)$, so by Lemma~\ref{lem:levels}(iv) there is a (unique) choice of $k \in \ZZ^{C_t}$ with $\sum_jk_jv_j=\rho_t$, giving the product $t_{\rho_t}u|_{C_t}=A_t$.

Finally, by Lemma~\ref{lem:pairs} the pair $\big(s_{\gamma_t,d_t},s_{\gamma_t,0}\big)$ has
product $t_{d_t\gamma_t}$. Concatenating everything, the tuple
\[
\Big(\underbrace{\text{$|C_1|-1$ reflections}}_{\text{product }A_1},\dots,
\underbrace{\text{$|C_b|-1$ reflections}}_{\text{product }A_b},\
\underbrace{\big(s_{\gamma_2,d_2},s_{\gamma_2,0}\big),\dots,\big(s_{\gamma_b,d_b},s_{\gamma_b,0}\big)}_{\text{product }t_{\sum_{t=2}^b d_t\gamma_t} = t_{\nu}}\Big)
\]
has length $\sum_t(|C_t|-1)+2(b-1)=|B|+b-2$ and product
\[
A_1\cdots A_b\cdot t_\nu=t_{\sum_t\rho_t}\,u|_B\ t_\nu=t_{\rho+u(\nu)}\,u|_B\stackrel{\eqref{eq:DefRho}}{=}t_{\mu^B}u|_B=w_B ,
\]
where we used that the $A_t$ act on disjoint sets of coordinates and the multiplication rule $(t_\rho v)(t_\nu)=t_{\rho+v(\nu)}v$.
\end{proof}

\begin{example}
Let $N=4$ and $w=t_\lambda$ with $\lambda=(2,-1,-1,0)$. By Example~\ref{ex:balanced}(1) we have $c=4$ and $q=2$, so $\ell_T(w)=4+4-4=4$.
\end{example}

\begin{remark}\label{rem:lmps}
Note that the formular for the reflection length of Theorem~\ref{thm:length} is precisely the formula of \cite[Theorem~4.25]{LMPS19}. Lewis, McCammond, Petersen and Schwer derive this result from a more general result computing the reflection length in an arbitrary affine Coxeter group \cite[Theorem~A]{LMPS19}. 

We nevertheless provide a proof of Theorem~\ref{thm:length}. First, since it is independent. Second, and more importantly for our purposes, the proof of the upper bound constructs an explicit reduced factorization which we will need in Section~\ref{ch:main}.
\end{remark}

\begin{corollary}\label{cor:elliptic}
If $w$ is of finite order, i.e.\ all winding numbers vanish, then $\ell_T(w)=N-c$, which is the reflection length of $\lin(w)$ in the finite symmetric group $S_N$.
\end{corollary}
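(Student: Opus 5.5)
This follows directly from Theorem~\ref{thm:length}. If $w$ has finite order, then by Lemma~\ref{lem:winding}(ii) every winding number $\omega_t$ vanishes. In that case every partition of the cycles $\{C_1,\dots,C_c\}$ is balanced. In particular the partition into singletons $\{C_1\},\dots,\{C_c\}$ is balanced, and no partition can have more than $c$ parts, so $q(w)=c$. Substituting into Theorem~\ref{thm:length} gives
\[
\ell_T(w)=N+c-2c=N-c .
\]

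We then identify $N-c$ with the reflection length of $u=\lin(w)$ in $S_N$, whose reflections are the transpositions. We use the classical fact that a permutation with $c$ cycles is a product of exactly $N-c$ transpositions and no fewer. The lower bound holds because multiplying by a transposition changes the number of cycles by exactly $\pm1$, and the identity has $N$ cycles. Alternatively, apply Theorem~\ref{thm:genus} to each connected component of the transposition graph, exactly as in the lower-bound argument of Theorem~\ref{thm:length} with all $c_s=1$ summed appropriately. The upper bound holds because each $k$-cycle is a product of $k-1$ transpositions.

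A more intrinsic consistency check is also available. Since $w$ is elliptic, it fixes a point, and conjugating by a suitable element reduces to a finite parabolic situation. This is not needed, however, since the formula already matches. The main point to get right is the observation $q(w)=c$, which is immediate. There is no real obstacle; the corollary is a direct specialization of Theorem~A.
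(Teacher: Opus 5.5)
Your argument is correct and matches the paper, which gives no separate proof and treats the corollary as the specialization of Theorem~\ref{thm:length}: vanishing winding numbers make the singleton partition balanced, so $q=c$ and $\ell_T(w)=N-c$, and $N-c$ is the classical transposition length of a permutation with $c$ cycles. One small slip in your optional second argument for that classical fact: the components of the transposition graph need not each contain a single cycle (so ``all $c_s=1$'' is not right), but summing Theorem~\ref{thm:genus} over the $r$ components still gives $N+c-2r\ge N-c$ because $r\le c$, so the conclusion stands.
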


\section{The normal form and the main theorem}\label{ch:main}

By Lemma~\ref{lem:blocks} we assume throughout Sections~\ref{sec:normalform} and \ref{sec:moves} that $w \in \Sa_N$ is irreducible in the sense of Definition~\ref{conv:irred}, i.e.\ $q(w)=1$. The general case will be considered in Section~\ref{sec:count}. Thus by Theorem~\ref{thm:length} we assume
\[
m:=\ell_T(w)=N+c-2 .
\]

\subsection{The star--connector normal form}\label{sec:normalform}

The plan is to use the theorem of Clebsch and Hurwitz to normalise the \emph{permutation}
data of a factorization, and then to see how much freedom is left in the \emph{levels}.

\begin{theorem}[{Hurwitz \cite{Hur91}, Kluitmann \cite[Theorem 1]{Klu88}}]\label{thm:CH}
Let $u\in S_M$ with $c(u)$ cycles. Among all factorizations of $u$ into transpositions that generate $S_M$, the minimal number of factors is $k=M+c(u)-2$, and the set of factorizations attaining this minimum forms a single Hurwitz orbit.
\end{theorem}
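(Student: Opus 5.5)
The statement just recorded is Theorem~\ref{thm:CH}, the classical Clebsch--Hurwitz transitivity result for minimal transitive factorizations in $S_M$. The lower bound $k\ge M+c(u)-2$ is Theorem~\ref{thm:genus}, so the plan concerns the single-orbit claim. I would argue by induction on $k=M+c(u)-2$. The induction reduces everything to a normal form: every minimal transitive factorization of $u$ can be moved by Hurwitz moves to one fixed canonical factorization. Fix a labelling of the cycles $C_1,\dots,C_c$ of $u$ with representatives $r_t\in C_t$. The canonical factorization first lists $|C_t|-1$ transpositions forming a fixed star tree on $C_t$ that multiplies to $u|_{C_t}$, for each $t$, and then the connector pairs $\big((r_1\,r_t),(r_1\,r_t)\big)$ for $2\le t\le c$. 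This mirrors the construction in the proof of Theorem~\ref{thm:length}.

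\emph{Step 1: structure of a minimal factorization.} By an Euler-characteristic count, minimality forces the following. Let $\tau_1\cdots\tau_k=u$ be minimal and transitive. Think of the product as successively multiplying transpositions, each of which either merges two cycles or splits one. Transitivity requires at least $M-1$ of the steps to be merges that connect the graph, so $k=M+c-2$ leaves exactly $c-1$ "wasted" merge/split pairs. Hence the underlying multigraph has cyclomatic number $c-1$.

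\emph{Step 2: move the last factor into a canonical position.} I would take $\tau_k=(i\,j)$ and distinguish two cases. If $i,j$ lie in the same cycle of $u$, then $\tau_1\cdots\tau_{k-1}=u\tau_k$ has $c+1$ cycles on $M$ points with $k-1$ factors. Since $k-1<M+(c+1)-2$, that prefix is not transitive, so it splits into exactly two transitive blocks, each of which is minimal. If $i,j$ lie in different cycles, then the prefix is a minimal transitive factorization of an element with $c-1$ cycles, and induction applies directly.

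\emph{Step 3: the first case.} In this case I apply induction to each block, normalising both. I then use the block moves of Remark~\ref{rem:HuwritzBlocks} and conjugate $\tau_k$ by suitable sub-products, via Hurwitz moves, to bring the connecting pair into the standard form $(r_1\,r_t)$ appearing twice, adjacent. This is the main obstacle. The difficulty is showing that the choice of which two cycles are connected, and of which elements inside them, can be changed by Hurwitz moves. I would handle it by sliding a transposition around a cycle: conjugating $(a\,b)$ by the product of a star tree realising $u|_C$ replaces $a$ by $u(a)$. This lets us move endpoints to the representatives $r_t$, and the permutation of cycle labels is then handled similarly.

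\emph{Step 4: the second case.} Here the prefix is minimal for an element with $c-1$ cycles and is normalised by induction. I would then show, by a rank-two computation, that a factorization of the shape $(\dots,(a\,b))$, with $(a\,b)$ joining two cycles, is Hurwitz equivalent to one ending in the first-case pattern. This reduces the second case to the first.
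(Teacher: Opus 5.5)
The paper does not prove this theorem; it quotes it from Hurwitz and Kluitmann (see also Lando--Zvonkin). Your outline therefore has to stand on its own, and as written it has genuine gaps.

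\textbf{Step~4 fails in general.} If every cycle of $u$ is a fixed point, for example $u=\mathrm{id}$, then no transposition has both endpoints in one cycle of $u$. This is exactly the case of a translation $w=t_\lambda$, where the paper applies the theorem in Theorem~\ref{thm:normalform}(i). So your first case never occurs, and no factorization can be moved to one ``ending in the first-case pattern''. The second case has to be treated directly. Your own induction can do this:
\begin{enumerate}[label=\textup{(\arabic*)}]
\item Push $\tau_k$ to the left past the connector pairs of the normal form of $u\tau_k$. Each has product $1$, so nothing changes.
\item Isolate the tree of the cycle $D$ of $u\tau_k$ that $\tau_k$ splits.
\item Use the induction hypothesis for the cycle $D$ (fewer than $k$ factors) to replace that tree by a factorization ending in $\tau_k$. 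Then $\tau_k$ has become half of a doubled pair.
\end{enumerate}

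\textbf{Step~3 misreads the role of $\tau_k$.} In the first case $\tau_k$ is a bridge merging the two cycles $C',C''$ of $u\tau_k$ into one cycle $C$ of $u$. So it belongs to the tree of $C$, which induction then normalises for $c\ge2$; it does not become a connector pair. In both cases you end up with $c-1$ doubled pairs forming an arbitrary tree on the cycles. Sliding endpoints within a cycle is not enough to turn this into a star centred at $C_1$. You also need a move that re-hangs pairs, e.g.\ $((a\,b),(a\,b),(b\,c),(b\,c))\sim((a\,b),(a\,b),(a\,c),(a\,c))$, and ``handled similarly'' does not supply it.

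\textbf{The case $c=1$ is never reached.} This is the more serious omission, and it is the actual content of Hurwitz's theorem. If $u$ is an $M$-cycle, every factorization is in your first case. The two blocks are exactly the two cycles $C',C''$ of $u\tau_k$, and there are no connectors at all. After normalising the blocks you must show that all tuples $(\text{star}_{C'},\text{star}_{C''},(i\,j))$, over all possible splits, lie in one orbit. The induction hypothesis says nothing here, because $k$ has not dropped.

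Sliding does not help either. A connector pair can be carried across a star and back unchanged because its product is $1$. Carrying the single transposition $(i\,j)$ back across a star instead conjugates that star by $(i\,j)$.

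What is missing is a last-factor exchange argument: the orbit of any minimal factorization of an $M$-cycle contains one ending in a fixed transposition. For instance, by induction you may take the factorization of $C''$ to end in $(j\,j')$ for any $j'\in C''$. Then $\sigma\cdot\big((j\,j'),(i\,j)\big)=\big((i\,j'),(j\,j')\big)$ changes the last factor from $(i\,j)$ to $(j\,j')$. You would still have to prove that these exchanges connect all transpositions.

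Every other case uses the cycle case for smaller cycles through the induction hypothesis. So without this argument the whole induction breaks down.
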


Again see also \cite[\S5.2]{LZ04}. Note that $k$ is exactly the minimal length permitted by
Theorem~\ref{thm:genus}.

\begin{remark}\label{rem:CHhyp}
The hypothesis that the factors generate $S_M$ cannot be dropped. The factorizations considered in Theorem~\ref{thm:CH} have length $M+c(u)-2$, which exceeds the reflection length $\ell_T(u)=M-c(u)$ of $u$ in $S_M$ by $2(c(u)-1)$. They are reduced only when $u$ is a single cycle.
\end{remark}

We now fix a \emph{normal form} for reflection factorizations in $\Sa_N$ which we have already seen in the proof of Theorem~\ref{thm:length}.

\begin{definition}\label{def:normalform}
Choose a representative $r_t\in C_t$ for each cycle of $u=\lin(w)$ and put
$\gamma_t=e_{r_1}-e_{r_t}$ for $2\le t\le c$. For each $t$ fix a tuple $\mathcal T_t$ of
$|C_t|-1$ transpositions supported on $C_t$, forming a tree and with product the cycle
$u|_{C_t}$.

A tuple $F\in T^m$ is in \emph{star--connector form} if
\[
\lin(F)=\Big(\mathcal T_1,\ \mathcal T_2,\ \dots,\ \mathcal T_c,\
\big(\tau_{2},\tau_{2}\big),\ \big(\tau_3,\tau_3\big),\ \dots,\ \big(\tau_{c},\tau_{c}\big)\Big),
\qquad \tau_t=(r_1\ r_t).
\]
In words: the first $N-c$ entries factor the cycles one after the other, and the last $2(c-1)$
entries consist of $c-1$ pairs of transpositions, which we call \emph{connector pairs}. In particular the pair of reflections in $F$ corresponding to the connector pair $(\tau_t, \tau_t)$ has a common direction $\gamma_t$. 
More precisely, the pair of reflections in $F$ lifting the connector pair $(\tau_t, \tau_t)$ is of the form $(s_{\gamma_t, x_t}, s_{\gamma_t, y_t})$. We call $y_t$ the \emph{position} and $x_t-y_t$ the \emph{gap} of the connector pair. We call $y = (y_2, \dots, y_c)$ the \emph{position vector}. By abuse of notation, we call both $(\tau_t, \tau_t)$ and $(s_{\gamma_t, x_t}, s_{\gamma_t, y_t})$ connector pair, whereby the context makes it clear which pair is meant. An entry of a connector pair is called \emph{connector}.
\end{definition}

The transpositions $\tau_t$ ($2 \leq t \leq c$) from the connector pairs form a star with centre $r_1$, hence the name. 
Also recall the defintion of $\ZZ^{\{2,\dots,c\}}$ as introduced before the proof of Lemma~\ref{lem:winding}. We choose to index coordinates of $y$ by the labels $2,\dots,c$ rather than by $1,\dots,c-1$, so that the coordinate $y_t$, the direction $\gamma_t$, the cycle $C_t$ and the winding number $\omega_t$ all carry the same index.

\begin{definition}\label{def:Fy}
Assume $q(w)=1$, write $w=t_\mu u$ and keep the choices made in Definition~\ref{def:normalform}. Put
\[
d_t:=-\omega_t\quad(2\le t\le c),\qquad \nu:=\sum_{t=2}^{c}d_t\gamma_t,\qquad \rho:=\mu-u(\nu),
\]
and let $\rho_t$ be the restriction of $\rho$ to the index set $C_t$. Let
$\gamma^t_1,\dots,\gamma^t_{m_t}$, $m_t=|C_t|-1$, be the roots of the transpositions in
$\mathcal T_t$, in order, and set $v^t_j:=\big(s_{\gamma^t_1}\cdots s_{\gamma^t_{j-1}}\big)(\gamma^t_j)$.
Finally let $k^t=(k^t_1,\dots,k^t_{m_t})\in\ZZ^{m_t}$ be the unique vector with
$\sum_{j}k^t_jv^t_j=\rho_t$.

For $y\in\ZZ^{\{2,\dots,c\}}$ define
\[
F(y):=\Big(\,s_{\gamma^1_1,k^1_1},\dots,s_{\gamma^1_{m_1},k^1_{m_1}},\dots, s_{\gamma^c_{1},k^c_{1}}, \dots, s_{\gamma^c_{m_c},k^c_{m_c}},\ \
s_{\gamma_2,\,y_2+d_2},\ s_{\gamma_2,\,y_2},\ \dots,\ s_{\gamma_c,\,y_c+d_c},\ s_{\gamma_c,\,y_c}\,\Big),
\]
a tuple of $m=N+c-2$ reflections in star--connector form with connector gaps $d_t$ and positions
$y_t$.
\end{definition}

Let us check that $F(y)$ is well defined: By the proof of Theorem~\ref{thm:length} we have that $\rho_t\in Q(C_t)$, and the vector $k^t$ exists and is uniquely determined. The latter because the roots $\gamma^t_j$ are linearly independent and generate $Q(C_t)$ (Lemma~\ref{lem:tree}), so that the $v^t_j$ are a $\ZZ$-basis of $Q(C_t)$ by Lemma~\ref{lem:levels}(iii),(iv).

\begin{theorem}\label{thm:normalform}
Assume $q(w)=1$. Then:
\begin{enumerate}[label=\textup{(\roman*)}]
\item every $F\in\Red_T(w)$ is Hurwitz equivalent to a tuple in star--connector form;
\item $F(y)\in\Red_T(w)$ for every $y\in\ZZ^{\{2,\dots,c\}}$, and conversely every tuple in
star--connector form with product $w$ equals $F(y)$ for exactly one $y$;
\item $d_t\ne0$ for every $t$.
\end{enumerate}
In particular, every reduced reflection factorization of $w$ is Hurwitz equivalent to $F(y)$ for
some $y$, and the assignment $y\mapsto F(y)$ is injective.
\end{theorem}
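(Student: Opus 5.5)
The plan is to prove (iii), then (ii), then (i); (i) carries the real content.

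For (iii), since $q(w)=1$ and $c\ge2$ whenever connector pairs exist, Lemma~\ref{lem:nonzero} applied to the unique maximal balanced partition (a single part with $\#B=c\ge2$) gives $\omega_t\ne0$, hence $d_t=-\omega_t\ne0$.

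For (ii) I would follow the upper bound computation in the proof of Theorem~\ref{thm:length}. By Lemma~\ref{lem:pairs}, the connector pair $(s_{\gamma_t,y_t+d_t},s_{\gamma_t,y_t})$ has product $t_{d_t\gamma_t}$ independently of $y_t$. Hence the connectors of $F(y)$ multiply to $t_\nu$. By Lemma~\ref{lem:levels}(i) and the choice of $k^t$, the cycle part multiplies to $\prod_t t_{\rho_t}u|_{C_t}=t_\rho u$. The total product is therefore $t_{\rho+u(\nu)}u=t_\mu u=w$. The length is $N-c+2(c-1)=N+c-2=\ell_T(w)$ by Theorem~\ref{thm:length}, so $F(y)\in\Red_T(w)$.

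For the converse, let $F$ be in star--connector form with product $w$. Its connector pairs are $(s_{\gamma_t,x_t},s_{\gamma_t,y_t})$ with product $t_{(x_t-y_t)\gamma_t}$, and its cycle part is $t_{\rho'}u$ with $\rho'$ supported blockwise in $\bigoplus_t Q(C_t)$. Comparing with $w$ gives $\rho'+u(\sum_t(x_t-y_t)\gamma_t)=\mu$. Taking cycle sums via Lemma~\ref{lem:winding}(i) (the cycle sums of $\rho'$ vanish, and $u$ preserves cycle sums), the sum over $C_t$ for $t\ge2$ gives $-(x_t-y_t)=\omega_t$, so every gap is $d_t$. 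Then $\rho'=\rho$, and the uniqueness in Lemma~\ref{lem:levels}(iv) forces the cycle levels to equal $k^t$. Thus $F=F(y)$, where $y$ is read off from the positions; this also shows $y\mapsto F(y)$ is injective.

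For (i), take $F\in\Red_T(w)$. By Lemma~\ref{lem:blocks} and $q=1$, its direction graph is connected, so $\lin(F)$ is a factorization of $u$ into $N+c-2$ transpositions generating $S_N$. The star--connector pattern $(\mathcal T_1,\dots,\mathcal T_c,(\tau_2,\tau_2),\dots,(\tau_c,\tau_c))$ is another such factorization: its product is $u$, since each $\tau_t^2=1$, and it generates $S_N$, since the trees together with the star form a connected graph. Both have the minimal length, so Theorem~\ref{thm:CH} gives a braid $\beta$ with $\beta\cdot\lin(F)$ equal to the pattern. The Hurwitz action commutes with $\lin$, which is a homomorphism, so $\beta\cdot F$ lies in $\Red_T(w)$ and has exactly this linear part, i.e.\ it is in star--connector form.

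The main obstacle I expect is in (i): justifying that Theorem~\ref{thm:CH} applies requires connectedness from irreducibility, and it must be checked that lifting the braid through $\lin$ really lands in star--connector form. This second point holds because the definition of that form only constrains the linear parts. The final ``in particular'' then follows by combining (i) and (ii).
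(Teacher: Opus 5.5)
Your proposal is correct and takes essentially the same route as the paper. For (i) you use connectedness of the direction graph (from $q(w)=1$), Theorem~\ref{thm:CH} applied to $\lin(F)$ and the star--connector pattern, and the $\lin$-equivariance of the Hurwitz action; for (ii) and (iii) you use the cycle-sum computation forcing every gap to equal $d_t=-\omega_t$, Lemma~\ref{lem:nonzero}, and the injectivity in Lemma~\ref{lem:levels}(iv). The only difference is presentational: the paper derives (iii) inside the proof of (ii), whereas you prove it first, directly from $d_t=-\omega_t$.
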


\begin{proof}
(i) Let $F\in\Red_T(w)$. Applying $\lin$ entrywise gives a tuple of $m=N+c-2$ transpositions
whose product is $u$. Since the direction graph of $F$ is connected by Lemma~\ref{lem:blocks} (recall $q(w)=1$), we obtain by Lemma~\ref{lem:tree} and \cite[Lemma 3.10.1]{GR01} that $u$ generates $S_N$. The tuple displayed in Definition~\ref{def:normalform} has the same length, the same product (its product is $u|_{C_1}\cdots u|_{C_c}=u$, since each connector pair contributes $\tau_t\tau_t=1$) and its directions likewise form a connected graph, since the connectors join all cycles, so it too generates $S_N$. If we call this tuple $G$, by Theorem~\ref{thm:CH} there is a braid $\beta\in \mathcal{B}_m$ with $\beta\cdot\lin(F)=G$.

Now the group homomorphism $\lin$ is equivariant with respect to the Hurwitz action. Hence $\lin(\beta\cdot F)=\beta\cdot\lin(F)=G$, so $\beta\cdot F$ is a reduced factorization of $w$ in
star--connector form.

(ii) We first show that a tuple $F$ in star--connector form has product $w$ if and only if its connector gaps are the $d_t$ and its cycle levels are the $k_t$ of Definition \ref{def:Fy}. Since the positions $y_t$ then determine $F$, this gives both assertions of (ii) at once.

Denote by $A_t\in\Sa_{C_t}$ the product of the subtuple lifting $\mathcal T_t$ and by $d_t$ the difference of the two levels of the connector pair with direction $\gamma_t$. By Lemma~\ref{lem:pairs} the product of that pair is $t_{d_t\gamma_t}$, which does not depend on $y_t$. Since translations commute, the product of all connector pairs is $t_\nu$ with $\nu=\sum_{t\ge2}d_t\gamma_t$, and
\[
w=A_1A_2\cdots A_c\cdot t_\nu .
\]
The $A_t$ act on pairwise disjoint sets of coordinates, so $A_1\cdots A_c=t_\rho\,u$ where $\rho$ is the vector whose restriction to $C_t$ is the translation part $\rho_t\in Q(C_t)$ of $A_t$. Hence
\[
\mu=\rho+u(\nu).
\]
Now sum the coordinates over the cycle $C_t$. Since $\rho_t\in Q(C_t)$ the vector $\rho$ contributes $0$, and since $u$ preserves $C_t$ the vector $u(\nu)$ contributes the same as $\nu$. Therefore
\[
\omega_t=\sum_{i\in C_t}\mu_i=\sum_{i\in C_t}\nu_i=-d_t\quad(t\ge2),
\]
because $\nu=\sum_{s\ge2}d_s\gamma_s=\sum_{s\ge2}d_s(e_{r_1}-e_{r_s})$ has $C_t$-coordinate sum $-d_t$ for $t\ge2$.

So $d_t=-\omega_t$ as claimed, and $d_t\ne0$ by Lemma~\ref{lem:nonzero} (here $q(w)=1$, and $c\ge2$ since otherwise there are no connectors at all). Note also that the case $t=1$ is automatically consistent, by
\eqref{eq:sumzero}.

Consequently $\nu$ is determined by $w$, hence so is $\rho=\mu-u(\nu)$ and therefore each $A_t$. Finally, the levels of the entries lifting $\mathcal T_t$ are determined by $A_t$. Their directions are $|C_t|-1$ linearly independent roots forming a tree inside $C_t$, hence generate $Q(C_t)$ by Lemma~\ref{lem:tree}. By Lemma~\ref{lem:levels}(i) the product of these entries with level vector $k$ equals $t_{\sum_jk_jv_j}\,u|_{C_t}$, and by Lemma~\ref{lem:levels}(iv) the assignment $k\mapsto\sum_jk_jv_j$ is injective, so $k$ is determined by $A_t$. 

So if $F$ is in star--connector form with product $w$, then its gaps and cycle levels are exactly those of Definition~\ref{def:Fy}, i.e.\ $F=F(y)$ with $y$ its vector of positions. This $y$ is uniquely determined, since distinct position vectors give distinct tuples. Conversely, for $F=F(y)$ the gaps are $d_t$, hence the product of the connector part is $t_\nu$, and the cycle levels $k^t$ give $A_t=t_{\rho_t}u|_{C_t}$ by Lemma~\ref{lem:levels}(i), so the product is $t_\rho u\,t_\nu=t_{\rho+u(\nu)}u=t_\mu u=w$. Its
length is $m=N+c-2=\ell_T(w)$ by Theorem~\ref{thm:length}, so $F(y)\in\Red_T(w)$.

(iii) This was already shown in (ii).
\end{proof}

So, up to Hurwitz equivalence, a reduced factorization of an irreducible $w$ is described by a single vector $y\in\ZZ^{\{2,\dots,c\}}$ of integers. It remains to decide when two such vectors yield Hurwitz equivalent factorizations. This is the content of the next two sections, and the answer is: exactly when they agree modulo
\[
g:=\gcd(\omega_1,\dots,\omega_c)=\gcd(d_2,\dots,d_c),
\]
the last equality because $d_t = - \omega_t$ and $\omega_1=-\sum_{t\ge2}\omega_t$ by \eqref{eq:sumzero}.

\begin{remark}
One way to generalize Theorem~\ref{thm:normalform} to other affine types would be to generalize Theorem~\ref{thm:CH} to all finite Coxeter groups, which is a conjecture of Lewis \cite[Conjecture~1]{DL24}. Also a generalization of the star--connector form to other affine types or a simplification of the arguments given here might possibly be achieved by using \cite[Corollary~1.4]{LR16}.
\end{remark}

\subsection{Moving the connectors}\label{sec:moves}

\begin{lemma}[The lattice of shifts]\label{lem:shiftlattice}
Let $d_2,\dots,d_c$ be nonzero integers and $g=\gcd(d_2,\dots,d_c)$. Let
$L\le\ZZ^{\{2,\dots,c\}}$ be the subgroup generated by the vectors
\[
d_t\,\mathbf e_t\quad(2\le t\le c)\qquad\text{and}\qquad d_s\,\mathbf e_t+d_t\,\mathbf e_s\quad(s\ne t),
\]
where $\mathbf e_t$ is the standard basis vector of $\ZZ^{\{2,\dots,c\}}$ labelled by $t$.
Then $L=g\,\ZZ^{\{2,\dots,c\}}$.
\end{lemma}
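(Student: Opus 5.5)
We prove the two inclusions separately. The inclusion $L\subseteq g\,\ZZ^{\{2,\dots,c\}}$ is immediate. Every generator $d_t\mathbf e_t$ and $d_s\mathbf e_t+d_t\mathbf e_s$ has all coordinates among the $d_i$ or $0$, hence divisible by $g$. It remains to show that $g\mathbf e_t\in L$ for every $t$.

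The key observation is that the mixed generators let us move a multiple of $d_s$ into coordinate $t$ at the cost of a multiple of $d_t$ in coordinate $s$, and the latter can be cancelled by the pure generator $d_t\mathbf e_s$ only if $d_s\mid d_t$. That divisibility need not hold, so instead we work modulo the sublattice $L_0:=\bigoplus_t d_t\ZZ\,\mathbf e_t\subseteq L$. Consider
\[
(d_s\mathbf e_t+d_t\mathbf e_s)-d_t\mathbf e_s=d_s\mathbf e_t .
\]
Since $d_t\mathbf e_s$ need not be in $L_0$, we cannot do this directly. However, $d_t\mathbf e_t\in L$ and $d_s\mathbf e_s\in L$, and we want to show $d_s\mathbf e_t\in L$. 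Write $h=\gcd(d_s,d_t)$ and $d_s=h a$, $d_t=h b$ with $\gcd(a,b)=1$. Then
\[
a\,(d_s\mathbf e_t+d_t\mathbf e_s)-b\,(d_s\mathbf e_s)=a d_s\mathbf e_t+(ahb-bha)\mathbf e_s=a d_s\mathbf e_t .
\]
So $a d_s\mathbf e_t\in L$, and also $b\cdot d_s\mathbf e_t=a\,d_t\mathbf e_t\in L$. Since $\gcd(a,b)=1$, a Bézout combination gives $d_s\mathbf e_t\in L$.

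Having $d_s\mathbf e_t\in L$ for all $s\ne t$, together with $d_t\mathbf e_t\in L$, we apply Bézout over all indices to obtain $\gcd(d_2,\dots,d_c)\,\mathbf e_t=g\mathbf e_t\in L$. Summing over $t$ then gives $g\,\ZZ^{\{2,\dots,c\}}\subseteq L$.

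The only step needing care is the two-index computation above. The rest is bookkeeping. The degenerate case $c=2$ is trivial, since then $L=d_2\ZZ=g\ZZ$. The hypothesis $d_t\ne0$ guarantees that the quantities $a,b$ are well defined.
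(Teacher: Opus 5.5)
Your proof is correct, but it takes a different route from the paper.

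\textbf{The paper's route.} It first normalises to $g=1$ and then argues by duality. If $\ZZ^{\{2,\dots,c\}}/L$ were nontrivial, it would surject onto $\ZZ/p$ for some prime $p$. So it suffices to show that no nonzero linear form $\sum_t u_t x_t$ over $\ZZ/p$ vanishes on the generators. Choosing $k$ with $p\nmid d_k$, the generator $d_k\mathbf e_k$ forces $u_k\equiv 0$. The generators $d_k\mathbf e_t+d_t\mathbf e_k$ then force $u_t\equiv 0$ for every $t$.

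\textbf{Your route.} You work directly inside $L$. The identity $a(d_s\mathbf e_t+d_t\mathbf e_s)-b\,d_s\mathbf e_s=a\,d_s\mathbf e_t$ together with $b\,d_s\mathbf e_t=a\,d_t\mathbf e_t$ and $\gcd(a,b)=1$ gives $d_s\mathbf e_t\in L$ for every ordered pair $(s,t)$. A final B\'ezout combination over all $d_i$ then gives $g\mathbf e_t\in L$.

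\textbf{Comparison.} Your argument is fully constructive: it yields explicit integer combinations and the intermediate fact that $L$ contains every $d_s\mathbf e_t$. It also needs neither the reduction to $g=1$ nor the fact that a nontrivial finitely generated abelian group has a $\ZZ/p$ quotient. The paper's argument is shorter to verify and is an instance of the standard local criterion: a sublattice is everything once one checks it modulo each prime. Its key step, using an index $k$ with $d_k$ invertible mod $p$, is essentially the dual of your two-index computation.

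\textbf{Minor slips.} In your motivating sentence, the pure generator available in coordinate $s$ is $d_s\mathbf e_s$, not $d_t\mathbf e_s$. The sublattice $L_0$ you introduce is never actually used. Neither affects the validity of the proof.
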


\begin{proof}
Every generator has all its entries divisible by $g$, so $L\subseteq g\ZZ^{\{2,\dots,c\}}$. Writing $d_t=gd_t'$ replaces $L$ by $gL'$ with $L'$ the analogous lattice for the $d_t'$, so we may assume $g=1$ and must show $L=\ZZ^{\{2,\dots,c\}}$.

The quotient $\ZZ^{\{2,\dots,c\}}/L$ is a finitely generated abelian group. If it were nontrivial it would admit a surjection onto $\ZZ$ or onto $\ZZ/p$ for some prime $p$, and in either case onto $\ZZ/p$. So it suffices to show that for every prime $p$ no nonzero linear form $\varphi=\sum_t u_tx_t$ over $\ZZ/p$ vanishes on $L$. Suppose $\varphi$ does vanish, i.e.
\[
u_td_t\equiv0\ \ (2 \leq t \leq c),\qquad u_td_s+u_sd_t\equiv0\ \ \text{mod }p\quad (s\ne t) .
\]
Since $\gcd(d_2,\dots,d_c)=1$ there is an index $k$ with $d_k\not\equiv0\ \text{mod } p$. The first relation with $t=k$ gives $u_k\equiv0$. The second relation, with $s=k$ and arbitrary $t$, then gives $u_td_k+u_kd_t\equiv u_td_k\equiv0$, whence $u_t\equiv0$ for every $t$. So $\varphi=0$.
\end{proof}

\begin{lemma}\label{lem:moves}
Assume $q(w)=1$ and $c\ge2$, and let $y\in\ZZ^{\{2,\dots,c\}}$. Then
\[
F(y)\ \sim\ F(y+d_t\mathbf e_t)\quad(2\le t\le c),
\qquad
F(y)\ \sim\ F(y+d_s\mathbf e_t+d_t\mathbf e_s)\quad(s\ne t).
\]
Hence $F(y)\sim F(y')$ whenever $y\equiv y'\ \text{mod } g$, where $g=\gcd(\omega_1,\dots,\omega_c)=\gcd(d_2,\dots,d_c)$.
\end{lemma}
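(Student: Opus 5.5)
The plan is to realise both kinds of shifts by Hurwitz moves that only touch the connector part of $F(y)$, i.e.\ the last $2(c-1)$ entries. The cycle part $(\mathcal T_1,\dots,\mathcal T_c)$ is never involved, so its levels $k^t$ stay fixed. We will always end with a tuple in star--connector form with the same gaps $d_t$. By Theorem~\ref{thm:normalform}(ii) such a tuple is $F(y')$, where $y'$ is its position vector, so it only remains to track positions.

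\emph{First move.} Apply the generator $\sigma_i$ to the two entries of the connector pair $(s_{\gamma_t,y_t+d_t},s_{\gamma_t,y_t})$. By Lemma~\ref{lem:conj}(ii) with $(\gamma_t\mid\gamma_t)=2$ we get
\[
s_{\gamma_t,y_t+d_t}\,s_{\gamma_t,y_t}\,s_{\gamma_t,y_t+d_t}=s_{\gamma_t,\,y_t+2d_t}.
\]
So the pair becomes $(s_{\gamma_t,y_t+2d_t},s_{\gamma_t,y_t+d_t})$. The gap is still $d_t$ and the position is now $y_t+d_t$, while all other entries are unchanged. Hence $F(y)\sim F(y+d_t\mathbf e_t)$.

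\emph{Second move.} First let the pairs for $t$ and $s$ be adjacent, with pair $t$ (call it $A$) immediately before pair $s$ (call it $B$). By Lemma~\ref{lem:pairs}, $A$ has product $a=t_{d_t\gamma_t}$ and $B$ has product $b=t_{d_s\gamma_s}$.
\begin{enumerate}[label=\textup{(\arabic*)}]
\item Apply the block move \eqref{eq:blockmove} to obtain $(aBa^{-1},A)$. By Lemma~\ref{lem:conj}(i) and $(\gamma_t\mid\gamma_s)=1$ for $s\ne t$ (both roots are $e_{r_1}-e_{r_\bullet}$), this raises both levels of $B$ by $d_t$. The block $aBa^{-1}$ still has product $b$, since translations commute.
\item Apply the block move again to $(aBa^{-1},A)$. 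This yields $(bAb^{-1},aBa^{-1})$, and conjugation by $b$ raises both levels of $A$ by $d_s$.
\end{enumerate}
The pairs are back in their original order and the gaps are unchanged, so we have $F(y+d_s\mathbf e_t+d_t\mathbf e_s)$.

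If the pairs are not adjacent, first apply a braid $\beta$ that moves pair $s$ next to pair $t$ by block moves. These shift the levels of the moved pair by amounts that depend only on the gaps of the pairs it passes, because those pairs have translation products independent of their positions. Then perform the double swap above, and finally apply $\beta^{-1}$. The step I expect to need the most care is checking that $\beta^{-1}$ exactly undoes the shifts of $\beta$ even though pairs $t$ and $s$ now have new positions. This holds because the conjugating elements in each block move are the translations $t_{d\gamma}$ determined by the gaps alone, and these are unchanged throughout. So the net effect of $\beta^{-1}\cdot(\text{double swap})\cdot\beta$ is exactly the shift $d_s\mathbf e_t+d_t\mathbf e_s$.

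\emph{Conclusion.} Compositions of these moves give $F(y)\sim F(y+\ell)$ for every $\ell$ in the lattice $L$ of Lemma~\ref{lem:shiftlattice}, using inverse braids for negative multiples. The $d_t$ are nonzero by Theorem~\ref{thm:normalform}(iii), so Lemma~\ref{lem:shiftlattice} applies and gives $L=g\ZZ^{\{2,\dots,c\}}$. Hence $F(y)\sim F(y')$ whenever $y\equiv y'\bmod g$.
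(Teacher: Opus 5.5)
Your proposal is correct and follows essentially the same route as the paper. It uses a single Hurwitz move on a connector pair for the shift $d_t\mathbf e_t$, and two successive block moves on adjacent pairs for $d_s\mathbf e_t+d_t\mathbf e_s$. The non-adjacent case is reduced to the adjacent one by conjugating with a braid whose conjugating translations depend only on the gaps, and Lemma~\ref{lem:shiftlattice} finishes the argument. The only cosmetic difference is that you carry the later pair leftwards, so the moved pair itself is shifted and then unshifted. The paper instead carries $P_s$ rightwards, shifting and then restoring the intermediate pairs; both versions work equally well.
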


\begin{proof}
We proceed in five steps.

\emph{Step 1: Move 1.} The connector pair with direction $\gamma_t$ is $\big(s_{\gamma_t,\,y_t+d_t},\,s_{\gamma_t,\,y_t}\big)$. Applying a simple Hurwitz move yields
\[
\big(s_{\gamma_t,\,y_t+d_t},\,s_{\gamma_t,\,y_t}\big) \sim
\big(s_{\gamma_t,\,y_t+d_t}\ s_{\gamma_t,\,y_t}\ s_{\gamma_t,\,y_t+d_t},\ s_{\gamma_t,\,y_t+d_t}\big)
=\big(s_{\gamma_t,\,y_t+2d_t},\ s_{\gamma_t,\,y_t+d_t}\big),
\]
again a connector pair with direction $\gamma_t$ and gap $d_t$, but with second level $y_t+d_t$. All other entries stay unchanged, so the result is $F(y+d_t\mathbf e_t)$.

\emph{Step 2: swapping two adjacent connector pairs.}
Suppose the connector pairs $P_a$ and $P_b$ occur next to each other, in the order $(\dots,P_a,P_b,\dots)$. Their products are the translations $\pi_a=t_{d_a\gamma_a}$ and $\pi_b=t_{d_b\gamma_b}$ (Lemma~\ref{lem:pairs}). Applying the block move \eqref{eq:blockmove} to these two blocks gives
\[
(P_a,P_b)\ \sim\ (\pi_aP_b\pi_a^{-1},\ P_a).
\]
By Lemma~\ref{lem:conj}(i) each entry of $P_b$ keeps its direction $\gamma_b$ and has its level
raised by
\[
(d_a\gamma_a\,|\,\gamma_b)=d_a(\gamma_a|\gamma_b)=d_a ,
\]
since $\gamma_a=e_{r_1}-e_{r_a}$ and $\gamma_b=e_{r_1}-e_{r_b}$ with $r_a\ne r_b$. Thus $\pi_aP_b\pi_a^{-1}$ is again a
connector pair with direction $\gamma_b$ and gap $d_b$, now with position $y_b+d_a$, while $P_a$ is unchanged. We record:
\begin{quote}
$\mathrm{sw}(a,b)$: the two pairs are interchanged, the coordinate $y_b$ is raised by $d_a$, and
all other coordinates of $y$ are unchanged.
\end{quote}
Two observations about $\mathrm{sw}(a,b)$ will be used below. First, it is realised by a Hurwitz move, so the inverse Hurwitz move realises the inverse operation: the two pairs are interchanged back and $y_b$ is lowered by $d_a$, all other coordinates being unchanged. Second, the product $\pi_a$ of a connector pair depends only on its gap $d_a$ and not on its position $y_a$. Hence the effect just recorded is the same whatever the current positions happen to be.

\emph{Step 3: Move 2 for two adjacent pairs.}
Let $s\ne t$ and suppose that $P_s$ and $P_t$ are adjacent, in the order $(P_s,P_t)$. Apply $\mathrm{sw}(s,t)$: the pairs now stand in the order $(P_t,P_s)$ and $y_t$ has been raised by $d_s$. Apply $\mathrm{sw}(t,s)$ to the same two adjacent positions: the pairs stand again in the order $(P_s,P_t)$ and $y_s$ has been raised by $d_t$. The order is therefore the one prescribed by Definition~\ref{def:normalform}, while the effect on $y$ is
\[
y\ \longmapsto\ y+d_t\mathbf e_s+d_s\mathbf e_t .
\]

\emph{Step 4: Move 2 for arbitrary pairs.}
Let $s\ne t$ and we may assume that $P_s$ occurs before $P_t$. Let $P_{a_1},\dots,P_{a_r}$ be the connector pairs strictly between them, in order. Apply $\mathrm{sw}(s,a_1),\mathrm{sw}(s,a_2),\dots,\mathrm{sw}(s,a_r)$ in this order. Each of them moves $P_s$ one position to the right and raises the position of the pair it passes by $d_s$. Afterwards, $P_s$ stands immediately before $P_t$, the coordinates $y_{a_1},\dots,y_{a_r}$ have each been raised by $d_s$, and $y_s$ and $y_t$ are unchanged. Now apply Step~3 to the adjacent pairs $(P_s,P_t)$: this raises $y_s$ by $d_t$ and $y_t$ by $d_s$, and leaves $P_s$ and $P_t$ in their original relative order. Finally apply the inverse operations of $\mathrm{sw}(s,a_r),\dots,\mathrm{sw}(s,a_1)$, in this order. By the first observation of Step~2, each of them moves $P_s$ one position back to the left and lowers the position of the pair it passes by $d_s$. By the second observation this holds even though $y_s$ has changed in the meantime, since $d_s$ has not. Afterwards, the connector pairs stand in their original order and $y_{a_1},\dots,y_{a_r}$ have returned to their original values. The effect of the whole sequence is
\[
y\ \longmapsto\ y+d_t\mathbf e_s+d_s\mathbf e_t ,
\]
which is the second assertion of the lemma.

\emph{Step 5: conclusion.}
Put
\[
S=\big\{\,v\in\ZZ^{\{2,\dots,c\}}\ :\ F(z)\sim F(z+v)\ \text{ for every }z\in\ZZ^{\{2,\dots,c\}}\,\big\}.
\]
Steps~1, 3 and~4 produce their shifts starting from an arbitrary position vector, so
\[
d_t\mathbf e_t\in S\quad(2\le t\le c),\qquad d_s\mathbf e_t+d_t\mathbf e_s\in S\quad(s\ne t).
\]
Moreover it is easy to see that $S$ is a subgroup of $\ZZ^{\{2,\dots,c\}}$. Hence $S$ contains the subgroup $L$ generated by the vectors $d_t \mathbf e_t$ $(2\le t\le c)$ and $d_s\mathbf e_t+d_t\mathbf e_sv$ $(s\ne t)$, and $L=g\,\ZZ^{\{2,\dots,c\}}$ by Lemma~\ref{lem:shiftlattice}.

Therefore, if $y\equiv y'~\text{mod } g$, then $y'-y\in g\,\ZZ^{\{2,\dots,c\}}=L\subseteq S$, and so
$F(y)\sim F(y')$.
\end{proof}

\begin{remark}
Interchanging $P_a$ and $P_b$ necessarily shifts $y_b$ by $d_a$. It is only after Lemma~\ref{lem:shiftlattice} that one sees that such a reordering does not leave the Hurwitz orbit, since $g\mid d_s$.
\end{remark}

\subsection{The main theorem}\label{sec:count}

\begin{theorem}\label{thm:main}
Let $w\in\Sa_N$ and $F,F'\in\Red_T(w)$. Then
\[
F\sim F'\qquad\Longleftrightarrow\qquad \langle F\rangle=\langle F'\rangle .
\]
Moreover, if $q(w)=1$ and $g=\gcd(\omega_1,\dots,\omega_c)$, then $\Red_T(w)$ has exactly
$g^{\,c-1}$ Hurwitz orbits, represented by the tuples $F(y)$ with $y$ running through a set of
representatives of $(\ZZ/g\ZZ)^{\{2,\dots,c\}}$.
\end{theorem}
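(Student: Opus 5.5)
The plan is to treat the irreducible case $q(w)=1$ first and then reduce the general case to it using Lemma~\ref{lem:blocks}. The direction ``$\Rightarrow$'' is Lemma~\ref{lem:subgroupinv}, so only ``$\Leftarrow$'' and the orbit count need work.

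\emph{Irreducible case.} Assume $q(w)=1$. If $c=1$ there are no connectors, $F(y)$ is a single tuple, and Theorem~\ref{thm:normalform} already gives one orbit, matching $g^{0}=1$. So let $c\ge2$. By Theorem~\ref{thm:normalform} every $F\in\Red_T(w)$ is equivalent to some $F(y)$, and by Lemma~\ref{lem:moves} the orbit of $F(y)$ depends only on $y \bmod g$. Hence there are at most $g^{c-1}$ orbits. For the lower bound I will show that the subgroup $W_y:=\langle F(y)\rangle$ determines $y \bmod g$; by Lemma~\ref{lem:subgroupinv} distinct classes then give distinct orbits. This also yields ``$\Leftarrow$'': if $\langle F\rangle=\langle F'\rangle$, move both to $F(y)$ and $F(y')$, so $W_y=W_{y'}$, hence $y\equiv y' \bmod g$, hence $F\sim F'$.

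\emph{Main step.} The key is to identify $W_y$ via Theorem~\ref{thm:structure}. Its direction graph is connected, so $\lin(W_y)=S_N$ by Lemma~\ref{lem:tree} and \cite[Lemma 3.10.1]{GR01}. Theorem~\ref{thm:structure} then gives $W_y=t_\nu\Gamma_e t_\nu^{-1}$ with $T\cap W_y=\{s_{\gamma,k}: k\equiv(\nu|\gamma)\bmod e\}$.

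To compute $e$: $W_y$ contains $s_{\gamma_t,y_t+d_t}$ and $s_{\gamma_t,y_t}$, so $e\mid d_t$ for all $t$ and thus $e\mid g$. Conversely, every generator of $W_y$ has level $\equiv\psi(\gamma) \bmod g$ for a suitable homomorphism $\psi$, so $W_y\subseteq t_{\nu'}\Gamma_g t_{\nu'}^{-1}$. Then $W_y\cap\mathcal T\subseteq\iota(gQ)$, which gives $g\mid e$. The remaining task is to construct $\nu'$ explicitly. I would build $\nu'$ by matching $(\nu'|\gamma)\equiv$ level mod $g$ along the spanning tree formed by the trees $\mathcal T_t$ together with the star edges $\gamma_t$. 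The star edges fix the values $(\nu'|\gamma_t)\equiv y_t \bmod g$. For the tree edges, I need the levels $k^t_j$ to be consistent with a homomorphism mod $g$.

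I expect this consistency to be the main obstacle. A cleaner route is to use Theorem~\ref{thm:structure} with $e$ unknown and simply read off the invariant $\gamma\mapsto(\nu|\gamma)\bmod e$. Evaluating it on the roots $\gamma_t$, where $W_y$ contains $s_{\gamma_t,y_t}$, gives $(\nu|\gamma_t)\equiv y_t \bmod e$. So $W_y$ determines $y \bmod e$, and $e$ itself, for free.

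It then remains to show $e=g$, i.e.\ $g\mid e$. Consider $W_y\cap\mathcal T=\iota(eQ)$ and the element $w$ raised to a power, or better the product of $F(y)$ restricted suitably. The cleanest version uses the winding numbers. Every element $x=t_\lambda v\in t_\nu\Gamma_e t_\nu^{-1}$ has $\lambda\in eQ+(1-v)\nu$, so all winding numbers of $x$ are divisible by $e$; in particular $e\mid\omega_t(w)$ for all $t$, i.e.\ $e\mid g$. For the reverse I still need $g\mid e$. Here I would show directly that the levels $k^t_j$ in $F(y)$ are compatible mod $g$ with a homomorphism. This follows because the trees $\mathcal T_t$ have product $A_t=t_{\rho_t}u|_{C_t}$, whose winding number is $0$; so $\langle\mathcal T_t\text{-part}\rangle$ is conjugate by a translation $t_{\nu_t}$, $\nu_t\in\ZZ^{C_t}$, into $S_{C_t}$, i.e.\ it has $e=0$ and its levels are exactly $(\nu_t|\gamma)$. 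Gluing the $\nu_t$ along the star edges modulo $g$ is then possible since the star is a tree, producing $\nu'$. Hence $e=g$, and $y \bmod g$ is an invariant of $W_y$, completing the irreducible case with exactly $g^{c-1}$ orbits.

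\emph{General case.} By Lemma~\ref{lem:blocksgroup}, $\langle F\rangle=\langle F'\rangle$ forces equal blocks. Since $\langle F\rangle=\prod_s\langle F_s\rangle$ with $\langle F_s\rangle=\langle F\rangle\cap\Sa_{B_s}$, the blockwise subgroups agree as well. The products $w_s$ are determined as the $\Sa_{B_s}$-components of $w$, and each $w_s$ is irreducible in $\Sa_{B_s}$ because a finer balanced partition would contradict maximality via Theorem~\ref{thm:length}. Applying the irreducible case blockwise, Lemma~\ref{lem:blocks}(iv) finishes the proof.
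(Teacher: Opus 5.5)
Your proposal is correct and follows the paper's proof essentially step by step. Theorem~\ref{thm:normalform} and Lemma~\ref{lem:moves} give at most $g^{c-1}$ orbits. Theorem~\ref{thm:structure} applied to $\langle F(y)\rangle$ shows that this subgroup determines $y \bmod g$, with $e\mid d_t$ coming from the connector pairs and $g\mid e$ from $\langle F(y)\rangle\subseteq t_{\nu'}\Gamma_g t_{\nu'}^{-1}$. The general case reduces blockwise via Lemmas~\ref{lem:blocksgroup} and~\ref{lem:blocks}.

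The only difference is cosmetic. You build $\nu'$ by gluing integral solutions along the spanning tree, whereas the paper defines $\psi$ on the $\ZZ$-basis of $Q$ formed by the $N-1$ directions of $F(y)$. For exactly this reason, the ``consistency'' you feared is automatic: the directions are linearly independent, and the two connector levels agree mod $g$. Also, the integral $\nu_t$ exists because of the tree structure of $\mathcal T_t$, not because $A_t$ has vanishing winding number.
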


\begin{proof}
The \emph{only if} direction is Lemma~\ref{lem:subgroupinv}. We
prove the \emph{if} direction.

Assume first $q(w)=1$. If $c=1$ then $m=N-1$ and there are no connectors. By Theorem~\ref{thm:normalform} all reduced reflection factorizations of $w$ are Hurwitz equivalent to the same tuple, so there is exactly one orbit, in agreement with $g^{c-1}=g^0=1$. So let $c\ge2$. Then $d_t\ne0$ for all $t$ and $g\ge1$ by Lemma~\ref{lem:nonzero}. By Theorem~\ref{thm:normalform} we may assume $F=F(y)$ and $F'=F(y')$.

\emph{Step 1:} if $y\equiv y'~\text{mod } g$ then $F(y)\sim F(y')$ by Lemma~\ref{lem:moves}.

\emph{Step 2:} We claim that $\langle F(y)\rangle$ determines $y$ modulo $g$. 

Write $W'=\langle F(y)\rangle$. By definition of the star--connector form, the directions occurring in $F(y)$ are the $N-c$ tree edges inside the cycles together with the $c-1$ connectors $\gamma_2,\dots,\gamma_c$. Hence we have $N-1$ distinct roots, and their graph is connected (each cycle is spanned by a tree, and the connectors connect all cycles to $r_1$), so it is a spanning tree of the complete graph on $\{1,\dots,N\}$. By Lemma~\ref{lem:tree} and \cite[Lemma 3.10.1]{GR01} they are a $\ZZ$-basis of $Q$ and they also generate $S_N$. In particular $\lin(W')=S_N$, so Theorem~\ref{thm:structure} yields $e\ge0$ and $\nu'\in\ZZ^N$ with
\begin{equation}\label{eq:ReflectionsFP}
T\cap W'=\{s_{\gamma,k}: \gamma \in \Phi, ~k\equiv(\nu'\mid \gamma) ~\mathrm{mod}\ e\}.
\end{equation}
Here in fact $e\ge1$ by \eqref{eq:egeq1}, since the connector pair with direction $\gamma_t$ consists of two reflections with equal direction and levels differing by $d_t\ne0$.

We claim $e=g$. First, the two entries of the connector pair with direction $\gamma_t$ lie in
$W'$ and have levels differing by $d_t$. By \eqref{eq:ReflectionsFP} their levels are congruent
modulo $e$, so $e\mid d_t$ for all $t$ and hence $e\mid g$.

For the converse divisibility we produce an explicit subgroup containing $W'$. The $N-1$
directions occurring in $F(y)$ form a $\ZZ$-basis of $Q$, so there is a unique homomorphism
\[
\psi:Q\longrightarrow\ZZ/g\ZZ
\]
sending each such direction to the level of the corresponding entry of $F(y)$ modulo $g$. This
is well defined even though the connector directions occur twice, because the two levels of the
$t$-th connector pair differ by $d_t$, and $g\mid d_t$, so they agree modulo $g$. Consider
\[
T_\psi:=\{s_{\gamma,k}\ :\ \gamma\in\Phi,\ k\equiv\psi(\gamma)\ \mathrm{mod}\ g\}.
\]
By construction all entries of $F(y)$ lie in $T_\psi$. Moreover $T_\psi$ is closed under
conjugation by its own elements, because if $k\equiv\psi(\gamma)$ and $l\equiv\psi(\alpha)$ then, by
Lemma~\ref{lem:conj}(ii), the conjugate $s_{\alpha,l}s_{\gamma,k}s_{\alpha,l}$ has direction
$s_{\alpha}(\gamma)$ and level
\[
k-(\gamma|\alpha)l\ \equiv\ \psi(\gamma)-(\gamma|\alpha)\psi(\alpha)
=\psi\big(\gamma-(\gamma|\alpha)\alpha\big) = \psi(s_{\alpha}(\gamma)) ~\text{mod } g ,
\]

Since all entries of $F(y)$ lie in $T_\psi$, we get $W'\langle F(y) \rangle\subseteq\langle T_\psi\rangle$. As in Step 3 of the proof of Theorem~\ref{thm:structure} we can choose a vector $\nu''\in\ZZ^N$ with $\psi(\gamma) \equiv (\gamma \mid \nu'')\ \text{mod }g$. We obtain
\begin{align*}
t_{\nu''}^{-1} T_{\psi}t_{\nu''} & = \{ t_{-\nu''}s_{\gamma,k}t_{\nu''}\ :\ \gamma\in\Phi,\ k\equiv\psi(\gamma)\ \mathrm{mod}\ g \}\\
& = \{ s_{\gamma,k-(\gamma \mid \nu'')}\ :\ \gamma\in\Phi,\ k\underbrace{\equiv\psi(\gamma)}_{\equiv (\gamma \mid \nu'')}\ \mathrm{mod}\ g \}\\
& = \{ s_{\gamma,l}\ :\ \gamma\in\Phi,\ g \mid l \},
\end{align*}
which generates $\Gamma_g$ by Step~4 of the proof of Theorem~\ref{thm:structure}. Hence $\langle T_\psi\rangle=t_{\nu''}\Gamma_g\,t_{\nu''}^{-1}$, and in particular the reflections of $\langle T_\psi\rangle$ are exactly the elements of $T_\psi$.

Since translations commute with one another, conjugation by $t_{\nu''}$ fixes $\mathcal T$ pointwise, so
\[
\langle T_\psi\rangle\cap\mathcal T=
t_{\nu''}\Gamma_g\,t_{\nu''}^{-1} \cap t_{\nu''}\mathcal T t_{\nu''}^{-1}= 
t_{\nu''}(\Gamma_g \cap \mathcal T)t_{\nu''}^{-1} = 
\Gamma_g\cap\mathcal T=\iota(gQ),
\]
and $W'\cap\mathcal T=\iota(eQ)$ by Theorem~\ref{thm:structure}. From $W'\subseteq\langle T_\psi\rangle$
we therefore get
\[
\iota(eQ)=W'\cap\mathcal T\ \subseteq\ \langle T_\psi\rangle\cap\mathcal T=\iota(gQ),
\]
and since $\iota$ is injective this yields $eQ\subseteq gQ$. Applying this to a root $\gamma$ gives $e\gamma\in gQ$, i.e.\ $(e/g)\gamma\in Q$ after division, which yields $g\mid e$. Together with $e\mid g$ we conclude $e=g$.

Now that $e=g$, the uniqueness statement of Theorem~\ref{thm:structure} shows that $W'$
determines the homomorphism $\gamma\mapsto(\nu'|\gamma)\bmod g$, and applying it to $\gamma_t$ gives
$(\nu'|\gamma_t)\equiv y_t\pmod g$ because $s_{\gamma_t,y_t}\in W'$. Hence $W'$ determines
$y\bmod g$, as claimed.

At this point, we are ready to show that the following statements are equivalent:
\begin{enumerate}
    \item[(I)] $F(y)\sim F(y')$
    \item[(II)] $y\equiv y'\ \mathrm{mod } ~g$
    \item[(III)] $\langle F(y)\rangle=\langle F(y') \rangle$
\end{enumerate}
(I) implies (III) by Lemma \ref{lem:subgroupinv}, (III) implies (II) by Step 2, and (II) implies (I) by Step 1 (Lemma~\ref{lem:moves}). The number of orbits is the number of residues $y\bmod g$, i.e.\ $g^{c-1}$.

\emph{The general case.} Let $w$ be arbitrary and $F,F'\in\Red_T(w)$ with $\langle F\rangle=\langle F'\rangle$. By Lemma~\ref{lem:blocksgroup} the blocks of a factorization are determined by the subgroup it generates, so $F$ and $F'$ have the same blocks $B_1,\dots,B_r$. By Lemma~\ref{lem:blocks}(ii) the subtuples $F_s,F'_s$ are reduced factorizations of the same element $w_s\in\Sa_{B_s}$, and $\langle F_s\rangle=\langle F'_s\rangle$ because both equal the subgroup of $\langle F\rangle=\langle F'\rangle$ generated by the reflections with directions inside $B_s$. By Lemma~\ref{lem:blocks}(ii) and the lower bound established in the proof of Theorem~\ref{thm:length}, $\ell_T(w)=\sum_s\ell_T(w_s)\ge N+c-2r$, while $\ell_T(w)=N+c-2q$ by Theorem~\ref{thm:length}, hence $r\ge q$, and $r\le q$ by Lemma~\ref{lem:blocks}(iii), so $r=q$. Suppose some $w_s$ admits a balanced partition with at least two parts. Refining the $s$-th block accordingly would produce a balanced partition of $w$ with more than $r=q$ parts, contradicting the maximality of $q$. Hence $w_s$ is irreducible in $\Sa_{B_s}$ and the case already treated gives $F_s\sim F'_s$ for every $s$. By Lemma~\ref{lem:blocks}(iv) we obtain $F\sim F'$.
\end{proof}

\begin{theorem}\label{thm:count}
For every $w\in\Sa_N$ the number of Hurwitz orbits on $\Red_T(w)$ is given by
\[
\sum_{\pi\in\mathcal P(w)}\ \prod_{B\in\pi}g_B^{\ \#B-1},
\]
where a part $B$ with $\#B=1$ contributes the factor $1$.
\end{theorem}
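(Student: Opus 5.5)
The plan is to sort the Hurwitz orbits according to the block partition they induce, and then count the orbits within each block partition using the irreducible case of Theorem~\ref{thm:main}. By Lemma~\ref{lem:blocksgroup} the blocks of a factorization are constant on Hurwitz orbits. So $\Red_T(w)$ splits into a disjoint union of Hurwitz-stable subsets $R_\pi$, where $R_\pi$ consists of the reduced factorizations whose blocks induce the partition $\pi$ of the cycles. The number of orbits is therefore $\sum_\pi \#(R_\pi/\mathcal B_m)$.

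First I would determine which $\pi$ occur, by showing that $R_\pi\neq\emptyset$ if and only if $\pi\in\mathcal P(w)$. For the forward direction, take $F\in R_\pi$. By Lemma~\ref{lem:blocks}(iii), $\pi$ is balanced with $r$ parts, and the length argument already used in the general case of Theorem~\ref{thm:main} gives $N+c-2q=\ell_T(w)\ge N+c-2r$. Hence $r\ge q$, so $r=q$ and $\pi\in\mathcal P(w)$. For the converse, fix $\pi\in\mathcal P(w)$ and apply the upper-bound construction from the proof of Theorem~\ref{thm:length}. It produces a factorization of length $N+c-2q$, hence a reduced one, whose direction graph restricted to each part $B$ is connected: each cycle is spanned by a tree and the connectors join the cycles of $B$. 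The blocks of this factorization are therefore exactly the parts of $\pi$.

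Next I would count orbits inside a fixed $R_\pi$ with $\pi\in\mathcal P(w)$. For each part $B$, write $B$ also for the union of its cycles, and let $w_B=t_{\mu^B}u|_B\in\Sa_B$. For any $F\in R_\pi$, grouping the entries by blocks gives $F_B$ with product $w_B$, because the translation part of the block product is supported on $B$ and the block products multiply to $w$. By Lemma~\ref{lem:blocks}(ii), $F_B\in\Red_T(w_B)$ and its direction graph is connected. As in the general case of Theorem~\ref{thm:main}, each $w_B$ is irreducible in $\Sa_B$, since otherwise refining $\pi$ would contradict maximality. The map $R_\pi\to\prod_B\Red_{T\cap\Sa_B}(w_B)$, up to interleaving, is onto, because any tuple of reduced factorizations $(F_B)_B$ concatenates to a reduced factorization of $w$ with blocks $\pi$. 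Here irreducibility guarantees that each $F_B$ has connected direction graph, and the lengths add up to $N+c-2q$. By Lemma~\ref{lem:blocks}(iv), the orbits in $R_\pi$ are in bijection with tuples of orbits, so their number is $\prod_B\#\big(\Red_{T\cap\Sa_B}(w_B)/\mathcal B\big)$.

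Finally, I would apply Theorem~\ref{thm:main} to each irreducible $w_B$. Its linear part $u|_B$ has $\#B$ cycles, and its winding numbers are the $\omega_t$ with $C_t\in B$, so $\gcd$ equals $g_B$ and there are exactly $g_B^{\#B-1}$ orbits. When $\#B=1$ this gives $1$, consistent with the convention, since $c=1$ in that subcase. Summing over $\pi\in\mathcal P(w)$ yields the formula.

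The main obstacle is the bookkeeping in the middle step. One must make sure that Theorem~\ref{thm:main}, which is stated for $\Sa_N$, transfers verbatim to $\Sa_B$, identifying $\Sa_B\cong\Sa_{|B|}$ by relabeling coordinates, under which winding numbers and reflections correspond. One must also check that Lemma~\ref{lem:blocks}(iv) really gives a bijection of orbit sets: two factorizations in $R_\pi$ are equivalent exactly when their blockwise subtuples are, and every combination of blockwise orbits is realized.
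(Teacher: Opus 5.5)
Your proposal is correct and follows essentially the same route as the paper. You split $\Red_T(w)$ by block partition, which is constant on orbits by Lemma~\ref{lem:blocksgroup}, and show via the length bound and the upper-bound construction that the occurring partitions are exactly those in $\mathcal P(w)$; you then apply Lemma~\ref{lem:blocks}(iv) together with the irreducible case of Theorem~\ref{thm:main} to each part. Your explicit check that every tuple of blockwise reduced factorizations concatenates to a reduced factorization of $w$ with block partition $\pi$ (using irreducibility of each $w_B$ and additivity of lengths) supplies a surjectivity point the paper leaves implicit in Lemma~\ref{lem:blocks}(iv).
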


\begin{proof}
By Lemma~\ref{lem:blocks}(iii) the block partition of a reduced factorization is balanced with $r$ parts, and by the proof of Theorem~\ref{thm:length} the equality $\ell_T(w)=N+c-2q$ forces $r=q$. Hence the block partition lies in $\mathcal P(w)$. Conversely, the construction in the upper-bound part of that proof realises every $\pi\in\mathcal P(w)$ as the block partition of some reduced factorization. Two factorizations with different block partitions generate different subgroups, hence lie in different orbits.

Fix $\pi\in\mathcal P(w)$. By Lemma~\ref{lem:blocks}(iv) the orbits with block partition $\pi$ correspond to tuples of orbits, one for each part $B$, of the reduced factorizations of $w_B$ inside $\Sa_B$. Each $w_B$ is irreducible, so Theorem~\ref{thm:main} gives $g_B^{\#B-1}$ orbits for the part $B$ (note that the number of cycles of $\lin(w_B)$ is $\#B$). Multiplying over the parts and summing over $\pi$ gives the formula.
\end{proof}

\begin{corollary}\label{cor:criterion}
Let $w\in\Sa_N$. The Hurwitz action on $\Red_T(w)$ is transitive if and only if
\begin{enumerate}[label=\textup{(\alph*)}]
\item there is exactly one maximal balanced partition, i.e.\ $\#\mathcal P(w)=1$, and
\item $g_B=1$ for every part $B$ of that partition with $\#B\ge2$.
\end{enumerate}
\end{corollary}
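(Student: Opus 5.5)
This is a direct consequence of Theorem~\ref{thm:count}. I would simply analyse when the orbit count
\[
\sum_{\pi\in\mathcal P(w)}\ \prod_{B\in\pi}g_B^{\,\#B-1}
\]
equals $1$. Transitivity means there is exactly one orbit. Note that $\Red_T(w)$ is nonempty, so there is at least one orbit.

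First I would check that every summand is a positive integer. By Lemma~\ref{lem:nonzero}, for $\pi\in\mathcal P(w)$ and any part $B$ with $\#B\ge2$, all winding numbers in $B$ are nonzero, so $g_B\ge1$. Parts with $\#B=1$ contribute the factor $1$ by convention; their $g_B$ may be $0$, but the exponent is $0$. Hence each product $\prod_{B\in\pi}g_B^{\#B-1}$ is an integer $\ge1$, and the sum is at least $\#\mathcal P(w)\ge1$, since $\mathcal P(w)\neq\emptyset$.

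For the direction ``(a) and (b) imply transitive'': (a) reduces the sum to a single product. By (b) every factor with $\#B\ge2$ equals $1^{\#B-1}=1$, and the remaining factors are $1$ by convention. So the count is $1$.

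For the converse, assume the count is $1$. Since every summand is $\ge1$, there is only one summand, which gives (a). That single product equals $1$ and consists of positive integer factors, so each factor is $1$. For a part $B$ with $\#B\ge2$, the exponent $\#B-1$ is at least $1$, so $g_B^{\#B-1}=1$ forces $g_B=1$, which is (b).

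There is no real obstacle in this argument. The only point needing care is positivity of the factors, which is exactly Lemma~\ref{lem:nonzero} together with the convention for singleton parts.
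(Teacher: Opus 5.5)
Your proposal is correct and follows the paper's proof essentially verbatim: both read the criterion off the orbit count of Theorem~\ref{thm:count}, using Lemma~\ref{lem:nonzero} to see that every factor $g_B^{\#B-1}$ is a positive integer, so the sum is $1$ exactly when there is a single summand all of whose factors equal $1$. Your remark about singleton parts is a small clarification that the paper leaves implicit in its convention. In fact $g_B=0$ always holds there, since balancedness of a part $\{C_t\}$ forces $\omega_t=0$.
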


\begin{proof}
By Theorem~\ref{thm:count} the number of orbits is a sum of $\#\mathcal P(w)$ summands, each a product of positive integers $g_B^{\#B-1}\ge1$ by Lemma~\ref{lem:nonzero}. Hence this sum equals $1$ if and only if there is exactly one summand and every factor in it equals $1$. Therefore we need $g_B^{\#B-1}=1$ which holds precisely when $\#B=1$ or $g_B=1$.
\end{proof}

\begin{remark}
Note that results similar/analogous to Theorems \ref{thm:main} and \ref{thm:count} as well as Corollary \ref{cor:criterion} have been obtained by Lewis and Wang for complex reflection groups. See \cite[Theorem~3.2, Corollary~3.4, Theorem~4.1]{LW22}.
\end{remark}

\end{document}